\newfont{\msam}{msam10}
\newtheorem{theorem}[]{Theorem}
\newtheorem{proposition}[]{Proposition}
\newtheorem{corollary}[]{Corollary}
\newtheorem{lemma}[]{Lemma}
\theoremstyle{definition}
\newtheorem{definition}[]{Definition}
\newtheorem{remark}[]{Remark}
\newtheorem{defn}[theorem]{Definition}
\newtheorem{conj}[]{Conjecture}
\let\nc\newcommand
\def\bthm{\begin{theorem}}
\def\ethm{\end{theorem}}
\def\blemma{\begin{lemma}}
\def\elemma{\end{lemma}}
\def\bproof{\begin{proof}}
\def\eproof{\end{proof}}
\def\bprop{\begin{proposition}}
\def\eprop{\end{proposition}}
\def\bcor{\begin{corollary}}
\def\ecor{\end{corollary}}
\def\bconj{\begin{conj}}
\def\econj{\end{conj}}
\nc{\la}{\label}
\def\F{\mathcal{F}}
\def\Z{\mathbb{Z}}
\def\N{\mathbb{N}}
\def\M{\mathcal{M}}
\def\L {\boldsymbol{L}}
\def\Top{\mathrm{Top}}
\def\Set{\mathrm{Set}}
\def\sSet{\mathrm{sSet}}
\def\Cat{\mathrm{Cat}}
\def\Ch{\mathrm{Ch}}
\def\Mod{\mathrm{Mod}}
\def\sGr{\mathrm{sGr}}
\def\Gr{\mathrm{Gr}}
\def\scAlg{\mathrm{sComm}}
\def\D{\mathcal{D}}
\def\C{\mathcal{C}}
\def\M{\mathcal{M}}
\def\cN{\mathcal{N}}
\def\cB{\mathcal{B}}
\def\Ho{{\mathtt{Ho}}}
\nc{\CS}{{\tt{CS}}}
\nc{\CR}{{\tt{CR}}}
\nc{\SR}{{\tt{SR}}}
\nc{\ocolim}{{\rm ocolim}}
\nc{\Ob}{{\rm Ob}}
\nc{\Hom}{{\rm{Hom}}}
\nc{\Homcont}{{\mathcal{H}om}}
\nc{\HOM}{\underline{\rm{Hom}}}
\nc{\DER}{\underline{\rm{Der}}}
\nc{\END}{\underline{\rm{End}}}
\nc{\bSym}{\mathbf{Sym}}
\nc{\Ext}{{\rm{Ext}}}
\nc{\Map}{{\rm{Map}}}
\nc{\Rep}{{\rm{Rep}}}
\nc{\DRep}{{\rm{DRep}}}
\nc{\ODRep}{{\mathcal O}{\rm{DRep}}}
\nc{\NCRep}{\widetilde{\rm{Rep}}}
\nc{\RAct}{{\rm{RAct}}}
\nc{\bs}{\backslash}
\nc{\ob}{{\tt{Obs}}}
\nc{\CE}{\mathcal{C}}
\nc{\TP}{{T\!P}}
\nc{\un}{\underline{n}}
\nc{\um}{\underline{m}}
\nc{\rn}{\langle n \rangle}
\nc{\nn}{{{\natural} {\natural}}}
\nc{\n}{{{\natural}}}
\nc{\A}{\mathbb A}
\nc{\B}{{\mathrm{B}}}
\nc{\Ba}{\overline{\mathrm{B}}}
\nc{\bC}{\overline{C}}
\nc{\bOmega}{\boldsymbol{\Omega}}
\nc{\bB}{\boldsymbol{B}}
\nc{\EXT}{\underline{\rm{Ext}}}
\nc{\TOR}{\underline{\rm{Tor}}}
\nc{\hocolim}{\mathrm{hocolim}}
\def\H{\mathrm H}
\def\HC{\mathrm{HC}}
\def\HS{\mathrm{HS}}
\def\HB{\mathrm{HB}}
\def\HO{\mathrm{HO}}
\def\HR{\mathrm{HR}}
\def\rHC{\overline{\mathrm{HC}}}
\def\F{\mathcal F}
\nc{\End}{{\rm{End}}}
\nc{\GL}{{\rm{GL}}}
\nc{\gl}{{\mathfrak{gl}}}
\nc{\rgl}{\overline{{\mathfrak{gl}}}}
\nc{\g}{{\mathfrak{g}}}
\nc{\h}{{\mathfrak{h}}}
\nc{\PGL}{{\rm{PGL}}}
\nc{\SL}{{\rm{SL}}}
\nc{\sll}{\mathfrak{sl}}
\nc{\cn}{ \mbox{\rm c\^{o}ne} }
\nc{\PSL}{{\rm{PSL}}}
\nc{\ad}{{\rm{ad}}}
\nc{\Ad}{{\rm{Ad}}}
\nc{\dlim}{\varinjlim}
\nc{\plim}{\varprojlim}
\nc{\colim}{{{\rm colim}}}
\newcommand{\HH}{{\rm{HH}}}
\newcommand{\Tor}{{\rm{Tor}}}
\newcommand{\Spec}{{\rm{Spec}}}
\newcommand{\Sym}{\Lambda}
\newcommand{\Aut}{{\rm{Aut}}}
\newcommand{\id}{{\rm{Id}}}
\newcommand{\Tr}{{\rm{Tr}}}
\newcommand{\into}{\,\hookrightarrow\,}
\newcommand{\sto}{\xrightarrow{\sim}}
\def\bs{\backslash}
\def\ffgr{\mathfrak{G}}
\def\sset{\mathtt{sSet}}
\def\lgr{\mathbb{G}}
\def\cL{\mathcal{L}}
\newcommand{\rar}{\xrightarrow{}}
\nc{\env}{\mathrm{End}(V)}
\nc{\cC}{\mathcal{C}}
\numberwithin{equation}{section}
\numberwithin{theorem}{section}
\numberwithin{lemma}{section}
\numberwithin{proposition}{section}
\numberwithin{definition}{section}
\numberwithin{corollary}{section}
\numberwithin{example}{section}
\numberwithin{remark}{section}
\def\cC{\mathcal{C}}
\newcommand{\rH}{\overline{\mathrm{H}}}
\def\cO{\mathcal O}
\def\G{\mathbb{G}}
\def\fM{\mathfrak{M}}
\def\cP{\mathcal{P}}
\newcommand{\Lie}{\mathrm{Lie}}
\def\bdf{\begin{defn}}
\def\edf{\end{defn}}
\def\brm{\begin{remark}}
\def\erm{\end{remark}}
\theoremstyle{definition}
\def\bdf{\begin{definition}}
\def\edf{\end{definition}}
\def\cB{\mathcal B}
\newcommand{\SP}{\mathrm{SP}}
\newcommand{\bS}{{\mathbb S}}
\newcommand{\bF}{{\mathbb F}}
\newcommand{\cA}{{\mathcal A}}
\newcommand{\cF}{{\mathcal F}}
\newcommand{\cH}{{\mathcal H}}
\newcommand{\uC}{\underline{\mathcal C}}
\newcommand*{\Nwarrow}{\rotatebox[origin=c]{45}{\(\Leftarrow\)}}
\DeclareMathOperator*{\Moplus}{\text{\raisebox{0.25ex}{\scalebox{0.75}{$\bigoplus$}}}}
\DeclareMathOperator*{\Mvee}{\text{\raisebox{0.25ex}{\scalebox{0.8}{$\bigvee$}}}}
\def\arbreBA{\vcenter{\xymatrix@R=2pt@C=2pt{
&&&&\\
&&&*{}\ar@{-}[ul] & \\
&&*{}\ar@{-}[uurr] \ar@{-}[uull] \ar@{-}[d]     &&\\
&&&&
}}}
\def\arbreAB{\vcenter{\xymatrix@R=2pt@C=2pt{
&&&&\\
&*{}\ar@{-}[ur] &&& \\
&&*{}\ar@{-}[uurr] \ar@{-}[uull] \ar@{-}[d]     &&\\
&&&&
}}}
\def\arbreABC{\vcenter{\xymatrix@R=1pt@C=1pt{
&&&&&&\\
&*{}\ar@{-}[ur] &&&&& \\
&&*{}\ar@{-}[uurr] &&&&\\
&&&*{}\ar@{-}[uuurrr] \ar@{-}[uuulll] \ar@{-}[d] &&&\\
&&&&&&
}}}
\def\arbreBAC{\vcenter{\xymatrix@R=1pt@C=1pt{
&&&&&&\\
&&&*{}\ar@{-}[ul] &&& \\
&&*{}\ar@{-}[uurr] &&&&\\
&&&*{}\ar@{-}[uuurrr] \ar@{-}[uuulll] \ar@{-}[d] &&&\\
&&&&&&
}}}
\def\arbreACB{\vcenter{\xymatrix@R=1pt@C=1pt{
&&&&&&\\
&*{}\ar@{-}[ur] &&&&& \\
&&&&*{}\ar@{-}[uull] &&\\
&&&*{}\ar@{-}[uuurrr] \ar@{-}[uuulll] \ar@{-}[d] &&&\\
&&&&&&
}}}
\def\arbreBCA{\vcenter{\xymatrix@R=1pt@C=1pt{
&&&&&&\\
&&&&&*{}\ar@{-}[ul] & \\
&&*{}\ar@{-}[uurr] &&&&\\
&&&*{}\ar@{-}[uuurrr] \ar@{-}[uuulll] \ar@{-}[d] &&&\\
&&&&&&
}}}
\def\arbreCAB{\vcenter{\xymatrix@R=1pt@C=1pt{
&&&&&&\\
&&&*{}\ar@{-}[ur] &&& \\
&&&&*{}\ar@{-}[uull] &&\\
&&&*{}\ar@{-}[uuurrr] \ar@{-}[uuulll] \ar@{-}[d] &&&\\
&&&&&&
}}}
\def\arbreCBA{\vcenter{\xymatrix@R=1pt@C=1pt{
&&&&&&\\
&&&&&*{}\ar@{-}[ul] & \\
&&&&*{}\ar@{-}[uull] &&\\
&&&*{}\ar@{-}[uuurrr] \ar@{-}[uuulll] \ar@{-}[d] &&&\\
&&&&&&
}}}
\def\arbreACA{\vcenter{\xymatrix@R=1pt@C=1pt{
&&&&&&\\
&*{}\ar@{-}[ur] &&&&*{}\ar@{-}[ul] & \\
&&&&&&\\
&&&*{}\ar@{-}[uuurrr] \ar@{-}[uuulll] \ar@{-}[d] &&&\\
&&&&&&
}}}
\begin{document}

\title{Derived Character Maps of Group Representations}
%
\author{Yuri Berest}
\address{Department of Mathematics,
Cornell University, Ithaca, NY 14853-4201, USA}
\email{berest@math.cornell.edu}
\author{Ajay C. Ramadoss}
\address{Department of Mathematics,
Indiana University,
Bloomington, IN 47405, USA}
\email{ajcramad@indiana.edu}
\begin{abstract}
In this paper, we construct and study derived character maps of finite-dimensional representations of $\infty$-groups. As models for $\infty$-groups we take  homotopy simplicial groups, i.e. homotopy simplicial ${\ffgr}^{\rm op}$-algebras over the algebraic theory of
groups (in the sense of \cite{Ba02}). We define cyclic, symmetric and representation homology for `group algebras' $k[\Gamma]$ over such groups and construct canonical trace maps  relating these homology theories. In the case of one-dimensional representations, we show that our trace maps are of topological origin: they are
induced by natural maps of (iterated) loop spaces that are well studied in homotopy theory. Using this topological interpretation, we deduce some algebraic results about representation homology: in particular, we prove that the symmetric homology  of group algebras and one-dimensional representation homology are naturally isomorphic, provided the base ring $k$ is a field of characteristic zero.
We also study the behavior of the derived character maps
of $n$-dimensional representations in the stable limit as $n\to \infty$, in which case we show that they `converge' to become isomorphisms.
\end{abstract}
\maketitle
\section{Introduction}
If $ \Gamma $ is a finite group and $k$ is a field of characteristic zero, every finite-dimensional $k$-linear representation $ \varrho: \Gamma \to \GL_n(k) $  is semi-simple and determined (up to equivalence) by its character: the trace function $ \Tr_n: \langle g \rangle \mapsto \Tr_n[\varrho(g)] \,$ defined on the set $\langle\Gamma\rangle$  of conjugacy classes of elements of $ \Gamma $. Moreover, for each  $ n \ge 0 $, there are only finitely many equivalence classes of such representations. These well familiar facts from representation theory of finite groups generalize to arbitrary groups by means of algebraic geometry. For any discrete group $ \Gamma $, the set of all $n$-dimensional representations of $ \Gamma $ can be naturally given the structure of an affine algebraic variety (more precisely, an affine $k$-scheme) $ \Rep_n(\Gamma) $ called the {\it  representation variety} of $\Gamma $. 
The equivalence classes of $n$-dimensional representations of $\Gamma$ are classified by the orbits of the general linear group $ \GL_n $ that acts algebraically on $ \Rep_n(\Gamma)$ by conjugation. The classes of semi-simple representations correspond to the closed orbits\footnote{At least when $\Gamma$ is finitely generated.} and are parametrized by the affine quotient scheme
\begin{equation*}
\Rep_n(\Gamma)/\!/\GL_n(k) := \Spec\,\cO[\Rep_n(\Gamma)]^{\GL_n} \end{equation*}
called the {\it character variety} of $\Gamma$.
Now, the characters of  representations assemble into a linear map
\begin{equation}
\la{Tr0}
\Tr_n(\Gamma):\ k\langle \Gamma \rangle \to \cO[\Rep_n(\Gamma)]^{\GL_n}
\end{equation}
which is defined on the $k$-vector space $k\langle \Gamma \rangle$ spanned by the conjugacy classes of elements of $ \Gamma $. A well-known theorem of
C. Procesi \cite{Pr} asserts that the characters of $\Gamma $, i.e. the images of the map \eqref{Tr0}, generate $ \cO[\Rep_n(\Gamma)]^{\GL_n} $ as a commutative $k$-algebra, and thus, by Nullstellensatz, detect the semi-simple representations of $ \Gamma $ when $k$ is algebraically closed. In general, the equivariant geometry of $ \Rep_n(\Gamma) $ is closely related to representation theory of $\Gamma$, the geometric structure of $\GL_n$-orbits in $ \Rep_n(\Gamma) $ determining the algebraic structure of representations. Since the late 1980s this relation has been extensively studied  and exploited in many areas of mathematics, most notably in geometric group theory and low-dimensional topology (see, for example, \cite{LM85, Sik}).

Derived algebraic geometry allows one to extend --- and in some sense to complete --- this beautiful connection between representation theory and geometry. The classical representation scheme $ \Rep_n(\Gamma) $ admits a natural derived extension $ \DRep_n(\Gamma) $ called the {\it derived representation scheme}\footnote{The first construction of this kind --- the derived moduli space
$ \boldsymbol{{\mathrm R}}{\rm Loc}_G(X)$ of $G$-local systems over a pointed connected space $X$ --- was introduced by Kapranov \cite{K01}. In recent years, several other constructions have been developed in various frameworks of derived algebraic geometry (most notably, 
in the work of To\"en, Vezzosi, Pridham and Pantev (see, e.g., \cite{TV08, Prid1, Prid2, Prid3, PTVV, TP21}). A comparison of  these constructions can be found in  \cite[Appendix]{BRYIII}. The relation to our present work is briefly discussed in the end of Section~\ref{S3.2}.} 
of $ \Gamma $. In general, $\, \DRep_n(\Gamma) $ is represented by a simplicial commutative $k$-algebra $ \cO[\DRep_n(\Gamma)]$ whose homotopy groups $\,\pi_{i} \cO[\DRep_n(\Gamma)]$, $\,i \ge 0$,  are (non-abelian) homological invariants of $ \Gamma $ (or rather its classifying space $ B \Gamma $). 
Following \cite{BRYI, BRYII}, we set
\begin{equation}
\la{HR}
\HR_\ast(\Gamma,\GL_n(k)) := \pi_{\ast} \cO[\DRep_n(\Gamma)]
\end{equation}
and call \eqref{HR} the {\it $n$-dimensional representation homology} of  $\Gamma $.  
By definition, $\,\HR_\ast(\Gamma,\GL_n(k))\,$ is a graded commutative $k$-algebra, whose degree $0$ part is canonically isomorphic to the coordinate ring of  $\Rep_n(\Gamma)$:
\begin{equation}
\la{HR0}
\HR_0(\Gamma,\GL_n(k)) \cong \cO[\Rep_n(\Gamma)]\ .
\end{equation}

Apart from groups, representation homology can be also defined for various types of algebras (e.g., associative and Lie algebras, see \cite{BKR,BR,BFPRW, BFPRW2}) as well as for topological spaces (see \cite{BRYI, BRYII, BRYIII}). What is surprising perhaps is that, in the case of groups and spaces, there is a simple construction of representation homology that does not require the use of derived algebraic geometry nor a heavy machinery of homotopical algebra. This construction (discovered in \cite{BRYI}) plays a key role in the present paper: we will use it to {\it define} representation homology for {\it homotopy simplicial groups}, which are simple models for
$\infty$-groups, a more general and (from the homotopy-theoretic point of view)  more natural concept than that of a discrete or simplicial group (see \cite{Ba02}).

Now, returning to the character map \eqref{Tr0}, we observe that its domain --- the $k$-span $k \langle \Gamma \rangle $ of the conjugacy classes of elements of $\Gamma $ --- is isomorphic to the $0$-th cyclic homology of the group algebra $k[\Gamma]$:
\begin{equation}
\la{HC0}
\HC_0(k[\Gamma]) \cong k \langle \Gamma \rangle
\end{equation}
With isomorphisms \eqref{HR0} and \eqref{HC0}, we can therefore rewrite \eqref{Tr0} in the form
\begin{equation}
\la{Tr00}
\Tr_n(\Gamma):\ \HC_0(k[\Gamma])\, \to \, \HR_0(\Gamma,\GL_n(k))^{\GL_n}
\end{equation}
This suggests that there might exist a natural extension of the map \eqref{Tr0} to higher cyclic homology  with values in higher representation homology of $\Gamma$:
\begin{equation}
\la{Tr*}
\Tr_n(\Gamma)_\ast:\ \HC_\ast(k[\Gamma])\, \to \, \HR_\ast(\Gamma,\GL_n(k))^{\GL_n}
\end{equation}
We call \eqref{Tr*} the {\it derived character maps of $n$-dimensional representations of\, $\Gamma$}, and our first goal in this paper is to define and study these maps for an arbitrary homotopy simplicial group $\Gamma$.

In the case of associative algebras, the derived character maps were originally constructed 
in \cite{BKR}, using non-abelian homological algebra. This construction was extended to Lie algebras in \cite{BFPRW},
where it was shown --- among other things --- that derived character maps of Lie algebra representations are Koszul dual to the classical Loday-Quillen-Tsygan maps \cite{LQ84, T83}.
The case of groups that we study in this paper is special for several reasons. 
First, as mentioned above, in this case the representation homology  admits a simple construction that is similar to Connes' well-known construction of cyclic homology in terms of 
Tor-functors on the cyclic category. We will show that behind this `similarity' there is actually a connection: a simple formula for the derived character maps \eqref{Tr*} relating Tor-functors via classical homological algebra (see Section~\ref{S3.4} and, in particular, Definition~\ref{Gchar}).

Second, the cyclic homology of group algebras has a beautiful topological interpretation
that goes back to the work of Goodwillie, Burghelea,  Fiedorowicz and others
(see \cite[Chap. 7]{L}). Specifically, there is a natural isomorphism
\begin{equation}
\la{HCiso}
\HC_*(k[\Gamma])\,\cong\, \H_*(ES^1 \times_{S^1}\! \cL(B\Gamma);\, k)\, ,
\end{equation}
where the right-hand side is the $ S^1$-equivariant homology of the free loop space 
$ \cL(B\Gamma) := \Map(S^1, B\Gamma) $ of the classifying space of $\Gamma$. In fact,
\eqref{HCiso} is just one on the list of several classical isomorphisms 
relating algebraic homology theories associated with so-called crossed simplicial groups \cite{LF} to  (stable) homotopy theory:
\begin{eqnarray}
\la{HHH}
\HH_*(k[\Gamma]) & \cong & \H_*(\cL(B\Gamma); \,k)\,, \nonumber\\*[1ex]
\HC_*(k[\Gamma]) & \cong & \H_*(ES^1 \times_{S^1}\! \cL(B\Gamma);\, k)\,, \nonumber\\*[1ex]
\HS_*(k[\Gamma]) & \cong & \H_*(\Omega\, \Omega^{\infty} \Sigma^{\infty}\!(B\Gamma);\, k) \,,\\*[1ex]
\HB_*(k[\Gamma]) & \cong & \H_*(\Omega^{2} \Sigma (B\Gamma);\, k) \,,\nonumber\\*[1ex]
\HO_*(k[\Gamma]) & \cong & \H_*(E(\Z/2)_+ \wedge_{\Z/2} \Omega\, \Omega^{\infty} \Sigma^{\infty}(B\Gamma);\, k)\,, \nonumber
\end{eqnarray}
where $\Omega$, $ \Sigma $ and $ \Omega^{\infty} \Sigma^{\infty} $ denote the based loop, the (reduced) suspension, and the stable homotopy functors, respectively.
The first two of the above isomorphisms (for Hochschild and cyclic homology) are well known:
they were originally established in \cite{Go} and \cite{BF}, and their proofs appear in Loday's textbook \cite{L} (see also \cite{L2} for a nice self-contained exposition). The last three (for the symmetric $\HS_*$, braided $\HB_*$ and hyperoctahedral $\HO_*$ homologies) are less known: they were discovered by Fiedorowicz \cite{F} in the early 1990s, but detailed proofs were published only recently (see \cite{Au1} and \cite{Gr}).

The second (and perhaps, the main) goal of this paper is to extend the above list of isomorphisms by adding to it representation homology. To be precise, for any commutative ring $k$, let $\HR_\ast(k[\Gamma]) :=  \HR_{\ast}(\Gamma, \G_m(k)) $ denote the one-dimensional representation homology of $ \Gamma $. We prove (see Theorem~\ref{omspinf} and Lemma~\ref{HXGa}):
\begin{theorem}
\la{T1}
For any homotopy simplicial group $ \Gamma $, there is a natural isomorphism
\begin{equation}
\la{HR1}
\HR_\ast(k[\Gamma])\,\cong\, \H_\ast(\Omega\,\SP^{\infty}\! (B\Gamma);\,k)
\end{equation}
where $\,\SP^\infty\! (B\Gamma)\,$ denotes the classical Dold-Thom space of the classifying 
space of $ \Gamma\,$.
\end{theorem}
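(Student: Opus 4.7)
The plan is to prove Theorem~\ref{T1} by reducing $\HR_*(k[\Gamma])$ to the $k$-homology of a simplicial abelian group that models $\Omega\,\SP^{\infty}(B\Gamma)$. I would proceed in three steps.

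First, since $\G_m$ is abelian, one-dimensional representations of $\Gamma$ factor through its abelianization: $\Hom_{\Gr}(\Gamma,\,\G_m(R))\,=\,\Hom(\Gamma^{\ab},\,R^\times)$ for any commutative $k$-algebra $R$. Classically this yields $\cO[\Rep_1(\Gamma)]\,=\,k[\Gamma^{\ab}]$, the group $k$-algebra of the abelianization. Passing to the derived setting, one takes a cofibrant replacement $\tilde{\Gamma}\to\Gamma$ in the model category of homotopy simplicial groups (such cofibrant models exist by Badzioch's theorem) and forms the simplicial abelian group $\tilde{\Gamma}^{\ab}$ by abelianizing in each simplicial degree. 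The representation homology is then identified as
$$\HR_*(k[\Gamma])\,\cong\,\pi_*\bigl(k[\tilde{\Gamma}^{\ab}]\bigr),$$
which is essentially the content of Lemma~\ref{HXGa}.

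Next, I would invoke the classical relationship between derived abelianization of a simplicial group and the loop-Dold--Thom construction. The homotopy groups of $\tilde{\Gamma}^{\ab}$ are $\pi_n(\tilde{\Gamma}^{\ab})\,\cong\,H_n(\Gamma;\Z)\,\cong\,H_{n+1}(B\Gamma;\Z)$, while by Dold--Thom $\pi_n(\Omega\,\SP^{\infty}(B\Gamma))\,\cong\,H_{n+1}(B\Gamma;\Z)$. Since both spaces are (equivalent to) products of Eilenberg--MacLane spaces --- being simplicial abelian groups --- any natural map matching these homotopy groups is a weak equivalence. A concrete natural equivalence can be constructed by passing through the reduced free simplicial abelian group $\tilde{\Z}[B\Gamma]$, which is the standard combinatorial model for $\SP^{\infty}(B\Gamma)$, and comparing it with the abelianization of Kan's loop group.

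Finally, I would use the fact that for any simplicial abelian group $A$, the simplicial commutative $k$-algebra $k[A]$ (via its Hopf structure) has homotopy
$$\pi_*(k[A])\,\cong\,H_*(|A|;\,k)$$
with the Pontryagin product on the right, which follows because the underlying simplicial $k$-module of $k[A]$ is the free $k$-linearization of $A$ and computes singular homology with $k$-coefficients. Combining the three steps:
$$\HR_*(k[\Gamma])\,\cong\,\pi_*\bigl(k[\tilde{\Gamma}^{\ab}]\bigr)\,\cong\,H_*(|\tilde{\Gamma}^{\ab}|;\,k)\,\cong\,H_*(\Omega\,\SP^{\infty}(B\Gamma);\,k),$$
with naturality in $\Gamma$ following from the functoriality of each step. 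The hardest part is Step 2: producing an explicit natural chain of equivalences --- rather than merely an isomorphism of homotopy groups --- between the derived abelianization of a homotopy simplicial group and $\Omega\,\SP^{\infty}(B\Gamma)$, which requires verifying that Badzioch's strictification is compatible with abelianization so that the classical Kan-loop-group picture transfers cleanly to the homotopy-simplicial setting used throughout the paper.
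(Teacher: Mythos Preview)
Your approach is correct and rests on the same core identification as the paper: the derived abelianization $\L(\L K(\Gamma))_{\rm ab}$ of the (rigidified) simplicial group models $\Omega\,\SP^\infty(B\Gamma)$ via the Kan loop group picture $\Omega\,\SP^\infty(X)\simeq |\mathbb{A}(X)|=|\lgr(X)_{\rm ab}|$, and taking $k$-homology then yields the result.

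The packaging differs in a way worth noting. The paper does not argue directly as you do; instead it introduces an auxiliary space $X_\Gamma:=|\hocolim_{\ffgr_\Z^{\rm op}}(p^*\Gamma)|$ built from the category of elements $\ffgr_\Z$ of the abelianization functor $\underline{\Z}:\ffgr\to\Set$. Lemma~\ref{HXGa} identifies $H_*(X_\Gamma;k)\cong\HR_*(k[\Gamma])$ via the classical Shapiro Lemma for Tor, and Theorem~\ref{omspinf} shows $X_\Gamma\simeq\Omega\,\SP^\infty(B\Gamma)$ by first invoking the model-category Shapiro Lemma (Proposition~\ref{ShLemma}) to rewrite the hocolim as the homotopy coend $\int_\L^{\langle n\rangle}\Gamma\langle n\rangle\times\Z^n$, and then identifying that coend with $\L(\L K(\Gamma))_{\rm ab}$. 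Your route bypasses $X_\Gamma$ and Proposition~\ref{ShLemma} entirely. For Theorem~\ref{T1} in isolation your argument is more economical; the paper's detour through $X_\Gamma$ pays off later, since the topological character maps of Theorem~\ref{CRMap} are constructed as maps of hocolims landing in $X_\Gamma$, so that space is needed as an explicit target.

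One small correction: your Step~1 identification $\HR_*(k[\Gamma])\cong\pi_*(k[\tilde\Gamma^{\rm ab}])$ is \emph{not} the content of Lemma~\ref{HXGa}, which instead computes $H_*(X_\Gamma;k)$ via Shapiro. What you are using is rather the equivalence between the Tor definition of representation homology and the derived representation scheme (see~\eqref{s3e15} and the discussion around it), together with the observation that $\cO[\Rep_1(\bF_n)]\cong k[\Z^n]$ degreewise. This is correct, but cite it accurately.
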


Apart from the Hochschild and cyclic theories, most interesting on the list \eqref{HHH} is the {\it symmetric homology} theory $ \HS_\ast $ introduced in \cite{F} and studied in \cite{Au1, Au2}.
Roughly speaking, $ \HS_\ast $  is defined\footnote{See Sections~\ref{S3.3} and \ref{S4.2} for precise definitions of $ \HC_\ast(k[\Gamma])$ and $ \HS_\ast(k[\Gamma]) $ in the context of homotopy simplicial groups.} in the same way as $ \HC_\ast $, with 
Connes' cyclic category $\Delta C$ replaced by the symmetric category $\Delta S$, where the 
family of the symmetric groups $ \{S^{\rm op}_{n+1}\}_{n \ge 0} $ is used instead of
the cyclic groups $ \{C_{n+1}\}_{n \ge 0} $. Now, the natural inclusions of groups $ C_{n+1} \into S_{n+1} $ extend to a functor  $\iota: \Delta C^{\rm op} \into \Delta S$, which, in turn, induces a natural map $\, \HC_*(k[\Gamma]) \to \HS_*(k[\Gamma]) \,$. It turns out that, with identifications \eqref{HHH}, this last map is induced (on homology) by a map of topological spaces
\begin{equation}
\la{CSmap}
\CS_{B \Gamma}:\ ES^1 \times_{S^1}\! \cL(B \Gamma)\, \to \, \Omega\, \Omega^{\infty} \Sigma^{\infty}\!(B \Gamma)
\end{equation}
The map \eqref{CSmap} is actually defined as a natural transformation $ \CS_{X} $ on the (homotopy) category of all pointed spaces; it was originally constructed in the paper \cite{CC}, and its relation to symmetric homology was noticed in \cite{F}. We will refer to \eqref{CSmap} as the {\it Carlsson-Cohen map} for $ B\Gamma$. 

We can now state our second observation that provides a topological interpretation of the derived character maps \eqref{Tr*} for one-dimensional representations. To shorten notation we will write the maps \eqref{Tr*} for $n=1$ as
\begin{equation}
\la{Tr1}
\Tr(\Gamma)_{\ast}:\ \HC_\ast(k[\Gamma])\, \to \, \HR_\ast(k[\Gamma])
\end{equation}
The next theorem encapsulates the main results of Section~\ref{S4.3} (see Proposition~\ref{ftsym} and Corollary~\ref{topch}), Section~\ref{S5.2} (see Proposition~\ref{csx}) and Section~\ref{S5.3} (see Proposition~\ref{srx}).
\begin{theorem}
\la{CRMap}
With isomorphisms \eqref{HHH} and \eqref{HR1}, the derived character maps \eqref{Tr1} are induced on homology by a natural map of topological spaces 
\begin{equation}
\la{CRmap}
\CR_{B \Gamma}:\ ES^1 \times_{S^1}\! \cL(B \Gamma)\, \to \,  \Omega\,\SP^{\infty}\!(B\Gamma)
\end{equation}
The map \eqref{CRmap} factors $($as a homotopy natural transformation$)$ through the Carlsson-Cohen map \eqref{CSmap}:
\begin{equation}
\la{CSR}
ES^1 \times_{S^1}\! \cL(B \Gamma) \xrightarrow{\CS_{B \Gamma}}  \Omega\, \Omega^{\infty} \Sigma^{\infty}\!(B \Gamma) \xrightarrow{\SR_{B \Gamma}}   \Omega\,\SP^{\infty}\!(B\Gamma)
\end{equation}
where the induced map $\SR $ is the $($looped once$)$ canonical natural transformation $ \Omega^\infty \Sigma^\infty \to \SP^{\infty}\!$ relating stable homotopy to $($reduced$)$ singular
homology of pointed spaces.
\end{theorem}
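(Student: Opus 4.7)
The plan is to prove Theorem~\ref{CRMap} by decomposing both sides of the putative factorization and identifying each piece with something that already admits a topological model. Since Theorem~\ref{T1} and the list of isomorphisms~\eqref{HHH} provide purely topological realizations of $\HC_*(k[\Gamma])$, $\HS_*(k[\Gamma])$ and $\HR_*(k[\Gamma])$, the problem reduces to (a)~showing that the algebraically defined character map $\Tr(\Gamma)_*$ factors through symmetric homology, and (b)~lifting the two resulting factors to maps of spaces whose compositions realize the required transformations.

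\textbf{Step 1: Algebraic factorization through $\HS$.}\ I begin with the definition of the derived character map in terms of $\Tor$-functors over crossed simplicial groups (the construction of Definition~\ref{Gchar}). The inclusion of groups $C_{n+1}\hookrightarrow S_{n+1}$ induces a functor $\iota\colon\Delta C^{\op}\hookrightarrow\Delta S$, and the explicit formula for $\Tr(\Gamma)_\ast$ for one-dimensional representations shows that it is invariant under the symmetric action extending the cyclic one. I would then verify, by chasing the $\Tor$-description, that $\Tr(\Gamma)_\ast$ factors through the map $\HC_\ast(k[\Gamma])\to\HS_\ast(k[\Gamma])$ induced by $\iota$. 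This gives the required algebraic commuting triangle
\[
\HC_\ast(k[\Gamma])\xrightarrow{\iota_\ast}\HS_\ast(k[\Gamma])\xrightarrow{\sigma_\ast}\HR_\ast(k[\Gamma]),\qquad\sigma_\ast\circ\iota_\ast=\Tr(\Gamma)_\ast.
\]
This is essentially the content of Proposition~\ref{ftsym} and Corollary~\ref{topch}.

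\textbf{Step 2: Topological realization of the two factors.}\ The first factor $\iota_\ast$ is realized, under the isomorphisms~\eqref{HHH}, by the Carlsson–Cohen map $\CS_{B\Gamma}\colon ES^1\times_{S^1}\!\cL(B\Gamma)\to\Omega\,\Omega^{\infty}\Sigma^{\infty}(B\Gamma)$; this comparison is exactly Proposition~\ref{csx}, proved by matching simplicial models of the two sides with the functor $\iota$. For the second factor, I would construct $\SR_{B\Gamma}$ by simply looping once the canonical Dold–Thom/Hurewicz natural transformation $\Omega^{\infty}\Sigma^{\infty}X\to\SP^{\infty}X$ (the unique, up to homotopy, basepoint-preserving map of $E_\infty$-spaces inducing the classical Hurewicz homomorphism). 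Naturality of this transformation in $X$ then yields a natural transformation on the category of pointed spaces. The substantive step is to show that the map induced on homology by $\SR_{B\Gamma}$ coincides, under \eqref{HHH} and Theorem~\ref{T1}, with the algebraic map $\sigma_\ast$ from Step~1. This is Proposition~\ref{srx}; the proof would compare the simplicial model of $\HS_\ast(k[\Gamma])$ (as Tor over $\Delta S$) with the bar-like construction of $\SP^{\infty}(B\Gamma)$, both being computable from the underlying symmetric object in pointed spaces, and identify $\sigma_\ast$ with abelianization.

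\textbf{Step 3: Assembly and naturality.}\ Defining $\CR_{B\Gamma}:=\SR_{B\Gamma}\circ\CS_{B\Gamma}$ produces a natural transformation between functors on the homotopy category of pointed spaces. Combining Steps~1 and~2, the map induced on homology by $\CR_{B\Gamma}$ equals $\sigma_\ast\circ\iota_\ast=\Tr(\Gamma)_\ast$, which is the first assertion, and the factorization \eqref{CSR} is the definition of $\CR_{B\Gamma}$.

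The \emph{main obstacle} I anticipate is Step~2, specifically identifying the algebraic map $\sigma_\ast\colon\HS_\ast(k[\Gamma])\to\HR_\ast(k[\Gamma])$ with the looped Hurewicz map. This requires comparing two different simplicial presentations (one based on $\Delta S$-indexed bar constructions, the other on the $\SP^\infty$ construction of Theorem~\ref{T1}) and showing that the forgetful passage from symmetric monoidal to commutative structure corresponds, after totalization, to the stable-to-singular-homology natural transformation. Once this identification is in place, the rest of the argument is formal.
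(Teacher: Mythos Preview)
Your proposal is correct and follows essentially the same architecture as the paper: factor $\Tr(\Gamma)_*$ through $\HS_*$ via the lift $\tilde{\Psi}_{\rm sym}\colon\Delta S\to\ffgr_{\Z}^{\rm op}$ (Proposition~\ref{ftsym}, Corollary~\ref{topch}), then identify the two factors topologically (Propositions~\ref{csx} and~\ref{srx}). The one place where the paper's execution differs from what you sketch is Step~2 for $\SR$: rather than a direct comparison of the $\Delta S$-bar model with $\SP^\infty$, the paper passes through May's two-sided bar constructions over the little-cubes operads, using the equivalences $B(\uC_\infty,\uC_1,M)\simeq\Omega\,\Omega^\infty\Sigma^\infty(BM)$ and $B(\uC_0,\uC_1,M)\simeq\Omega\,\SP^\infty(BM)$ and identifying $\SR$ with the map induced by the operad morphism $\C_\infty\to\C_0$; your instinct that ``$\sigma_*$ is abelianization'' is exactly what this encodes, since $\uC_0(X)=\SP^\infty(X)$ is the free abelian monoid on $X$.
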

Theorem~\ref{CRMap} shows that, for any homotopy simplicial group $\Gamma $, the derived character map \eqref{Tr1} factors through symmetric homology, and the induced map
\begin{equation}
\la{HCSR}
\SR_{B\Gamma, \ast}:\ \HS_\ast(k[\Gamma]) \, \to \, \HR_\ast(k[\Gamma])
\end{equation}
is determined by a map of spaces that is well known in topology.
Using topological results, we then conclude (see Corollary~\ref{Cor22} and Remark~\ref{Rem5.1}):
\begin{corollary}
\la{Cor21}
If $k$ is a field of characteristic $0$, the map \eqref{HCSR} is an isomorphism,
at least when $ B \Gamma $ is a simply connected space.
\end{corollary}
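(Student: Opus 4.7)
The plan is to reduce the algebraic assertion to Serre's classical theorem that the stable Hurewicz map is a rational equivalence for simply connected spaces, by exploiting the topological interpretation of $\SR_{B\Gamma,\ast}$ furnished by Theorem~\ref{CRMap}. Concretely, I would combine Theorem~\ref{T1}, the third isomorphism in \eqref{HHH}, and Theorem~\ref{CRMap} to identify $\SR_{B\Gamma,\ast}$ with the map induced on singular $k$-homology by the topological map
\[
\Omega\,\SR_{B\Gamma}:\ \Omega\,\Omega^\infty\Sigma^\infty(B\Gamma)\,\longrightarrow\,\Omega\,\SP^\infty(B\Gamma).
\]
The problem thus becomes: show that $\Omega\,\SR_{B\Gamma}$ induces an isomorphism on $\H_\ast(-;k)$ whenever $B\Gamma$ is simply connected.

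Next, I would observe that, via the Dold--Thom theorem $\pi_\ast\,\SP^\infty X\cong \tilde{\H}_\ast(X;\Z)$, the natural transformation $\SR_X:\Omega^\infty\Sigma^\infty X\to \SP^\infty X$ realizes the stable Hurewicz map $\pi^s_\ast(X)\to\tilde{\H}_\ast(X;\Z)$. By Serre's theorem this is a rational isomorphism for every simply connected $X$. Hence $\SR_{B\Gamma}$ is a rational equivalence between infinite loop spaces; since looping merely shifts homotopy groups, $\Omega\,\SR_{B\Gamma}$ is likewise a rational equivalence, now between connected H-spaces.

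To finish, I would invoke the standard fact that a rational equivalence between connected H-spaces induces an isomorphism on $\H_\ast(-;\Q)$; this follows from the Milnor--Moore structure theorem applied to the Pontrjagin ring, or equivalently from the rational Postnikov decomposition of simple spaces. Since $k$ is a field of characteristic zero, $\H_\ast(-;k) \cong \H_\ast(-;\Q)\otimes_\Q k$, so the isomorphism persists with $k$-coefficients, yielding the desired conclusion.

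The main subtlety I anticipate is that $\Omega^\infty\Sigma^\infty(B\Gamma)$ need not be simply connected even when $B\Gamma$ is, since $\pi_1^s(B\Gamma)$ can be nontrivial torsion; the same issue affects $\Omega\,\SP^\infty(B\Gamma)$ via $\tilde{\H}_2(B\Gamma;\Z)$. This precludes a direct appeal to simply-connected rational homotopy theory, and is the reason one must work in the slightly larger category of simple spaces --- which, here, is harmless because both the source and target of $\Omega\,\SR_{B\Gamma}$ are infinite loop spaces and hence automatically simple.
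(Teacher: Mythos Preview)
Your argument is correct and takes a genuinely different route from the paper's. The paper (see Corollary~\ref{Cor22}) argues on the level of \emph{cohomology}: it first quotes the fact that the unlooped map $i_\infty:\Omega^\infty\Sigma^\infty X\to\SP^\infty X$ induces an isomorphism $H^\ast(\SP^\infty X;k)\xrightarrow{\sim}H^\ast(\Omega^\infty\Sigma^\infty X;k)$ in characteristic~$0$, and then runs the Eilenberg--Moore (Cotor) spectral sequence $E_2^{\ast,\ast}(Z)=\Ext^\ast_{H^\ast(Z;k)}(k,k)\Rightarrow H_\ast(\Omega Z;k)$ together with the Comparison Theorem to pass from $Z$ to $\Omega Z$. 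This is why the paper imposes the additional hypothesis that $B\Gamma$ be of finite rational type (needed for the spectral sequence to behave well). You instead argue on the level of \emph{homotopy}: identify the map with the stable Hurewicz map, invoke Serre's theorem to get a $\pi_\ast\otimes\Q$-isomorphism, and then use that a rational equivalence between simple (infinite loop) spaces is an $H_\ast(\,\mbox{--}\,;\Q)$-isomorphism. Your approach is more elementary, avoids spectral sequences, and does not require the finite-type hypothesis --- so it actually proves the introductory statement as written rather than the slightly weaker form established in the body of the paper.

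One small remark: your stated worry that $\pi_1^s(B\Gamma)$ may be nontrivial torsion is in fact moot. If $B\Gamma$ is simply connected then $\Sigma^n B\Gamma$ is $(n{+}1)$-connected, so $\pi_1^s(B\Gamma)=\colim_n\pi_{n+1}(\Sigma^n B\Gamma)=0$, and hence $\Omega^\infty\Sigma^\infty(B\Gamma)$ is already simply connected. Your precaution of working in the category of simple spaces is therefore unnecessary here, though of course harmless.
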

The results stated above are all concerned with derived characters of one-dimensional representations. For higher dimensional representations ($n >1$), the maps \eqref{Tr*} are more complicated: in particular, they do not seem to factor through $ \HS_\ast(k[\Gamma])$, and in general, the relation between symmetric homology and representation homology remains mysterious. However, when $ n \to \infty $, things become more tractable. Assuming that $k$ is a field of characteristic $0$, we 
can naturally pass to the projective limit:
$$
\HR_\ast(\Gamma,\,\GL_\infty(k))^{\GL_{\infty}} 
:=\, \varprojlim_{n}\, \HR_\ast(\Gamma,\,\GL_n(k))^{\GL_n}
$$
and construct the {\it stable} character maps
\begin{equation} \la{stable} 
\Tr_\infty(\Gamma)_{\ast}:\ 
\rHC_\ast(k[\Gamma]) \,\to\, \HR_\ast(\Gamma,\,\GL_\infty(k))^{\GL_{\infty}} \,,
\end{equation}
where $ \rHC $ stands for the reduced cyclic homology. In this case, we have the following result, the proof
of which is parallel to \cite{BR} and outlined in the last section of the paper
(see Theorem~\ref{SRHs}).
\begin{theorem} 
\la{SRHsi}
Let $ \Gamma $ be a homotopy simplicial group such that
$ B\Gamma $ is a simply connected space of finite $($rational$)$ type. Then the stable character maps \eqref{stable} induce an algebra isomorphism
\begin{equation} 
\la{sttr}
\Sym \Tr_{\infty}(\Gamma)_\ast:\ \Sym_k[\,\rHC_\ast(k[\Gamma])] \,\sto \, \HR_\ast(\Gamma,\GL_\infty)^{\GL_\infty}\,,
\end{equation}
where $\Sym_k[\,\rHC_\ast(k[\Gamma])]$ is the graded symmetric algebra generated by the reduced cyclic homology of $k[\Gamma]$.
\end{theorem}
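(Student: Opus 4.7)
The plan is to adapt the strategy of \cite{BR}, where the analogous statement was proved for simplicial associative algebras. The argument has three main parts: constructing $\Sym \Tr_\infty(\Gamma)_\ast$ as a well-defined algebra morphism, establishing surjectivity via invariant theory, and proving injectivity using the finiteness hypotheses.

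The extension of $\Tr_\infty(\Gamma)_\ast$ to the symmetric algebra is formal: since $\HR_\ast(\Gamma,\GL_\infty)^{\GL_\infty}$ is a graded commutative $k$-algebra and $\Sym_k$ is left adjoint to the forgetful functor, the trace map from $\rHC_\ast(k[\Gamma])$ extends uniquely to a $k$-algebra map. Well-definedness on the reduced quotient follows from the vanishing of traces on commutators.

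For surjectivity, I would use the explicit simplicial commutative algebra model of $\cO[\DRep_n(\Gamma)]$ from \cite{BRYI}. On this model the $\GL_n$-action is by affine transformations of matrix entries, so classical invariant theory --- the First Fundamental Theorem for $\GL_n$ together with Procesi's refinement --- identifies $\GL_n$-invariants with a graded algebra generated by traces of words in the matrix generators. Passing to the stable limit $n \to \infty$ kills the Cayley--Hamilton relations among these traces, and in that limit the invariant algebra is freely generated, by the First Fundamental Theorem for $\GL_\infty$, by trace classes that are precisely the images of $\rHC_\ast(k[\Gamma])$ under $\Tr_\infty(\Gamma)_\ast$.

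For injectivity, the hypotheses that $B\Gamma$ is simply connected of finite rational type ensure that both $\rHC_\ast(k[\Gamma])$ and $\HR_\ast(\Gamma,\GL_\infty)^{\GL_\infty}$ have finite-dimensional pieces in each total degree and vanish in negative degrees, reducing the problem to a degree-by-degree dimension count. The main obstacle is controlling the passage to $\varprojlim_n$: one must verify that the Cayley--Hamilton relations valid at finite $n$ disappear cleanly in the stable limit while preserving the algebra and grading structure. The simple-connectivity and finite-type hypotheses are precisely what is needed to make this limit well-behaved, exactly as in the parallel associative-algebra argument of \cite{BR}.
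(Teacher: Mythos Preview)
Your general strategy---adapt \cite{BR}---is exactly what the paper does, but the execution has a real gap that the paper closes and your proposal does not.

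The paper does \emph{not} work directly with the simplicial model of $\cO[\DRep_n(\Gamma)]$. Instead, it replaces $k[\Gamma]$ by $UL_X$, where $L_X$ is a semi-free Quillen model of $X=B\Gamma$; Remark~\ref{RemAlg} justifies this substitution. The point of this move is that simple-connectivity forces all generators of $L_X$ (hence of $UL_X$) to sit in \emph{strictly positive} homological degree, and finite rational type makes them finite in each degree. These two facts are what drive the stabilization: for each fixed truncation degree $k$, only finitely many generators are visible, so for $n$ sufficiently large the map $\tilde\mu_{n+1,n}: R_{n+1}^{\GL,\le k}\to R_n^{\GL,\le k}$ is an isomorphism of complexes (this is the content of \cite[\S7.4, Prop.~7.5]{BR}). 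From there one gets Mittag--Leffler, vanishing of $\varprojlim^1$, and hence $\H_*[R_\infty^{\GL}]\cong\varprojlim_n\H_*[R_n^{\GL}]$. Only then does \cite[Theorem~7.8]{BR} apply.

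Your surjectivity paragraph conflates chain-level and homology-level statements. The First Fundamental Theorem and ``passing to the limit kills Cayley--Hamilton'' give freeness of the invariant ring \emph{at the chain level} for a free tensor algebra, but you need the isomorphism on \emph{homology}, and for that you must know that homology commutes with the inverse limit---which is exactly the Mittag--Leffler step above, and it rests on the positive-grading property you never arranged. Your injectivity-by-dimension-count is circular: you do not have an independent computation of $\dim_k\HR_i(\Gamma,\GL_\infty)^{\GL_\infty}$, so there is nothing to count against. The paper avoids both issues by first making the reduction to the Quillen model and then quoting \cite{BR} wholesale.
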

We close this Introduction by mentioning one application of stable character maps in derived Poisson
geometry. If $\Gamma $ is a simplicial group model of a simply-connected closed manifold $X$ of dimension $d$ (so that $X \simeq B\Gamma $), then, by \eqref{HHH}, we can identify $ \rHC_\ast(k[\Gamma]) $ with the reduced
$S^1$-equivariant homology  $\bar{\H}^{S^1}_{*}(\mathcal{L}(X); k) $ of the free loop space of $X$.
Thanks to the work of Chas and Sullivan, the latter is known to carry the so-called {\it string topology}\,
Lie bracket, making the symmetric algebra $ \Sym_k[\,\rHC_\ast(k[\Gamma])] \cong \Sym_k[\,\bar{\H}^{S^1}_{*}(\mathcal{L}(X); k)\,]
$ a graded Poisson algebra. On the other hand, the representation homology ring $\HR_\ast(\Gamma,\GL_\infty)^{\GL_\infty}$ acquires a $(2-d)$-shifted graded Poisson structure from the Poincar\'{e} duality pairing on (the cohomology of) $X$. As an application of Theorem~\ref{SRHsi}, 
we show that under the isomorphism \eqref{sttr}, these two Poisson structures agree: i.e., the 
map \eqref{sttr} is an isomorphism of graded Poisson algebras (see Corollary \ref{manifold}).

The paper is organized as follows. In Section \ref{S2}, we review basic facts from abstract homotopy 
theory concerning homotopy colimits. The new result proved in this section is Proposition \ref{ShLemma}, which we refer to as `Shapiro Lemma for model categories'. This proposition provides a key step for proofs of main theorems in Section~\ref{S4} and may be of independent interest. In Section \ref{S3}, after reviewing basic theory of homotopy simplicial groups (Section~\ref{S3.1}), we define representation homology (Section~\ref{S3.2}), and cyclic homology (Section~\ref{S3.3}) for such groups and construct the 
derived character maps relating the two (Section~\ref{S3.4}). In Section \ref{S4}, we prove 
Theorem \ref{T1} (Section~\ref{S4.1}) and then, after defining symmetric homology for homotopy simplicial groups (Section~\ref{S4.2}), we prove part of Theorem \ref{CRMap} (see Proposition \ref{ftsym} and Corollary \ref{topch} in Section~\ref{S4.3}). The proof of Theorem \ref{CRMap} is completed in Section \ref{S5}, where we study the maps \eqref{CRmap} and \eqref{CSR} in topological terms, using Goodwillie homotopy calculus and classical operads (see Proposition \ref{csx} and Proposition \ref{srx}). 
Finally, in Section \ref{S6}, we describe the stabilization procedure for the derived character maps as $n \to \infty$ and sketch the proofs of Theorem \ref{SRHsi} and Corollary \ref{manifold}. Each of the six sections begins with a short introduction that provides more details about its contents.

\subsection*{Acknowledgments} The work of the first author was partially supported by NSF grant DMS 1702372 and the Simons Collaboration Grant 712995. The second author was partially supported by NSF grant DMS 1702323.

\section{Shapiro Lemma for Model Categories}\la{S2}
In this section, we establish one general result in abstract homotopy theory related to homotopy colimits that will provide a key step in the proof of our main Theorem~\ref{T1}. We call this result (Proposition~\ref{ShLemma})
`Shapiro Lemma for model categories' as it appears to be a non-abelian generalization (in the context of model categories) of the classical Shapiro Lemma in homological algebra. We begin with a brief overview of the theory of homotopy
colimits. The standard reference for this material is the last two chapters of Hirschhorn's book \cite{Hir} 
but many results that we mention are classical and go back to  Bousfield-Kan \cite{BK72} and Quillen \cite{Q2}. Our exposition is inspired by Cisinski's beautiful paper \cite{Cis} that treats homotopy colimits axiomatically by the analogy with derived direct image functors in algebraic geometry (unlike \cite{Cis}, however, we do not use the language of Grothendieck derivators). With exception of Proposition~\ref{ShLemma}, which (to the best of our knowledge) is new, all results in this section are known. 

\subsection{Notation and conventions}
Throughout this section, $ \M $ will denote a fixed model category which we assume to be cofibrantly generated
and having all small limits and colimits. Unless stated
otherwise, $\cA$, $ \cB$, $ \cC$, $\ldots$ will denote small categories that we will use to index diagrams in $\M$. For a small category $\cA$, the category of $\cA$-diagrams in $\M$ (i.e. all functors $ \cA \to \M $) will be denoted $\M^{\cA} $. As usual, $ {\rm Cat} $ will stand for the category of all small categories with morphisms being arbitrary functors.

\subsection{Homotopy colimits}
\la{S2.2} 
For any small category $\cA$, the category $\M^{\cA} $ has a projective (aka Bousfield-Kan) model structure inherited from $\M$: the weak equivalences and fibrations are defined in this model structure objectwise, while the cofibrations are determined by the Lifting Axiom of model categories (specifically, as morphisms having
the left lifting property with respect to fibrations which are also weak equivalences in $ \M^{\cA}$). 
Since $\M$ is cofibrantly generated, such a model structure on $\M^{\cA}$ always exists and is cofibrantly generated (see \cite[Theorem 11.6.1]{Hir}).

Any functor $f: \cA \to \cB$ (a morphism in ${\rm Cat}$) defines the {\it pullback functor} on the diagram categories $\,f^\ast: \M^{\cB} \to \M^{\cA}\,$, which is obtained by restricting diagrams $ \cB \to \M $ along $f$. This pullback functor preserves objectwise weak equivalences and fibrations and --- since $\M$ has small 
colimits -- admits a left adjoint:
\begin{equation} 
\la{fsh} 
f_{!}: \M^{\cA} \rightleftarrows \M^{\cB}: f^\ast  
\end{equation}
defined on a diagram $ X: \cA \to \M $ as the left Kan extension $\,f_{!}(X) := {\rm Lan}_f(X)\,$ of  
$ X $ along $f$. Thus, the functors \eqref{fsh} form a Quillen pair between the model categories $\M^{\cA}$ and $\M^{\cB}$. Then, by Quillen's Adjunction Theorem (see \cite[8.5.8]{Hir}), they admit total (left and right) 
derived functors
\begin{equation} 
\la{Dfsh}
\L f_!: \Ho(\M^{\cA}) \rightleftarrows \Ho(\M^{\cB}): f^\ast
\end{equation}
that form an adjunction between the homotopy categories of diagrams induced by \eqref{fsh}.

The derived pushforward functor $\L f_!$ is called the {\it homotopy left Kan extension} along $f$.
It is a generalization of the classical {\it homotopy colimit functor} $\,\hocolim_{\cA}: \Ho(\M^{\cA}) \to \Ho(\M)\,$ that corresponds to the trivial map
$\, \cA \to \ast \,$, where ``$ \ast $'' denotes the one-point category (the terminal object in ${\rm Cat}$). 
In this last case, we will use the classical notation writing $ {\rm hocolim}_{\cA}(X) $ instead of 
$ \L(\cA \to \ast)_!(X) $ for $ X: \cA \to \M $.
We summarize the main properties of this construction in the following theorem.
\begin{theorem}[see \cite{Cis}]
\la{CisT}
Let $\M$ be a model category with all small limits and colimits.

\vspace{1ex}

$(1)$ $2$-{\rm Functoriality:}\, The pullback functors  $ f^\ast $ fit together
to give a strict, weakly product-preserving 2-functor $\, {\rm Cat}^{\rm op} \to {\rm CAT}\,$ that takes a small category $ \cA \in {\rm Cat} $ to the homotopy category $\Ho(\M^{\cA}) $. By adjunction, this implies, in particular, the existence of natural weak equivalences
\begin{equation} 
\la{funct} 
\L (fg)_! \simeq \L f_! \, \L g_!\  
\end{equation}
for any composable morphisms $f$ and $g$ in ${\rm Cat}$.

\vspace{1ex}

$(2)$ {\rm Reflexivity:}\, For any $ \cA \in {\rm Cat}$, the functor $ i^*: \Ho(\M^{\cA}) \to 
\Ho(\M^{{\cA}^{\delta}}) $ corresponding to the inclusion of the underlying discrete subcategory $ \cA^{\delta} \subset \cA $ is conservative, i.e. reflects the weak equivalences in $\M^{{\cA}^{\delta}}$.

\vspace{1ex}

$(3)$ {\rm Base change:}\, For any $f:\cA \to \cB$ and any object $ b \in \cB$, the $2$-commutativity of
the fibre square
$$
\begin{diagram}[small]
f\downarrow b & \rTo^{\pi} & \cA\\
\dTo^{p} & \Nwarrow & \dTo_f\\
\ast & \rTo^b & \cB\\
\end{diagram}$$
induces a change-of-base natural transformation that is a
natural weak equivalence:
$$ \L p_! \, \pi^\ast \stackrel{\sim}{\to} b^\ast \, \L f_! $$
For a diagram $X: \cA \to \M$, this simply says that 
\begin{equation} \la{basech} 
\L f_!X(b) \simeq {\rm hocolim}_{f \downarrow b}(\pi^\ast X) \ .
\end{equation}
\end{theorem}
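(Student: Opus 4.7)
The three claims have quite different flavors, and I would treat them in reverse order of depth: (2) is essentially built into the definition of the projective model structure, (1) is formal 2-categorical accounting, and (3), the base-change formula, carries the real content. I would begin by recording that for every functor $f:\mathcal{A}\to\mathcal{B}$ the pullback $f^{*}$ preserves weak equivalences and fibrations (these are defined objectwise in the projective structure), so $(f_!,f^*)$ is a Quillen pair, and the total derived adjunction $(\L f_!, f^*)$ exists by Quillen's adjunction theorem. Since $f^*$ already preserves weak equivalences, there is no derivation needed on the right.

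For (2): a morphism $\varphi$ in $\mathcal{M}^{\mathcal{A}}$ becomes invertible in $\Ho(\mathcal{M}^{\mathcal{A}^\delta})$ iff each component $\varphi_a$ is a weak equivalence in $\mathcal{M}$ (because $\mathcal{M}^{\mathcal{A}^\delta}=\prod_{a}\mathcal{M}$, whose homotopy category is $\prod_a \Ho(\mathcal{M})$), and this is exactly the definition of a projective weak equivalence in $\mathcal{M}^{\mathcal{A}}$. So conservativity of $i^*$ is tautological. For (1): on the strict level, pullback is literally precomposition, so $(fg)^*=g^*f^*$ on the nose and $\mathrm{id}_{\mathcal{A}}^*=\mathrm{id}$; this makes $\mathrm{Cat}^{\mathrm{op}}\to\mathrm{CAT}$, $\mathcal{A}\mapsto\mathcal{M}^{\mathcal{A}}$, a strict 2-functor, and since $f^*$ always preserves weak equivalences the whole diagram descends to a strict 2-functor $\mathrm{Cat}^{\mathrm{op}}\to\mathrm{CAT}$ on homotopy categories. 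The natural weak equivalence $\L(fg)_!\simeq \L f_!\,\L g_!$ is then forced by uniqueness of (derived) left adjoints to the strictly composable right adjoints $g^*f^*=(fg)^*$.

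The real work is (3). My plan is to use the classical pointwise formula for ordinary left Kan extensions, $(f_!X)(b)=\mathrm{colim}_{f\downarrow b}\pi^*X$, and to upgrade it to a formula for the derived functor. Concretely, I would pick a projective cofibrant replacement $QX\stackrel{\sim}{\to}X$ in $\mathcal{M}^{\mathcal{A}}$; then by definition $\L f_!X(b)\simeq f_!(QX)(b)=\mathrm{colim}_{f\downarrow b}\pi^*(QX)$. To identify the right-hand side with $\mathrm{hocolim}_{f\downarrow b}\pi^*X$, the point is to verify that $\pi^*(QX)$ is a projective cofibrant object of $\mathcal{M}^{f\downarrow b}$ and is weakly equivalent to $\pi^*X$; the latter holds since $\pi^*$ preserves objectwise weak equivalences, so the whole question reduces to the former.

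The key obstacle, and the one I would expect to take effort, is showing that $\pi^*:\mathcal{M}^{\mathcal{A}}\to\mathcal{M}^{f\downarrow b}$ preserves projective cofibrations. This is equivalent, by adjunction, to showing that its right adjoint (right Kan extension $\pi_*$) preserves projective trivial fibrations. I would approach this in one of two standard ways: either (a) identify $(\pi_*Y)(a)$ pointwise as a limit over the comma category $a\downarrow\pi$, and analyze when this limit preserves trivial fibrations using Reedy or cofibrant-generation arguments; or (b) bypass pointwise arguments and use the Bousfield--Kan simplicial bar construction $B(*,\mathcal{A},X)$ as a functorial, model-independent realization of $\mathrm{hocolim}$, for which the base-change identity becomes the naturality isomorphism $B(*,f\downarrow b,\pi^*X)\cong b^*B(*,\mathcal{A},X)$ of bar constructions. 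Approach (b) is cleaner in the generality of cofibrantly generated $\mathcal{M}$, since it avoids delicate pointwise analysis of $\pi_*$; once the bar construction is known to compute the homotopy colimit in $\mathcal{M}$ (which follows from cofibrant generation and the existence of functorial simplicial resolutions), the base change formula \eqref{basech} follows from a direct manipulation of indexing categories. I would adopt (b) as the main line and invoke standard results from \cite{Hir} for the bar-construction model of $\mathrm{hocolim}$.
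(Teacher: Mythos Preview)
The paper does not give its own proof of this theorem: it is stated with the attribution ``see \cite{Cis}'' and is followed only by the remark that these properties are ``essentially formal'' in Cisinski's derivator framework. So there is no proof in the paper to compare against; your sketch is in effect a proposed replacement for a citation.

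That said, your outline is sound and lines up with the standard arguments (e.g.\ those in \cite{Hir}, Ch.~18--19). Your treatments of (1) and (2) are correct and indeed formal. For (3), your reduction to showing that $\pi^{*}:\mathcal{M}^{\mathcal{A}}\to\mathcal{M}^{f\downarrow b}$ preserves projective cofibrations is exactly the right diagnosis, but you stop short of actually checking it; since you call this the ``key obstacle'', let me point out that it is not hard to finish directly. In the cofibrantly generated case the generating projective cofibrations of $\mathcal{M}^{\mathcal{A}}$ are of the form $\mathcal{A}(a,-)\cdot i$ with $i$ a generating cofibration of $\mathcal{M}$, and one computes
\[
\pi^{*}\bigl(\mathcal{A}(a,-)\bigr)\ \cong\ \coprod_{\phi\in\mathcal{B}(f(a),b)}\ (f\downarrow b)\bigl((a,\phi),-\bigr),
\]
so $\pi^{*}$ sends each generating projective cofibration to a coproduct of generating projective cofibrations in $\mathcal{M}^{f\downarrow b}$. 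Hence $\pi^{*}$ is left Quillen, $\pi^{*}(QX)$ is projectively cofibrant, and your identification $\L f_{!}X(b)\simeq \mathrm{colim}_{f\downarrow b}\pi^{*}(QX)\simeq \mathrm{hocolim}_{f\downarrow b}\pi^{*}X$ goes through. Your alternative route (b) via the two-sided bar construction is also perfectly valid and is essentially how \cite{Hir} packages the argument; either approach suffices.
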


\vspace{1ex}

\begin{remark}
\la{Rem1}
In terminology of \cite{Cis} ({\it cf.} Def.~1.6, pp. 205-206), the properties $(1)$-$(3)$ of Theorem~\ref{CisT} can be summarized by saying that the 2-functor $ \Ho(\M^{-}): {\rm Cat}^{\rm op} \to {\rm CAT} $ is a weak left derivator ({\it un d\'erivateur faible \`a gauche}) associated to the model category $\M$.
\end{remark}

\vspace{1ex}

The properties of homotopy colimits listed in Theorem~\ref{CisT} are essentially formal. The next result --- called the Cofinality Theorem --- gives a deeper property of homotopy-theoretic nature that is very useful in computations. To state this result we recall that a functor $\,f:\cA \to \cB\,$ is  {\it right homotopy cofinal} if its comma-category $ b \downarrow f $ under each object $ b \in \cB $ is (weakly) contractible, i.e. $\,B(b \downarrow f) \simeq {\rm pt}\,$. As an example, we point out that every right adjoint functor is right homotopy cofinal: 
indeed, if $\,f:\cA \to \cB\,$ admits a left adjoint, say $ g: \cB \to \cA $, then each comma-category $ b \downarrow f $ has an initial object (namely, $(b, \eta_b)$, where $ \eta_b: b \to fg(b) $ is
the unit of the adjunction evaluated at $b \in \cB$), hence $ b \downarrow f $ is contractible for any $b \in \cB $.
\begin{theorem}[Cofinality Theorem]
\la{Cofinality}
If $ f: \cA \to \cB $ is right homotopy cofinal, then the natural map
$$
{\rm hocolim}_{\cA}(f^\ast X) \stackrel{\sim}{\to} {\rm hocolim}_{\cB}(X)  
$$
is a weak equivalence for any diagram $X:\,\cB \to \M$.
\end{theorem}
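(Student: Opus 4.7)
I would prove the Cofinality Theorem by reducing it to a computation on a generating class of ``free'' $\cB$-diagrams, where the hypothesis on $b\downarrow f$ enters through the identification of their homotopy colimits with nerves of the relevant comma categories.

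First, by the 2-functoriality in Theorem~\ref{CisT}(1) applied to the factorization $\cA\xrightarrow{f}\cB\xrightarrow{p_{\cB}}\ast$, one has a natural weak equivalence $\hocolim_{\cA}(f^{\ast}X)\simeq\hocolim_{\cB}(\L f_{!}f^{\ast}X)$, and the comparison map of the theorem is obtained by applying $\hocolim_{\cB}$ to the derived counit $\epsilon_{X}\colon\L f_{!}f^{\ast}X\to X$ in $\Ho(\M^{\cB})$. It should be emphasized that $\epsilon_{X}$ is generally \emph{not} an objectwise weak equivalence: the base change formula of Theorem~\ref{CisT}(3) expresses $(\L f_{!}f^{\ast}X)(b)$ as a homotopy colimit over $f\downarrow b$, which has nothing a priori to do with the $b\downarrow f$ of the cofinality hypothesis. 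Consequently, that hypothesis can only surface after applying $\hocolim_{\cB}$ to the counit, and any naive pointwise argument is doomed.

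Next, I would reduce the problem to a class of test diagrams. The projective model structure on $\M^{\cB}$ is cofibrantly generated by morphisms of the form $y_{b}\otimes i$, where $y_{b}:=\Hom_{\cB}(b,-)$ is the covariant representable and $i$ ranges over a set of generating (co)fibrations of $\M$. Since both functors $\hocolim_{\cA}\circ f^{\ast}$ and $\hocolim_{\cB}$ preserve homotopy colimits in $X$, it suffices to verify the equivalence on ``free'' diagrams of the form $X=y_{b}\otimes M$ with $M\in\M$ cofibrant. For such an $X$, a direct calculation (from either the Bousfield--Kan bar formula or an iteration of the base change formula) identifies
\begin{equation*}
\hocolim_{\cB}(y_{b}\otimes M)\,\simeq\,B(b\downarrow\cB)\otimes M\,\simeq\,M,\qquad \hocolim_{\cA}(f^{\ast}y_{b}\otimes M)\,\simeq\,B(b\downarrow f)\otimes M,
\end{equation*}
the first identification collapsing because $b\downarrow\cB$ has $(b,1_{b})$ as initial object and is therefore contractible. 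The cofinality hypothesis $B(b\downarrow f)\simeq\ast$ collapses the second right-hand side to $M$ as well, and the naturality of the construction identifies the comparison with the induced weak equivalence.

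The principal technical obstacle is giving a precise meaning to the homotopy colimit of a free diagram $y_{b}\otimes M$ in a general cofibrantly generated model category. This is routine when $\M$ is simplicial (where tensoring with simplicial sets is directly available), but in the general setting it requires either a cosimplicial framing or a two-sided bar construction, as developed in Hirschhorn~\cite{Hir}. Alternatively, the theorem follows abstractly from the weak left derivator structure on $\Ho(\M^{-})$ noted in Remark~\ref{Rem1}, which is the approach taken in Cisinski~\cite{Cis}.
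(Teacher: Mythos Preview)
Your sketch is sound, but you should know that the paper does not actually prove this theorem: it simply cites \cite[Theorem~19.6.7]{Hir} and moves on. So there is no ``paper's own proof'' to compare against beyond Hirschhorn's argument.

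That said, your reduction is essentially the standard one, and it is worth recording what it buys versus a direct appeal to the Bousfield--Kan formula. Hirschhorn's proof works directly with the explicit bar construction for $\hocolim$ (built from cosimplicial frames) and analyzes the induced map of simplicial objects; the contractibility of $b\downarrow f$ enters through a latching-object computation. Your approach instead reduces to the generating free diagrams $y_b\otimes M$ via a cellular induction, using that both $\hocolim_\cB$ and $\hocolim_\cA\circ f^\ast$ preserve homotopy colimits (the latter because $f^\ast$ has a right adjoint $f_\ast$ and preserves all weak equivalences, hence is itself a homotopy-colimit-preserving functor at the derived level). On free diagrams the computation collapses to the nerves $N(b\downarrow\cB)$ and $N(b\downarrow f)$, where the cofinality hypothesis is visibly exactly what is needed. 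The two routes are equivalent in content; yours is perhaps cleaner conceptually, at the cost of front-loading the abstract derivator machinery of Theorem~\ref{CisT}.

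One small point: your aside that ``any naive pointwise argument is doomed'' is well taken and worth keeping --- it is a common error to confuse $f\downarrow b$ (which governs $\L f_!$ via base change) with $b\downarrow f$ (which governs cofinality). The technical obstacle you flag about framings in non-simplicial $\M$ is real and is precisely why the paper defers to \cite{Hir} rather than spelling anything out.
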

For the proof of Theorem~\ref{Cofinality} we refer to \cite[Theorem 19.6.7]{Hir}. 
As an application, we prove one simple lemma that we will need for our computations. Given a functor $\,f:\cA \to \cB\,$, we recall that its {\it fibre category} $f^{-1}(b)$ over an object $ b \in \cB $ is the subcategory
of $ \cA $ consisting of all objects $ a \in \cA $ such that $ f(a) = b $ and all morphisms $ \varphi \in \Hom_{\cA}(a,a')$ such that $ f(\varphi) = {\rm Id}_b $. Note that the fibre inclusion functor $i: f^{-1}(b) \into \cA $ factors through the comma-category $f \downarrow b $ over $b$:
\begin{equation}
\la{ijpi}
\begin{diagram}[small]
f^{-1}(b) & \rInto^{i} & \cA\\
 \dTo^{j} & \ruTo_{\pi} &\\
 f \downarrow b & &\\
\end{diagram}
\end{equation}
defining the `comparison' functor
\begin{equation}
\la{prec}
j:\ f^{-1}(b) \to f \downarrow b\,\qquad a \mapsto (a, f(a)=b \stackrel{{\rm Id}}{\to} b) 
\end{equation}
Recall that a functor $f:\cA \to \cB$ is {\it precofibred} if \eqref{prec} has a left adjoint for every object $b \in \cB $  (see \cite[\S 1]{Q}).
%
%
\begin{lemma} 
\la{precof}
If $f:\cA \to \cB$ is precofibred, then 
$$ 
(\L f_!X)(b) \,\simeq\, {\rm hocolim}_{f^{-1}(b)}\,(i^\ast X)
$$
for any diagram $X: \cA \to \M$.
\end{lemma}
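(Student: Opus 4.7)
The plan is to reduce the statement to the Cofinality Theorem \ref{Cofinality} via the base change formula \eqref{basech}. First, I would apply Theorem~\ref{CisT}(3) to rewrite the value of the homotopy left Kan extension at $b$ as a homotopy colimit over the comma category:
\[
(\L f_! X)(b) \,\simeq\, \hocolim_{f \downarrow b}(\pi^\ast X),
\]
where $\pi: f \downarrow b \to \cA$ is the projection from the diagram \eqref{ijpi}. So the task is reduced to producing a natural weak equivalence
\[
\hocolim_{f^{-1}(b)}(i^\ast X) \,\simeq\, \hocolim_{f \downarrow b}(\pi^\ast X).
\]

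Next, I would use the factorization $i = \pi \circ j$ in \eqref{ijpi}, which gives $i^\ast X = j^\ast(\pi^\ast X)$. By the precofibred hypothesis, the comparison functor $j: f^{-1}(b) \to f \downarrow b$ in \eqref{prec} admits a left adjoint. As observed in the discussion preceding Theorem~\ref{Cofinality}, any right adjoint functor is right homotopy cofinal, because each comma category under it has an initial object supplied by the unit of the adjunction. Hence $j$ is right homotopy cofinal.

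Finally, I would apply the Cofinality Theorem~\ref{Cofinality} to the functor $j$ and the diagram $\pi^\ast X: f \downarrow b \to \M$, yielding
\[
\hocolim_{f^{-1}(b)}(j^\ast \pi^\ast X) \,\stackrel{\sim}{\to}\, \hocolim_{f \downarrow b}(\pi^\ast X).
\]
Combining this with $j^\ast \pi^\ast X = i^\ast X$ and the base change equivalence of the first step produces the required natural weak equivalence. No step here looks delicate: all the technical content is packaged in Theorems \ref{CisT} and \ref{Cofinality}, and the only input from the hypothesis is the existence of a left adjoint to $j$, which is precisely what makes the cofinality argument go through. In this sense the lemma is essentially a direct application of base change together with cofinality, and I do not anticipate a genuine obstacle.
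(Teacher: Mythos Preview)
Your proposal is correct and follows essentially the same approach as the paper: apply the base change formula to express $(\L f_! X)(b)$ as a homotopy colimit over $f\downarrow b$, observe that the precofibred hypothesis makes $j$ a right adjoint and hence right homotopy cofinal, and then invoke the Cofinality Theorem together with the factorization $i = \pi \circ j$ to conclude.
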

\begin{proof}
By assumption, the inclusion functor $\,j:f^{-1}(b) \to f \downarrow b\,$ is right adjoint, hence right homotopy cofinal. By the base change formula \eqref{basech} and Cofinality Theorem~\ref{Cofinality}, we conclude
\begin{eqnarray*}
(\L f_!X)(b) & \simeq & {\rm hocolim}_{f \downarrow b}(\pi^\ast X)\\
             & \simeq & {\rm hocolim}_{f^{-1}(b)}(j^\ast \pi^\ast X)\\
             &  = &  {\rm hocolim}_{f^{-1}(b)}((\pi j)^\ast X)\\
             & = & {\rm hocolim}_{f^{-1}(b)}(i^\ast X)
\end{eqnarray*}
where the last identification follows from \eqref{ijpi}.
\end{proof}
In practice, precofibred functors arise from the so-called Grothendieck construction
(see \cite{T}). Given a functor $\,F: \cC \to {\rm Cat}\,$ (i.e., a strict diagram of small catgories), its {\it Grothendieck construction} is defined to be the small category $\,\cC\! \smallint\! F$ with 
$\,{\rm Ob}(\cC\! \smallint\! F) := \{(c, x): c \in \cC,\, x \in F(c)\}\,$ and morphism sets 
\begin{equation}
\la{homcf}
\Hom_{\cC\! \int\! F}((c,x), \,(c',x')) := \{ (\varphi, f): \varphi \in {\rm Hom}_{\cC}(c,c'),\, f \in {\rm Hom}_{F(c')}(F(\varphi)x, x')\}\ .
\end{equation}
The composition in $\cC\! \int\! F$ is given by $\,(\varphi, f) \circ (\varphi', f') = (\varphi \, \varphi',\ f \, F(\varphi)f')\,$. The category $\cC\! \int\! F$ comes equipped with a natural (forgetful) functor 
$$
p: \, \cC\! \smallint\! F \to \cC \ ,\qquad (c,x) \mapsto c 
$$ 
which is precofibred (in fact, cofibred) over $\cC$.  Notice that $ p^{-1}(c) = F(c) $ for any object $ c\in \cC $. Hence, by Lemma \ref{precof}, for any functor  $\,X:\,\cC\! \int\! F \to \M\,$, 
\begin{equation} 
\la{ehocolim} 
(\L p_!X)(c) \simeq {\rm hocolim}_{F(c)} [X(c)]\ ,
\end{equation}
where $X(c) := i_c^{\ast}X$ is the restriction of $X$ to $F(c)$ via the inclusion functor 
$$
i_c:\, F(c) \to \cC\! \smallint\! F\,,\qquad x \mapsto (c,x)\ ,\quad (x \stackrel{f}{\to} x') \mapsto ({\rm Id}_c, f)\ . $$
Note that, by 2-functoriality of homotopy Kan extensions (see \eqref{funct}),  \eqref{ehocolim} implies the weak equivalence
\begin{equation}
\la{Thomas}
{\rm hocolim}_{\cC\! \smallint\! F}(X)\, \simeq\, {\rm hocolim}_{c \in \cC}({\rm hocolim}_{F(c)} X(c)) 
\end{equation} 
which is known as {\it Thomason's formula} for homotopy colimits over $\cC\! \int\! F$
(see \cite[Theorem 26.8]{CS}).

An important special case arises when we apply the Grothendieck construction to a
set-valued functor $\,F: \cC \to \Set \,$, regarding sets as discrete categories
(i.e. by embedding $\, \Set \into \Cat $). In this case, the category $ \cC\! \int\! F $ is usually denoted $ \cC_F $ and called
the {\it category of elements of} $F$ as its object set $ {\rm Ob}(\cC_F) $ can be identified with $\,\coprod_{c \in \cC} F(c) \,$ (we will still write the objects of
$ \cC_F $ as pairs $ (c,x)$, where $ c \in \cC$ and $ x \in F(c)$). The Hom-sets in $ \cC_F $ are given by $\,\Hom_{\cC_F}((c,x), \,(c',x')) = \{\varphi \in {\rm Hom}_{\cC}(c,c')\,:\, F(\varphi)x = x'\}\,$ ({\it cf.} \eqref{homcf}). If we take $ \M = s\Set $ to be the category of simplicial sets (equipped with standard Quillen model structure) and apply Thomason's formula \eqref{Thomas} to the trivial diagram $ X: \cC_F \to \ast $ in $ \M $, then for any functor $\,F: \cC \to \Set \,$, we get
\begin{equation}
\la{BK}
\hocolim_{\cC}(F)\, \cong \, N_*(\cC_F)
\end{equation} 
where $ N_\ast(\cC_F) $ denotes the simplicial nerve  of the category
$ \cC_F $. Formula \eqref{BK} is known as the {\it Bousfield-Kan construction} for homotopy colimits in $ s\Set $ (see \cite{BK72}).

\subsection{Homotopy coends} 
Homotopy coends are special kinds of homotopy colimits defined for {\it bifunctors}, 
i.e. the diagrams of the form $  \cC^{\rm op} \times \cC \to \M$. There is a broader range of techniques for manipulating with such homotopy colimits, which makes them more accessible for computations. The homotopy coends are defined in terms of the so-called {\it factorization category} $\F(\cC)$ introduced by Quillen \cite{Q2}. 
It can be  described as the category of elements of the bifunctor $ {\rm Hom}:\, \cC^{\rm op} \times \cC \to {\rm Set}$ of the given 
category $\cC$:
\begin{equation} \la{twar} \F(\cC):= (\cC^{\rm op} \times \cC)\!\smallint {\rm Hom}\ . \end{equation}
We will be actually dealing with the opposite category $\F(\cC)^{\rm op}$ which can be explicitly described as follows: the objects of $\F(\cC)^{\rm op}$ are the morphisms $\{\varphi:c \to d\}$ in $\cC$, and the ${\rm Hom}$-sets are commutative squares
\begin{equation} \la{twarhom} \begin{diagram}[small] d & \lTo^{\beta} & d'\\
                                                     \uTo^{\varphi} & & \uTo_{\varphi'}\\
                                                     c & \rTo^{\alpha} & c'\\
                                                     \end{diagram}\end{equation}
i.e., ${\rm Hom}_{\F(\cC)^{\rm op}}(\varphi, \varphi')$ consists of the pairs of morphisms $(\alpha,\beta)$ in $\cC$ such that $\varphi= \beta \varphi'\alpha$, with compositions defined in the obvious way.
Note that $\, \F(\cC)^{\rm op} \not\simeq \F(\cC^{\rm op})\,$ in general.
Now, there are two natural functors
\begin{equation} \la{twarsf} s^{\rm op}:\ \F(\cC)^{\rm op} \to \cC\,,\qquad (c \stackrel{\varphi}{\to} d) \mapsto c \end{equation}
\begin{equation} \la{twartf} t^{\rm op}:\ \F(\cC)^{\rm op} \to \cC^{\rm op}\,,\qquad (c \stackrel{\varphi}{\to} d) \mapsto d \end{equation}
called the (opposite) source and target functors, respectively. We have 
\begin{lemma}[Quillen] \la{rhcof}
The functors \eqref{twarsf} and \eqref{twartf} are both right homotopy cofinal. 
\end{lemma}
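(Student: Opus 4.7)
The plan is to verify the definition directly: show that for every object $c$, the comma categories $c \downarrow s^{\rm op}$ and $c \downarrow t^{\rm op}$ have weakly contractible nerves. The two cases are entirely parallel; I describe $s^{\rm op}$ in detail.

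Unwinding the definitions from \eqref{twarhom}--\eqref{twarsf}, an object of $c \downarrow s^{\rm op}$ is a composable pair $c \xrightarrow{f} c_1 \xrightarrow{\varphi} d_1$ in $\cC$, and a morphism $(f,\varphi) \to (f',\varphi')$ is a pair $(\alpha: c_1 \to c_1',\, \beta: d_1' \to d_1)$ satisfying $\varphi = \beta\varphi'\alpha$ and $\alpha f = f'$. The key step is to introduce the \emph{collapsing endofunctor}
\[ C: c \downarrow s^{\rm op} \longrightarrow c \downarrow s^{\rm op},\qquad (f,\varphi) \mapsto ({\rm Id}_c,\, \varphi f),\qquad (\alpha,\beta) \mapsto ({\rm Id}_c,\, \beta), \]
whose functoriality follows from the identity $\varphi f = \beta\varphi'\alpha f = \beta\varphi' f'$. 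Its image $\cD$ is the full subcategory of objects of the form $({\rm Id}_c,\, \psi: c \to d)$; a short unwinding of morphisms shows $\cD \cong (c \downarrow \cC)^{\rm op}$, which is contractible because the coslice $c \downarrow \cC$ has an initial object $(c, {\rm Id}_c)$.

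Let $i: \cD \hookrightarrow c \downarrow s^{\rm op}$ denote the inclusion. Then $C \circ i = {\rm id}_\cD$ on the nose, and in the other direction I define a natural transformation $\eta: i \circ C \Rightarrow {\rm id}$ at the object $(f:c\to c_1,\varphi:c_1\to d_1)$ by
\[ \eta_{(f,\varphi)} := (f,\,{\rm Id}_{d_1}) : ({\rm Id}_c,\, \varphi f) \longrightarrow (f,\varphi), \]
with the defining relations $\varphi f = {\rm Id}_{d_1}\,\varphi\, f$ and $f\circ {\rm Id}_c = f$ being automatic. Naturality against a general morphism $(\alpha,\beta):(f,\varphi)\to(f',\varphi')$ reduces to the computation $(\alpha,\beta)\circ(f,{\rm Id}_{d_1}) = (\alpha f,\,\beta) = (f',\beta) = (f',{\rm Id}_{d_1'})\circ({\rm Id}_c,\beta)$, which uses only $\alpha f = f'$. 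Since any natural transformation induces a simplicial homotopy between nerves, $N_\ast(i)\circ N_\ast(C)$ is homotopic to the identity, so $N_\ast(C)$ is a homotopy equivalence and $N_\ast(c\downarrow s^{\rm op}) \simeq N_\ast(\cD) \simeq \ast$. Hence $s^{\rm op}$ is right homotopy cofinal.

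The argument for $t^{\rm op}$ is entirely parallel after unwinding the opposite structure: objects of $c \downarrow t^{\rm op}$ are composable pairs $c_1 \xrightarrow{\varphi} d_1 \xrightarrow{\psi} c$ in $\cC$, the collapsing functor is $(\varphi,\psi) \mapsto (\psi\varphi,\,{\rm Id}_c)$, its image is isomorphic to the slice $\cC\downarrow c$ (which has a terminal object), and the natural transformation at $(\varphi,\psi)$ is $({\rm Id}_{c_1},\,\psi): (\psi\varphi,{\rm Id}_c) \to (\varphi,\psi)$. I anticipate no serious obstacle beyond bookkeeping --- the main subtlety is finding the collapsing functor in the first place and then keeping track of the direction of $\eta$ (which runs $i\circ C \Rightarrow {\rm id}$, consistent with $\cD$ realizing the \emph{opposite} of a coslice rather than the coslice itself).
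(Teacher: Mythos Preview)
Your proof is correct. Both arguments end up identifying the comma category $c\downarrow s^{\rm op}$ with (the opposite of) the coslice $c\downarrow\cC$, but they reach that identification by different routes. The paper invokes the general machinery it has just set up: since $\F(\cC)$ is obtained by the Grothendieck construction, the projection $s\times t$ is precofibred, hence so are $s$ and $t$; this means the inclusion of each fibre into the corresponding comma category admits a left adjoint and is therefore a weak equivalence on nerves. One then simply identifies the fibres as $s^{-1}(c)=c\downarrow\cC$ and $t^{-1}(d)=(\cC\downarrow d)^{\rm op}$ and passes to opposites. Your approach bypasses this framework entirely and writes the deformation retraction down by hand: your collapsing functor $C$ is exactly (the opposite of) the left adjoint whose existence the paper deduces abstractly, and your natural transformation $\eta$ is the adjunction counit. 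The paper's version is shorter given the preceding discussion of precofibred functors and explains \emph{why} such a retraction must exist; your version is more self-contained and would stand on its own without that discussion.
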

\begin{proof}
Since $\F(\cC)$ is defined by Grothendieck construction \eqref{twar}, the canonical (forgetful) functor 
$$ s \times t: \F(\cC) \to \cC^{\rm op} \times \cC $$
is precofibred. It follows ({\it cf.} \cite[Example, p. 94]{Q2}) that both $s:\F(\cC) \to \cC^{\rm op}$ and $t:\F(\cC) \to \cC$ are precofibred. Hence the inclusions $s^{-1}(c) \hookrightarrow s\downarrow c$ and $t^{-1}(d) \hookrightarrow t \downarrow d$ induce weak equivalences of classifying spaces 
\begin{equation}\la{clsp}  B(s^{-1}(c)) \simeq B(s \downarrow c)\quad , \quad  B(t^{-1}(d)) \simeq B(t \downarrow d)\ .\end{equation}
On the other hand, by inspection, $s^{-1}(c)= c \downarrow \cC$ and $t^{-1}(d)= (\cC \downarrow d)^{\rm op}$ are the slice and coslice categories respectively. Since both $c \downarrow \cC$ and $(\cC \downarrow d)^{\rm op}$ have initial objects, they are contractible for all $c,d \in \cC$. To complete the proof it remains to note that $(c \downarrow s^{\rm op}) = (s \downarrow c)^{\rm op}$ and $(d \downarrow t^{\rm op}) = (t \downarrow d)^{\rm op}$, where $s^{\rm op}$ and $t^{\rm op}$ are the functors \eqref{twarsf} and \eqref{twartf}. Hence 
$$
B(c \downarrow s^{\rm op}) \,=\, B(s \downarrow c)^{\rm op} \simeq B(s \downarrow c) \simeq  B(s^{-1}(c)) \simeq {\rm pt}
$$
and similarly $B(d \downarrow t^{\rm op}) \simeq {\rm pt} $. This shows that $s^{\rm op}$ and $t^{\rm op}$ are right homotopy cofinal.
\end{proof}
In view of Lemma \ref{rhcof}, the Cofinality Theorem gives two natural weak equivalences 
\begin{equation} \la{sustar} s^{\ast}: {\rm hocolim}_{\F(\cC)^{\rm op}}(s^\ast Y) \stackrel{\sim}{\to} {\rm hocolim}_{\cC}(Y)
\end{equation}
\begin{equation} \la{tustar} t^{\ast}: {\rm hocolim}_{\F(\cC)^{\rm op}}(t^\ast X) \stackrel{\sim}{\to} {\rm hocolim}_{\cC^{\rm op}}(X) 
\end{equation}
for any diagrams $X:\cC^{\rm op} \to \M$ and $Y:\cC \to \M$. These equivalences can be used to express arbitrary
homotopy colimits over $ \cC$ and $ \cC^{\rm op} $ as homotopy coends which we introduce next. Set
$$ \pi^{\rm op}:=  t^{\rm op} \times s^{\rm op}: \F(\cC)^{\rm op} \to \cC^{\rm op} \times \cC$$
and for a bifunctor $D:\cC^{\rm op} \times \cC \to \M$, define its {\it homotopy coend} by
\begin{equation} \la{hcoend} \int^{c \in \cC}_{\L} D(c,c) \,:=\, {\rm hocolim}_{\F(\cC)^{\rm op}}(\pi^{\ast}D)\,,\end{equation}
where $\pi^{\ast}D := D \circ \pi^{\rm op}: \F(\cC)^{\rm op} \to \M$.  This is indeed the (left) derived functor of the classical coend functor which is usually denoted 
\begin{equation} \la{coend} \int^{c \in \cC} D(c,c) \,:=\, {\rm colim}_{\F(\cC)^{\rm op}}(\pi^{\ast}D)\ .\end{equation}
The notation \eqref{hcoend} is very convenient as it suggests the analogy with (definite) integrals in Calculus. For example, for a bifunctor $D:(\cA \times \cB)^{\rm e} \to M$ defined on a product of two small categories $(\cA \times \cB)^{\rm e} := \cA^{\rm op} \times \cB^{\rm op} \times \cA \times \cB$ there is a natural weak equivalence
$$\int^{(a,b) \in \cA \times \cB}_{\L} D(a,b;a,b) \simeq \int^{a \in \cA}_{\L} \int^{b \in \cB}_{\L} D(a,b;a,b) $$
which is analogous to the classical Fubini Theorem in Calculus (and thus called the Fubini Theorem for homotopy coends). Another useful formula that we will need is 
\begin{equation} \la{lfhcoend} \int_{\L}^{c \in \cC} \L F [D(c,c)] \,\simeq\, \L F\left[\int_{\L}^{c \in \cC} D(c,c)\right] \end{equation}
where $F$ is a left Quillen functor between model categories. This formula is a consequence of a more general
(well-known) result that the derived functors of left Quillen functors preserve homotopy colimits (for a short proof, see e.g. \cite[Proposition 3.15]{Wer}). 

We are now in a position to state the main result of this section.
\begin{proposition}[Shapiro Lemma for model categories] 
\la{ShLemma}
Let $\M$ be a model category, $\cC$ a small category, and $F:\cC \to {\rm Set}$ any set-valued functor on $\cC$. Then, for any contravariant diagram $X:\,\cC^{\rm op} \to \M$, 
\begin{equation}
\la{sheq}
{\rm hocolim}_{\cC_F^{\rm op}}(p^\ast X)\, \simeq\, \int_{\L}^{c \in \cC} X(c) \otimes F(c) 
\end{equation}
where $\cC_F$ is the category of elements of $F$, and ``$\,\otimes$'' denotes the natural $($tensor$)$ action\footnote{That is, $\otimes$ is the bifunctor $\M \times {\rm Set} \to \M$ defined by $A \otimes S = \coprod_S A$, where $\coprod_S A$ is the coproduct of copies of $A$ indexed by the elements of $S$.} of $\,{\rm Set}$ on $\M$.
\end{proposition}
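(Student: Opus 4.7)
The plan is to combine the classical density (co-Yoneda) theorem, the Yoneda lemma for coends, and the commutation of homotopy coends with homotopy colimits, so as to reduce the statement to the case where $F$ is representable.

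\textbf{Step 1: Homotopy-colimit presentation of $F$.} First I would establish that the canonical density isomorphism $F \cong \colim_{(c_0,x_0)\in \cC_F^{\rm op}} \cC(c_0,-)$ in $[\cC,\Set]$ upgrades to a weak equivalence
$$F \,\simeq\, \hocolim_{(c_0,x_0)\in \cC_F^{\rm op}} \cC(c_0,-) \qquad\text{in } [\cC,\sSet].$$
For each fixed $c\in\cC$, the Bousfield--Kan formula \eqref{BK} identifies $\hocolim_{\cC_F^{\rm op}} \cC(p(-),c)$ with the nerve of the category of elements of the $\Set$-valued diagram $(c_0,x_0)\mapsto \cC(c_0,c)$. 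The evaluation map $(c_0,x_0,f)\mapsto F(f)(x_0)\in F(c)$ decomposes this category as a disjoint union $\coprod_{x\in F(c)} \cD_x$, and in each component $\cD_x$ the triple $(c,x,\id_c)$ is easily checked to be an initial object. Hence each $\cD_x$ is contractible, its nerve collapses to a point, and the assembled map $\hocolim_{\cC_F^{\rm op}} \cC(p(-),c)\to F(c)$ is a weak equivalence, natural in $c$.

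\textbf{Step 2: Commutation and Yoneda.} Substituting this presentation into the homotopy coend and using that homotopy coends, being built from homotopy colimits (cf.\ \eqref{hcoend}, \eqref{lfhcoend}), commute with homotopy colimits in each argument, I would obtain
$$\int_{\L}^{c\in\cC} X(c)\otimes F(c) \,\simeq\, \hocolim_{(c_0,x_0)\in\cC_F^{\rm op}}\,\int_{\L}^{c\in\cC} X(c)\otimes \cC(c_0,c).$$
The derived Yoneda lemma for coends then yields $\int_{\L}^{c} X(c)\otimes \cC(c_0,c) \simeq X(c_0)$: the representable $\cC(c_0,-)$ is projectively cofibrant in $[\cC,\Set]$, so tensoring over $\cC$ with it already preserves weak equivalences and coincides with its derived version, which by the classical coend Yoneda equals $X(c_0)$. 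Putting the two ingredients together produces
$$\int_{\L}^{c\in\cC} X(c)\otimes F(c) \,\simeq\, \hocolim_{(c_0,x_0)\in\cC_F^{\rm op}} X(c_0) \,=\, \hocolim_{\cC_F^{\rm op}} p^{\ast}X,$$
which is the asserted equivalence.

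\textbf{Main obstacle.} The crux of the argument is Step~1: upgrading the ordinary density isomorphism to a bona fide homotopy-colimit decomposition of $F$. This rests on the combinatorial verification that each piece $\cD_x$ admits the initial object $(c,x,\id_c)$ (and is thus contractible), together with the naturality of the resulting equivalence in $c$, so that the substitution performed in Step~2 is legitimate at the level of bifunctors $\cC^{\rm op}\times\cC\to\M$. A secondary technical point is reconciling the $\Set$-tensoring appearing in the statement with the $\sSet$-tensoring implicit in combining homotopy colimits with homotopy coends; this is routine and follows from the standard compatibility of the two tensorings on a cofibrantly generated model category $\M$ via the inclusion $\Set\hookrightarrow\sSet$.
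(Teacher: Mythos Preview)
Your approach is correct but takes a genuinely different route from the paper. The paper works entirely within the factorization (twisted arrow) categories: it shows that the induced functor $\F(p)^{\rm op}\colon \F(\cC_F)^{\rm op}\to\F(\cC)^{\rm op}$ is precofibred (this is Lemma~\ref{nprcof}, verified by a direct combinatorial computation of the fibres and comma categories), then uses Lemma~\ref{precof} to identify the derived pushforward $\L f_!(t^*p^*X)$ with $\pi^*(X\otimes F)$ objectwise, and concludes via 2-functoriality \eqref{funct}. You instead resolve $F$ by representables and reduce to derived co-Yoneda. Your method is more conceptual and in line with the standard ``resolve by projectives'' philosophy; the paper's is more self-contained within the framework of Section~\ref{S2} and never needs to speak of an $\sSet$-action on $\M$. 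On that last point, your ``secondary technical point'' deserves slightly more care: cofibrantly generated does not imply simplicial, so ``compatibility of the two tensorings'' is not automatic. The clean fix bypasses $\sSet$-tensoring altogether: the objectwise equivalence $\hocolim_{(c_0,x_0)} X(d)\otimes\cC(c_0,c)\simeq X(d)\otimes F(c)$ follows from Thomason's formula \eqref{Thomas} applied to the Grothendieck fibration $(\cC_F^{\rm op})_G\to\cC_F^{\rm op}$ (with the constant diagram at $X(d)$ upstairs), combined with the initial-object structure of the components $\cD_y$ you already established in Step~1 and cofinality; the derived Yoneda step is the special case $F=\cC(c_0,-)$, where $\cC_F=c_0\!\downarrow\!\cC$ has an initial object.
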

For the proof of Proposition \ref{ShLemma}, we need the following observation.
\begin{lemma} \la{nprcof} 
For any set-valued functor $F:\cC \to {\rm Set}$, the functor $ \F(p)^{\rm op}: \F(\cC_F)^{\rm op} \to \F(\cC)^{\rm op}$ induced by the canonical projection $p: \cC_F \to \cC$ is precofibred.
\end{lemma}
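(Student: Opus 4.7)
The plan is to unpack all the definitions and then write down an explicit left adjoint to the comparison functor $j:(\F(p)^{\rm op})^{-1}(\varphi)\to \F(p)^{\rm op}\!\downarrow\!\varphi$ using the functorial action of $F$. Since the notion of ``precofibred'' is pointwise, I would fix an object $\varphi:c\to d$ of $\F(\cC)^{\rm op}$ and work with that fibre and comma category.

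First, I would identify the fibre $(\F(p)^{\rm op})^{-1}(\varphi)$. An object over $\varphi$ is a $\cC_F$-morphism whose underlying $\cC$-arrow is exactly $\varphi$; such an arrow is of the form $\tilde\varphi_{x}:(c,x)\to (d,F(\varphi)x)$, and is determined by the choice of $x\in F(c)$. The morphisms in the fibre are pairs $(\alpha,\beta)$ of $\cC_F$-arrows that project to $(\mathrm{Id}_c,\mathrm{Id}_d)$; but since identities in $\cC_F$ force the $F$-labels of source and target to coincide, these must be identities in $\cC_F$ as well. Thus the fibre is the discrete category on the set $F(c)$.

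Next, I would analyse the comma category $\F(p)^{\rm op}\!\downarrow\!\varphi$. An object is a pair $\bigl(\tilde\psi:(a,u)\to(b,v),\;(\alpha,\beta)\bigr)$ with $\alpha:a\to c$, $\beta:d\to b$ and $\psi=\beta\varphi\alpha$ (where $\psi=p(\tilde\psi)$); note also $v=F(\psi)u$. Define the candidate left adjoint
\[
L\bigl(\tilde\psi,(\alpha,\beta)\bigr)\;:=\;\tilde\varphi_{F(\alpha)u}\in(\F(p)^{\rm op})^{-1}(\varphi).
\]
To check functoriality, observe that if $(\gamma,\delta):(\tilde\psi,(\alpha,\beta))\to(\tilde\psi',(\alpha',\beta'))$ is a morphism in the comma category, then $\alpha'\gamma=\alpha$ and $F(\gamma)u=u'$, whence $F(\alpha')u'=F(\alpha)u$; so $L$ sends this morphism to the identity of a common object, and is well-defined on arrows.

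Finally, I would verify the adjunction $\mathrm{Hom}_{\rm fibre}(L(-),-)\cong \mathrm{Hom}_{\rm comma}(-,j(-))$. A morphism in the comma category from $(\tilde\psi,(\alpha,\beta))$ to $j(\tilde\varphi_y)=(\tilde\varphi_y,(\mathrm{Id}_c,\mathrm{Id}_d))$ is a pair $(\gamma,\delta):\tilde\psi\to\tilde\varphi_y$ in $\F(\cC_F)^{\rm op}$ with underlying $\cC$-arrows $(\gamma,\delta)=(\alpha,\beta)$; the $\cC_F$-constraints $F(\gamma)u=y$ and $F(\delta)y=v$ reduce to the single equation $y=F(\alpha)u$ (the second equation being automatic from $v=F(\psi)u=F(\beta\varphi\alpha)u$). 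Hence there is exactly one such morphism when $y=F(\alpha)u$ and none otherwise, and the natural unit is the morphism $(\alpha,\beta):\tilde\psi\to \tilde\varphi_{F(\alpha)u}$. This is the required universal property, so $L\dashv j$ and $\F(p)^{\rm op}$ is precofibred.

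There is no real obstacle here beyond careful bookkeeping: the only subtle point is remembering that an object of $\F(\cC_F)^{\rm op}$ is determined by a $\cC$-arrow together with an $F$-label of its source (the target label being forced), which is what makes the fibre discrete and makes $L$ land automatically in the fibre.
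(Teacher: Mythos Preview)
Your proof is correct and follows essentially the same approach as the paper: you identify the fibre over $\varphi$ with the discrete category $F(c)$, define the left adjoint by $(\tilde\psi,(\alpha,\beta))\mapsto \tilde\varphi_{F(\alpha)u}$, and verify the adjunction by the same Hom-set computation. The paper's argument differs only in notation (writing $\Phi$ for your $L$ and $[\psi,z,\alpha,\beta]$ for your $(\tilde\psi,(\alpha,\beta))$).
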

\begin{proof}
The proof is by direct verification.
To simplify the notation we set $ f := \F(p)^{\rm op} $ and describe first the fibre 
category $f^{-1}(\varphi)$ for $(\varphi: c \to d) \in \F(\cC)^{\rm op}$. The objects of $f^{-1}(\varphi)$ are the morphisms in $\cC_F$ of the form $(c,x) \stackrel{\varphi}{\longrightarrow} (d,y) $ such that $y=F(\varphi)(x)$. We will write the object $(c,x) \stackrel{\varphi}{\longrightarrow} (d,F(\varphi)(x))$ of $ \F(\cC_F)^{\rm op}$ as $(\varphi, x)$. Thus, 
$${\rm Ob}(f^{-1}(\varphi))\,=\,\{ (\varphi, x): x \in F(c)\}\ . $$
Further, the morphisms $(\varphi,x) \to (\varphi,y)$ in $f^{-1}(\varphi)$ are precisely the morphisms in $\F(\cC_F)^{\rm op}$, i.e., commutative diagrams of the form
$$ \begin{diagram}[small]
(d, F(\varphi)(x)) & \lTo^{\beta} & (d, F(\varphi)(y))\\
  \uTo^{\varphi} & & \uTo_{\varphi}\\
 (c,x) & \rTo^{\alpha}& (c, y)\\ 
\end{diagram}
$$
mapped to the identity by $f$. This last condition implies that $\alpha={\rm Id}_c$ and $\beta={\rm Id}_d$. Hence,
$${\rm Hom}_{f^{-1}(\varphi)}\big((\varphi, x), (\varphi, y)\big) = \begin{cases}
                                                                         \{{\rm Id}\} \quad & \text{if } x=y\\
                                                                         \varnothing \quad  & \text{ otherwise}
                                                                       \end{cases}\ .$$
Hence, $f^{-1}(\varphi) \cong F(c)$, where the set $F(c)$ is viewed as a discrete category. 

Next, for $(\varphi:c \to d) \in \F(\cC)^{\rm op}$, we have
$${\rm Ob}(f \downarrow \varphi) = \{ [\psi: k \to l, z, \alpha,\beta]: (\psi,z) \in \F(\cC_F)^{\rm op}, (\alpha,\beta) \in {\rm Hom}_{\F(\cC){\rm op}}(\psi, \varphi) \}\ .$$
The morphisms $[\psi, z,\alpha,\beta] \to [\psi', z', \alpha',\beta']$ in $f \downarrow \varphi$ are the commutative diagrams in $\cC_F$ of the form 
$$ 
\begin{diagram}[small]
 (l, F(\psi)(z)) & \lTo^{\delta} & (l', F(\psi')(z'))\\
    \uTo^{\psi} & & \uTo_{\psi'}\\
 (k,z) & \rTo^{\gamma}& (k',z')\\    
\end{diagram}
$$
such that the diagram 
$$ 
\begin{diagram}[small]
l & & \lTo^{\delta} & & l'\\
& \luTo^{\beta} & & \ruTo^{\beta'} & \\
\uTo^{\psi} & &  d & & \uTo_{\psi'} \\
 & & \uTo^{\varphi} & & \\
k &  \rTo^{\gamma}& \HonV & & k'\\
& \rdTo_{\alpha} & \vLine & \ldTo_{\alpha'} &\\
& & c & & \\
\end{diagram}
$$
commutes in $\cC$. Hence, a morphism $[\psi, z, \alpha,\beta] \to [\varphi, x, {\rm Id}_c, {\rm Id}_d]$ in $f \downarrow \varphi$ is a commutative diagram of the form 
$$ 
\begin{diagram}[small]
(l,F(\psi)(z)) & \lTo^{\beta} & (d,F(\varphi)(x))\\
 \uTo^{\psi} & & \uTo_{\varphi}\\
(k, z) & \rTo^{\alpha} & (c,x)\\ 
\end{diagram}
$$
Such a diagram exists if and only if $x=F(\alpha)(z)$, in which case it is unique. Hence, 
$${\rm Hom}_{f \downarrow \varphi}([\psi,z,\alpha,\beta], [\varphi, x,{\rm Id}_c,{\rm Id}_d]) \,=\, 
  \begin{cases}
  \{(\alpha,\beta)\} & \text{ if } x=F(\alpha)(z)\\
   \varnothing  & \text{ otherwise} \ .
  \end{cases}$$
Here, $(\alpha,\beta)$ is viewed as a morphism $(\psi, z) \to (\varphi, x)$ in $\F(\cC_F)^{\rm op}$ rather then $\F(\cC)^{\rm op}$. 

Next, consider the assignment 
$$\Phi: f \downarrow \varphi \to f^{-1}(\varphi)\,,\qquad [\psi,z,\alpha,\beta] \mapsto (\varphi,F(\alpha)(z))\ .  $$
If $(\gamma,\delta):(\psi,z) \to (\psi',z')$ is a morphism in $f \downarrow \varphi$, then $z'=F(\gamma)(z)$ and $\alpha' \circ \gamma = \alpha$. Hence, letting $\Phi$ map $(\gamma,\delta)$ to the identity on $ (\varphi,F(\alpha)(z))$ makes $\Phi$ a {\it functor}. We then note that 
$${\rm Hom}_{f^{-1}(\varphi)}(\Phi([\psi,z,\alpha,\beta]), (\varphi,x)) \,=\, {\rm Hom}_{f^{-1}(\varphi)}((\varphi,F(\alpha)(z)),(\varphi,x)) = \begin{cases}
                                                                 \{{\rm Id}\} & \text{ if } x=F(\alpha)(z)\\
                                                                  \varnothing  & \text{ otherwise}
                                                              \end{cases} \ .$$
Hence, there is a natural bijection
$${\rm Hom}_{f^{-1}(\varphi)}(\Phi([\psi,z,\alpha,\beta]), (\varphi,x))  \cong {\rm Hom}_{f \downarrow \varphi}([\psi,z,\alpha,\beta], [\varphi,x,{\rm Id}_c,{\rm Id}_d]) \,,$$
showing that $\Phi$ is left adjoint to the canonical inclusion 
$$f^{-1}(\varphi) \hookrightarrow f \downarrow \varphi\,,\qquad (\varphi,x) \mapsto [\varphi,x,{\rm Id}_c,{\rm Id}_d ]\ . $$
This shows that $f$ is precofibred, as desired.
\end{proof}
\begin{proof}[Proof of Proposition \ref{ShLemma}]
By formula \eqref{tustar} (applied to the category $\cC_F$), there is a natural weak equivalence
$$ t^{\ast}: {\rm hocolim}_{\F(\cC_F)^{\rm op}}(t^\ast p^{\ast} X) \,\xrightarrow{\sim} \,{\rm hocolim}_{\cC_F^{\rm op}}(p^{\ast} X) $$
where 
$$ 
t^{\ast}p^{\ast}X: \  
\F(\cC_F)^{\rm op}\, \xrightarrow{t^{\rm op}}\, \cC_F^{\rm op}\, 
\xrightarrow{p^{\rm op}} \, \cC^{\rm op} \,\xrightarrow{X}\,  \M\ .
$$ 
On the other hand, by definition \eqref{hcoend}, 
$$\int_{\L}^{c \in \cC} X(c) \otimes F(c)\, =\, {\rm hocolim}_{\F(\cC)^{\rm op}}[\pi^{\ast}(X \otimes F)] $$
where 
$$ \pi^{\ast}(X \otimes F):\  \F(\cC)^{\rm op} \,\xrightarrow{\pi^{\rm op}}\, 
\cC^{\rm op} \times \cC  \,\xrightarrow{X \times F} \, \M \times {\rm Set} \,\xrightarrow{\otimes} \, \M\ .
$$ 
To prove the desired proposition we thus need to show that
\begin{equation} \la{comphocolim1} {\rm hocolim}_{\F(\cC_F)^{\rm op}}(t^\ast p^{\ast} X) \,\simeq\, {\rm hocolim}_{\F(\cC)^{\rm op}}[\pi^{\ast}(X \otimes F)] \end{equation}
By Theorem \ref{CisT}(1) (see \eqref{funct}), it suffices to show that there is an weak equivalence  of $\F(\cC)^{\rm op}$-diagrams
\begin{equation} 
\la{lfsh} 
\L f_!(t^\ast p^{\ast} X)\, \simeq\, \pi^{\ast}(X \otimes F)  \, ,
\end{equation}
where $ f: \, \F(\cC_F)^{\rm op} \to \F(\cC)^{\rm op} $ is the functor induced by the canonical projection $p: \cC_F \to C\,$.
Thanks to Lemma \ref{nprcof}, we can use Lemma \ref{precof} to evaluate the homotopy Kan extension in  \eqref{lfsh} in terms of homotopy colimits over fibre categeories.
Specifically, for any $\varphi \in \F(\cC)^{\rm op}$, we have 
$$ \L f_!(t^\ast p^{\ast} X)(\varphi)\, \simeq\, {\rm hocolim}_{f^{-1}(\varphi)}(i^{\ast}t^\ast p^{\ast} X)$$
where $i:f^{-1}(\varphi) \hookrightarrow \F(\cC_F)^{\rm op}$. In the proof of Lemma \ref{nprcof}, we have described the fibre category $f^{-1}(\varphi)$: namely, $\,f^{-1}(\varphi)$ is isomorphic to 
the discrete category $ F(c) $ for any $(\varphi: c \to d) \in \F(\cC)^{\rm op}$. 
Now, since both sides of \eqref{sheq} are invariant
under weak equivalences in $ \M^{\cC^{\rm op}}\!$, we may assume that $X$ is an objectwise cofibrant diagram in $\M$. Then, since $\,
i^\ast t^\ast p^{\ast} X = i^\ast f^{\ast} t^{\ast} X = (fi)^{\ast} t^{\ast} X = t^{\ast}X(\varphi) = X(d)
\,$, we have
$$ 
{\rm hocolim}_{f^{-1}(\varphi)}(i^{\ast}t^\ast p^{\ast} X) \simeq \coprod_{F(c)} X(d)\, 
$$ 
which is precisely the value of $\,\pi^{\ast}(X \otimes F)\,$ at $\varphi$. Thus, $\L f_!(t^\ast p^{\ast} X)\varphi \simeq \pi^{\ast}(X \otimes F))\varphi $ in $\M$ for all $\varphi \in \F(\cC)^{\rm op}$. By Theorem \ref{CisT}(2), this implies \eqref{lfsh}. Summing up, we have constructed the pullback-pushforward diagram
$$ 
\begin{diagram}[small]
    & & {\rm hocolim}_{\F(\cC_F)^{\rm op}}(t^{\ast} p^{\ast} X) & & \\
   & \ldTo^{t^{\ast}} &    &  \rdTo^{\L f_!}  & \\
 {\rm hocolim}_{\cC_F^{\rm op}}(p^{\ast}X) & & & &  {\rm hocolim}_{\F(\cC)^{\rm op}}[\pi^{\ast}(X \otimes F)]\\
\end{diagram}
$$
each arrow in which is a weak equivalence. This shows that the objects in both sides of \eqref{sheq} 
are weakly equivalent in $\M$ as claimed by the proposition.
\end{proof}

In the special case, if we take $ \M = \Ch(\Mod_k) $ to be the category of chain complexes of $k$-modules equipped with standard projective model structure (see \cite[2.3.11]{Hov}), Proposition~\ref{ShLemma} implies the following classical result in homological algebra.
\begin{lemma}[Shapiro Lemma]
\la{Shap} Let $k$ be a commutative ring, $\cC$ a small category, and 
$ \Mod_k(\cC^{\rm op}) $ the $($abelian$)$ category of $\,\cC^{\rm op}$-diagrams of $k$-modules. Then, for any $ X \in \Mod_k(\cC) $ and any set-valued functor $ F:\,\cC \to \Set $, there is a natural isomorphism
$${\rm Tor}^{\cC_F}_\ast(p^{\ast}X, k) \,\cong \, {\rm Tor}^{\cC}_\ast(X, k[F]) $$
where $\,k[F]:\ \cC\, \xrightarrow{F}\, {\rm Set}\, \xrightarrow{k[\mbox{--}\,]}\, {\rm Mod}_k$
is the $k$-linear functor generated by $F$.
\end{lemma}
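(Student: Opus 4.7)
The plan is to apply Proposition \ref{ShLemma} to the model category $\M = \Ch(\Mod_k)$ with its standard projective model structure, and to translate the resulting weak equivalence of chain complexes into the claimed isomorphism of Tor-groups via the standard dictionary between homotopy colimits of diagrams in $\Ch(\Mod_k)$ and derived functors on $k$-linearized diagram categories.

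First, I would view any $X \in \Mod_k(\cC^{\rm op})$ as a diagram of chain complexes concentrated in degree zero, so that $p^{\ast} X$ likewise defines an object of $\Ch(\Mod_k)^{\cC_F^{\rm op}}$. The tensor action of $\Set$ on $\Ch(\Mod_k)$ appearing in \eqref{sheq} is $A \otimes S = \bigoplus_{s \in S} A \cong A \otimes_k k[S]$, so the bifunctor $X \otimes F:\cC^{\rm op} \times \cC \to \Ch(\Mod_k)$ agrees with $(c,c') \mapsto X(c) \otimes_k k[F(c')]$, assembled from $X$ and the $k$-linearized functor $k[F]:\cC \to \Mod_k$.

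Second, I would invoke the two standard identifications
$$
\H_{\ast}\!\bigl(\hocolim_{\cD^{\rm op}} Y\bigr) \,\cong\, \Tor^{\cD}_{\ast}(Y,\, k_\cD)\,,
\qquad
\H_{\ast}\!\biggl(\int^{c \in \cC}_{\L}\! A(c) \otimes_k B(c)\biggr) \,\cong\, \Tor^{\cC}_{\ast}(A, B)\,,
$$
valid for any right $\cD$-module $Y$, right $\cC$-module $A$, and left $\cC$-module $B$ (with $k_\cD$ the constant functor of value $k$ on $\cD$). These follow from the facts that (i) $\colim_{\cD^{\rm op}} Y = Y \otimes_\cD k_\cD$ as an ordinary coend, (ii) the classical coend $A \otimes_\cC B$ is the undecorated version of \eqref{hcoend} and its derived functor is the homotopy coend, and (iii) projective-cofibrant replacement in the model category $\Ch(\Mod_k)^{\cD^{\rm op}}$ restricts to a projective resolution of $Y$ in the abelian category $\Mod_k(\cD^{\rm op})$.

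Finally, taking $\H_{\ast}$ of both sides of \eqref{sheq} and applying the first identification with $\cD = \cC_F$, $Y = p^{\ast} X$ to the left and the second identification with $A = X$, $B = k[F]$ to the right yields the asserted isomorphism $\Tor^{\cC_F}_{\ast}(p^{\ast} X,\, k) \cong \Tor^{\cC}_{\ast}(X,\, k[F])$. The argument is essentially formal; the only obstacle worth flagging is the bookkeeping required to reconcile co/contravariance conventions (interpreting $X$ as a $\cC^{\rm op}$-diagram so that $p^{\ast}$ on diagrams matches the pullback along $p:\cC_F \to \cC$) and to verify that the $\Set$-action of Proposition \ref{ShLemma} agrees with $k$-linearization, as noted in the first step.
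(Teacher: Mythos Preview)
Your proposal is correct and follows exactly the approach indicated in the paper: the paper simply states that Lemma~\ref{Shap} is the special case of Proposition~\ref{ShLemma} with $\M = \Ch(\Mod_k)$ (projective model structure), leaving the translation from homotopy colimits/coends to $\Tor$-groups implicit. You have spelled out precisely those translation steps --- identifying the $\Set$-tensor action with $k$-linearization and recognizing $\H_\ast(\hocolim)$ and $\H_\ast$ of the homotopy coend as the relevant $\Tor$ functors --- which is all that is needed.
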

Lemma~\ref{Shap} appears in \cite[Appendix C.12]{L}, where it is proved 
in the special case $ X = k $ the trivial module (the constant $\,\cC^{\rm op}$-diagram valued at $k$); in the general form, it is stated, for example, in \cite{Dja19}.


\section{Representation and Cyclic Homology of Homotopy Simplicial Groups}
\la{S3}
In this section, we define representation homology of groups with coefficients in a commutative Hopf algebra $\cH$, following the approach of \cite{BRYII, BRYI}. Taking $\cH=\cO(G)$, where $G$ is an 
affine algebraic group, we then construct the derived character maps for $G$-representations of $\Gamma$. In the case when $ G = \GL_n $, these maps specialize to the character maps \eqref{Tr*} announced in the Introduction. We will work with {\it homotopy} simplicial groups (in the sense of Badzioch \cite{Ba02}), which is a more general and flexible setting than that of the usual (strict) simplicial groups used in \cite{BRYII,BRYI}. 
In Section~\ref{S3.1}, we define the classifying spaces for such groups, and in Section~\ref{S3.3},
the cyclic bar construction and cyclic homology, both of which may be of independent interest. 
We begin by reviewing the main results of \cite{Ba02} specializing to the algebraic theory of (discrete) groups.
\subsection{Homotopy simplicial groups}
\la{S3.1}
Let $\ffgr$ denote the small category whose objects $\langle n \rangle$ are the finitely generated free groups $\mathbb{F}_n = \bF\langle x_1, x_2, \ldots, x_n \rangle$, one for each $ n \ge 0 $ (with convention that $ \langle 0 \rangle $ is the trivial group), and the morphisms are arbitrary group homomorphisms. Every discrete group $\Gamma$ defines a contravariant functor $\underline{\Gamma}: \ffgr^{\rm op} \to {\rm Set}$, $\langle n \rangle \mapsto \Gamma^n$, which is simply the restriction of the Yoneda functor ${\rm Hom}(\mbox{--},\Gamma):\Gr^{\rm op} \to {\rm Set}$ to $\ffgr \subset \Gr$. More generally, every simplicial group $\Gamma \in \sGr$ (i.e. a simplicial object in $\Gr$) defines a functor 
\begin{equation} \la{s3e1} \underline{\Gamma}: \ffgr^{\rm op} \to \sSet\,,\qquad \langle n \rangle \mapsto \Gamma^n \,,\end{equation}
where $\Gamma^n$ denotes the product of $n$ copies of the underlying simplicial set of $\Gamma$ in the category $\sSet$. The functors \eqref{s3e1} can be characterized by the property of being product-preserving. To make it precise, observe that the category $\ffgr$ carries a (strict) monoidal structure $\amalg: \ffgr \times \ffgr \to \ffgr$ given by the coproduct (free product) of free groups: $\langle n \rangle \amalg \langle m \rangle = \langle n+m \rangle$. The opposite category $\ffgr^{\rm op}$ is thus equipped with the dual monoidal structure which we simply denote by $\Pi: \ffgr^{\rm op} \times \ffgr^{\rm op} \to \ffgr^{\rm op}$. Every object $\langle n \rangle^{\rm o} \in \ffgr^{\rm op}$ comes equipped with $n$ natural projections:
\begin{equation} \la{s3e2} p_{n,k}: \ \langle n \rangle^{\rm o} \to \langle 1 \rangle^{\rm o}\,,\qquad 1 \leqslant k \leqslant n\,, \end{equation}
that correspond to the canonical inclusions 
$ i_{n,k}:\, \langle 1 \rangle \hookrightarrow \langle n \rangle$, $\,x_1 \mapsto x_k $, in $\ffgr$. We say that a functor $\F: \ffgr^{\rm op} \to \sSet$ is {\it product-preserving} if the maps induced by \eqref{s3e2}
\begin{equation} 
\la{s3e3} 
\F(p_n):= \prod_{k=1}^n \F(p_{n,k}):\ \F \langle n \rangle \,\to\, (\F\langle 1 \rangle)^{n} 
\end{equation} 
are isomorphisms in $\sSet$ for all $n \geqslant 0$. It is easy to show that assigning to a simplicial group $\Gamma \in \sGr$ the functor \eqref{s3e1} defines an equivalence of categories 
\begin{equation} \la{s3e4} \sGr \xrightarrow{\sim} \sSet^{\ffgr^{\rm op}}_{\otimes} \end{equation}
where $\sSet^{\ffgr^{\rm op}}_{\otimes}$ denotes the full subcategory of product-preserving functors in the diagram category $\sSet^{\ffgr^{\rm op}}$. We will use \eqref{s3e4} to identify $\sGr=\sSet^{\ffgr^{\rm op}}_{\otimes}$, thus regarding the simplicial groups as functors of the form \eqref{s3e1}. Now, the homotopy simplicial groups are obtained by replacing the assumption that the maps \eqref{s3e3} are isomorphisms in $\sSet $ with that of being weak equivalences, which is a more natural condition from the point of view of homotopy theory. Precisely, 
\begin{definition}[Badzioch \cite{Ba02}] \la{hsgr}
A {\it homotopy simplicial group} is a functor $\F:\,\ffgr^{\rm op} \to \sSet$ that is weakly product-preserving in the sense that the maps \eqref{s3e3} are weak equivalences in $\sSet$ for all $n \geqslant 0$ (with convention that $\F\langle 0 \rangle \simeq {\rm pt}$).
\end{definition}

The category of homotopy simplicial groups (i.e. the full subcategory of all weakly product-preserving functors in $\sSet^{\ffgr^{\rm op}}$) does not carry any model structure as it is not closed under colimits. Instead, as suggested in \cite{Ba02}, one can put a new model structure on the diagram category $\sSet^{\ffgr^{\rm op}}$ in which the homotopy simplicial groups are
exhibited as fibrant objects ({\it cf.} \cite[Proposition 5.5]{Ba02}). We call this model structure the {\it Badzioch model structure} and denote it by $\sGr^{h}$. To be precise, $\sGr^h$ is defined by localizing (i.e. taking the left Boufield localization of) the standard projective model structure on $\sSet^{\ffgr^{\rm op}}$ with respect to the collection of maps $S=\{i_n: \amalg_{k=1}^n {\rm Hom}_{\ffgr}(\mbox{--},\langle 1\rangle) \to {\rm Hom}_{\ffgr}(\mbox{--},\langle n\rangle)\}_{n \ge 0}$ induced by the inclusions  $i_{n,k}: \,\langle 1\rangle \to \langle n \rangle$:
$$\sGr^h\,:=\,\mathcal{L}_S(\sSet^{\ffgr^{\rm op}})\ .  $$ 
By definition, the underlying category of $ \sGr^h $ is that of $\sSet^{\ffgr^{\rm op}}$ but the class of
weak equivalences in $ \sGr^h $ --- called {\it $S$-local weak equivalences} --- is larger: it includes the set $S$ in addition to all (objectwise) weak equivalences of diagrams in  $\sSet^{\ffgr^{\rm op}}\!\!$.
There is a canonical localization functor $\cL_S: \sSet^{\ffgr^{\rm op}} \to \sGr^h$ that takes a diagram $\Gamma \in \sSet^{\ffgr^{\rm op}}$ to its functorial fibrant replacement in the model structure $\sGr^h$. In this way, one can make any diagram in $\sSet^{\ffgr^{\rm op}}$ a homotopy simplicial group. On the other hand, the model category of (strict) simplicial groups $\sGr$ is related to $\sGr^h$ by a Quillen adjunction:
\begin{equation} 
\la{s3e5} 
K:\,\sGr^h \rightleftarrows \sGr\,:J 
\end{equation}
which is obtained by localizing (at $S$) the Quillen adjunction $\, K: \sSet^{\ffgr^{\rm op}} \rightleftarrows \sGr :J \,$ between $\sGr$ and the model category of all diagrams $ \sSet^{\ffgr^{\rm op}}$. In particular, the right adjoint functor in \eqref{s3e5} is given by the inclusion $\,J(\Gamma) = \underline{\Gamma}\,$ (see \eqref{s3e1}), while the left adjoint --- called the Badzioch rigidification functor --- 
is described explicitly in Lemma~\ref{LKformula} below.
Now, the main result of \cite{Ba02} reads: 
\begin{theorem}[Badzioch] 
\la{Bad}
The adjunction \eqref{s3e5} is a Quillen equivalence. 
\end{theorem}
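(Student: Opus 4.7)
The plan is to verify the two halves of the Quillen equivalence separately. First, the adjunction must actually descend to $\sGr^h$: since the unlocalized pair $K:\sSet^{\ffgr^{\rm op}} \rightleftarrows \sGr :J$ is manifestly a Quillen adjunction (the projective model structure is objectwise, and $J$ preserves fibrations and weak equivalences because $J\Gamma\langle n\rangle = \Gamma^n$), the universal property of left Bousfield localization reduces the problem to showing that $K$ carries each map in $S$ to a weak equivalence in $\sGr$. But $K\,\Hom_{\ffgr}(\mbox{--},\langle n\rangle)$ is the free simplicial group $F_n$ of rank $n$ (regarded as constant), and $K$ preserves coproducts, so $K(i_n)$ is the canonical identification $F_1 \amalg \cdots \amalg F_1 \to F_n$, which is an isomorphism.

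For the Quillen equivalence itself, it suffices to check that the derived unit and counit are weak equivalences. Since $\sGr = \sSet^{\ffgr^{\rm op}}_{\otimes}$ is a full reflective subcategory of $\sSet^{\ffgr^{\rm op}}$, the right adjoint $J$ is fully faithful, so the counit $\varepsilon_\Gamma:KJ\Gamma \to \Gamma$ is an isomorphism for every $\Gamma \in \sGr$; as every simplicial group is fibrant, the derived counit is automatically a weak equivalence. All the content lies in the unit. Observe that for any $F$, the diagram $JKF$ is strictly product-preserving, hence $S$-local, and therefore fibrant in $\sGr^h$. Using the first step, $K$ takes $S$-local weak equivalences between cofibrant objects to weak equivalences in $\sGr$; replacing a cofibrant $F$ by its $S$-local cofibrant-fibrant replacement $\widehat{F}$, we reduce to proving the following: for any cofibrant $F$ that is homotopy product-preserving, the unit $\eta_F:F \to JKF$ is an objectwise weak equivalence of simplicial sets.

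This last step is the main obstacle, amounting to the assertion that rigidification of homotopy algebras over the algebraic theory of groups is homotopically invisible. I would attack it by writing $K(F)$ explicitly as a coend (or coequalizer) arising from the monad associated to $(K,J)$, using the explicit formula of Lemma~\ref{LKformula}, and then computing the underlying simplicial sets $(JKF)\langle n\rangle = K(F)\langle 1\rangle^n$ via the homotopy colimit calculus developed in Section~\ref{S2}. Applying Thomason's formula \eqref{Thomas} together with the Shapiro Lemma (Proposition~\ref{ShLemma}) to rewrite these coends, and exploiting the homotopy product-preserving property of $F$ at each stage of the resulting simplicial resolution, one should identify
\[
(JKF)\langle n\rangle \;\simeq\; F\langle 1\rangle^n \;\simeq\; F\langle n\rangle,
\]
the second equivalence coming from the $S$-locality of $F$. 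The delicate part is verifying that the composite equivalence agrees, up to homotopy, with $\eta_F$ itself under these identifications; this is where the careful bookkeeping of cofibrancy and $S$-locality throughout the chain of homotopy colimit manipulations becomes essential, and is the step I would expect to absorb most of the technical work.
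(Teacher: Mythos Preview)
The paper does not actually prove this theorem: it is attributed to Badzioch and cited from \cite{Ba02} (see Remark~\ref{Rem2} immediately following the statement, which points to \cite[Theorem~6.4]{Ba02}). So there is no ``paper's own proof'' to compare against; the statement is quoted as background.

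That said, your outline is a plausible reconstruction of how one might attack the result, and the first two steps (localizing the Quillen pair, and handling the counit via full faithfulness of $J$) are correct. The gap is in your proposed treatment of the unit. You want to compute the underlying simplicial set of $K(F)$ using the coend formula \eqref{Kf} together with Thomason's formula and the Shapiro Lemma. But the coend in \eqref{Kf} is taken in $\sGr$, and the forgetful functor $\sGr \to \sSet$ is a \emph{right} adjoint, so it does not commute with colimits (or homotopy colimits). Thus you cannot directly import the $\sSet$-valued homotopy colimit machinery of Section~\ref{S2} to compute $(JKF)\langle 1\rangle$; the passage from the $\sGr$-coend to something expressible via Proposition~\ref{ShLemma} would itself require justification, and this is precisely where the argument stalls. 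Badzioch's original proof proceeds differently, constructing an explicit simplicial resolution of homotopy algebras by free (strict) algebras and controlling the comparison via the small object argument adapted to the localizing set $S$; it does not rely on a coend description of $K$.
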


\begin{remark}
\la{Rem2}
Theorem \ref{Bad} was proved in \cite{Ba02} (see {\it loc. cit.}, Theorem 6.4) for an arbitrary one-sorted algebraic theory (not only the theory of groups). It was extended  to all multi-sorted theories in \cite{Berg06}, and further to limit theories and to diagrams in model categories other than $\sSet$ in \cite{Ros15}. 
\end{remark}

\vspace{1ex}

Next, recall that there is a classical adjunction --- called the {\it Kan loop group construction} \cite{Kan1} --- that relates the model category $\sGr$ of (strict) simplicial groups to that of (reduced) simplicial sets:
\begin{equation} \la{s3e7} 
\lgr\,:\, \sSet_0 \rightleftarrows \sGr\,:\,\bar{W}
\end{equation}

The left adjoint $\lgr$ is called the {\it Kan loop group functor}, and the right adjoint $\bar{W}$ is the {\it classifying complex functor} on simplicial groups. The properties of these functors are well known and discussed in detail, for example, in \cite[Chapter V]{GJ} (see also \cite[Section 2.2]{BRYI}). Here, we mention only two important facts: first, the pair \eqref{s3e7} is a Quillen equivalence, both $\lgr$ and $\bar{W}$ being homotopy invariant functors (see \cite[V.6.4]{GJ}). Second, for any reduced simplicial set $X$, there is a weak homotopy equivalence (see \cite[V.5.11]{GJ})
\begin{equation} \la{s3e8} |\lgr(X)| \simeq \Omega|X| \end{equation}
where $\Omega|X|$ is the (Moore) based loop space of the geometric realization of $X$. The equivalence \eqref{s3e8} clarifies the topological meaning of the Kan loop group functor $\lgr$
(and justifies its name). Combining now Badzioch's Theorem \ref{Bad} with Kan's construction, we get natural equivalences of homotopy categories
\begin{equation} \la{s3e9} 
\Ho(\sGr^h) \,\xrightarrow{\L K} \, \Ho(\sGr) \,\xrightarrow{\bar{W}} \,
\Ho(\sSet_0) \,\xrightarrow{|\,\mbox{--}\,|} \, \Ho({\rm Top}_{0,\ast}) 
\end{equation}
induced by the above indicated functors. This leads us to the following definition.
\begin{definition} \la{CSpaceHSG}
For a homotopy simplicial group $\Gamma \in \sGr^h$, we define its {\it classifying space} $B\Gamma$ by 
\begin{equation} \la{s3e10} B\Gamma\,:=\, |\bar{W} \L K(\Gamma)| \end{equation}
where $\L K:\Ho(\sGr^h) \to \Ho(\sGr)$ is the derived rigidification functor (see \eqref{s3e6}).
\end{definition}
\noindent
Note that if $\Gamma $ is a (strict) simplicial group, i.e. $ \Gamma = J(\Gamma)\,$,
then $B\Gamma \cong |\bar{W}\Gamma|$, since $\L K \circ J  \cong {\rm Id}$. Thus the above definition is a natural extension of Kan's definition of classifying spaces for simplicial groups (which is, in turn, an extension of the classical definition of $B\Gamma$ for ordinary discrete groups). 

We conclude this section by giving a simple formula for the Badzioch rigidification functor that did not seem to appear explicitly in \cite{Ba02}. 
%
\begin{lemma}
\la{LKformula}
The functor $ K: \sGr^h \to \sGr $ in \eqref{s3e5} is given by the coend
\begin{equation} 
\la{Kf}
K(\Gamma) = \int^{\langle n \rangle \in \ffgr} \Gamma\langle n \rangle \otimes \mathbb{F}\langle n \rangle \,,
\end{equation}
where $\mathbb{F}:\ffgr \hookrightarrow \Gr \hookrightarrow \sGr,\,\langle n \rangle \mapsto \mathbb{F}_n$, is the canonical inclusion functor, and $\,\otimes: \sSet \times \sGr \to \sGr\,$ is the standard simplicial tensor action on the category of simplicial groups.
\end{lemma}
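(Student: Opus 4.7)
The plan is to verify that the coend formula in \eqref{Kf} defines a functor that is left adjoint to $J$, and then invoke uniqueness of adjoints. Since the Quillen pair \eqref{s3e5} is obtained by Bousfield localization from the projective Quillen adjunction $K : \sSet^{\ffgr^{\rm op}} \rightleftarrows \sGr : J$ (localization keeps the underlying left adjoint functor intact), it suffices to identify the left adjoint at the level of underlying categories. This is essentially an instance of the co-Yoneda lemma for the theory of simplicial groups, where $\ffgr \hookrightarrow \sGr$ plays the role of the full subcategory of ``free models''.

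First, I would spell out the right adjoint $J$ via the universal property of the free group: for $H \in \sGr$ and $\langle n\rangle \in \ffgr$, we have $J(H)\langle n\rangle = \Hom_{\sGr}(\mathbb{F}\langle n\rangle, H) \cong H^n$, viewed as a simplicial set (with $H^n$ meaning the $n$-fold product of the underlying simplicial set of $H$). This is compatible with the simplicial tensor $\otimes : \sSet \times \sGr \to \sGr$ through the standard hom-tensor adjunction
\[
\Hom_{\sGr}(X \otimes G,\, H) \,\cong\, \Hom_{\sSet}(X,\, \Hom_{\sGr}(G, H))
\]
for $X \in \sSet$ and $G, H \in \sGr$.

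The heart of the argument is then the following chain of natural isomorphisms, obtained for any $\Gamma \in \sSet^{\ffgr^{\rm op}}$ and $H \in \sGr$ by applying, in order, the universal property of the coend, the tensor-hom adjunction recalled above, the identification from the previous paragraph, and the end formula for natural transformations:
\[
\begin{aligned}
\Hom_{\sGr}\!\left(\int^{\langle n\rangle \in \ffgr} \Gamma\langle n \rangle \otimes \mathbb{F}\langle n \rangle,\, H\right)
&\;\cong\; \int_{\langle n\rangle \in \ffgr} \Hom_{\sGr}(\Gamma\langle n\rangle \otimes \mathbb{F}\langle n\rangle,\, H) \\
&\;\cong\; \int_{\langle n\rangle \in \ffgr} \Hom_{\sSet}(\Gamma\langle n\rangle,\, \Hom_{\sGr}(\mathbb{F}\langle n\rangle, H)) \\
&\;\cong\; \int_{\langle n\rangle \in \ffgr} \Hom_{\sSet}(\Gamma\langle n\rangle,\, J(H)\langle n\rangle) \\
&\;=\; \Hom_{\sSet^{\ffgr^{\rm op}}}(\Gamma,\, J(H)).
\end{aligned}
\]
This exhibits the coend functor as a left adjoint to $J$. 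Since left adjoints are unique up to unique natural isomorphism, this functor coincides with $K$, proving the formula.

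The steps are largely formal once one trusts the coend and end calculus in a cocomplete simplicial category; the only real point to verify carefully is the identification $\Hom_{\sGr}(\mathbb{F}\langle n\rangle, H) \cong H^n$ compatibly with the simplicial enrichment, which follows by regarding $\mathbb{F}\langle n\rangle$ as the free simplicial group on the constant simplicial set $\{x_1,\ldots,x_n\}$ and using that the free-forgetful adjunction $\sSet \rightleftarrows \sGr$ is $\sSet$-enriched. The mild subtlety I would expect to be the trickiest bookkeeping is making sure that the coend, which is formed in $\sGr$, is genuinely computed as a colimit of simplicial groups (not merely of simplicial sets), so that the left-hand side of the displayed chain is the correct hom-set in $\sGr$; this is fine because $\sGr$ is cocomplete and the forgetful functor $\sGr \to \sSet$ preserves sifted colimits, but it should be pointed out explicitly.
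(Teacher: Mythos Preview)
Your argument is correct and is precisely the standard density/co-Yoneda argument one would expect: exhibit the coend as left adjoint to $J$ via the tensor--hom adjunction for the simplicial enrichment of $\sGr$, then invoke uniqueness of adjoints. The identification $\underline{\Hom}_{\sGr}(\mathbb{F}\langle n\rangle, H)\cong H^n$ is indeed justified, as you note, by the fact that the free--forgetful adjunction $\sSet \rightleftarrows \sGr$ is $\sSet$-enriched.

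As for comparison with the paper: the paper does not actually prove this lemma in the text. It states the formula and then writes ``For the proof of Lemma~\ref{LKformula} and formula \eqref{s3e6} (in the general setting of \cite{Ba02}) we refer to our forthcoming paper \cite{BR22b}.'' So there is no proof to compare against; your self-contained argument fills that gap. The only remark I would add is that the paper emphasizes the generality (arbitrary algebraic theories in the sense of Badzioch), whereas your write-up is tailored to the theory of groups; but nothing in your chain of isomorphisms uses anything specific to groups beyond cocompleteness and the $\sSet$-enriched free--forgetful adjunction, so the same argument goes through verbatim for any one-sorted algebraic theory.
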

It follows from Lemma~\ref{LKformula} that the derived functor $\L K$ can be written as the homotopy coend
\begin{equation} 
\la{s3e6}
\L K(\Gamma) = \int^{\langle n\rangle \in \ffgr}_{\L} \Gamma\langle n \rangle \otimes \mathbb{F}\langle n \rangle\ . 
\end{equation}
For the proof of Lemma~\ref{LKformula} and formula \eqref{s3e6} (in the general setting of \cite{Ba02}) we refer to our forthcoming paper \cite{BR22b}.

\subsection{Representation homology}
\la{S3.2}
Let $k$ be a commutative ring. 
Recall that, for a  small category $\cC$, we denote by ${\rm Mod}_k(\cC)$ and ${\rm Mod}_k(\cC^{\rm op})$ the categories of all covariant and contravariant functors from $\cC$ to ${\rm Mod}_k$, respectively. It is well known that these are abelian categories with sufficiently many  projective and injective objects. Recall also (see, e.g., \cite[Appendix C.10]{L}) that there is a natural bi-additive functor
\begin{equation*} 
\la{s3e11} \mbox{--} \otimes_{\cC,k} \mbox{--}\,:\, {\rm Mod}_k(\cC^{\rm op}) \times {\rm Mod}_k(\cC) \to {\rm Mod}_k 
\end{equation*}
called the {\it functor tensor product}.
Explicitly, for  $ \M: \cC \to \Mod_k $ and $ \cN:\cC^{\rm op}\to \Mod_k $, it is defined by
\begin{equation} 
\la{s3e11}
\cN \otimes_{\cC,k} \M \, := \, \big[\Moplus_{c \in \cC} \cN(c) \otimes_k \M(c) \big]/R
\end{equation}
where $R$ is the $k$-submodule spanned by elements of the form $ \cN(\varphi)x \otimes y -x \otimes \M(\varphi)y $ for all $x \in \cN(c') $, $y \in \M(c)$ and $\varphi \in {\rm Hom}_{\cC}(c,c')$. The functor \eqref{s3e11} is right exact (with respect to each argument), preserves sums, and is left balanced. Its classical (left) derived functors with respect to each argument are canonically isomorphic and their common value is denoted by ${\rm Tor}^{\cC}_{\ast}(\cN,\M)$. More generally, we can extend the bifunctor \eqref{s3e11} to chain complexes of $\cC$-modules, i.e. the categories ${\rm Ch}({\rm Mod}_k(\cC^{\rm op}))$ and ${\rm Ch}({\rm Mod}_k(\cC))$, and define 
\begin{equation} \la{s3e12} {\rm Tor}^{\cC}_\ast(\cN,\M)\,:=\, {\rm H}_{\ast}(\cN \otimes^{\L}_{\cC,k} \M) \end{equation}
for any $\cN \in {\rm Ch}({\rm Mod}_k(\cC^{\rm op}))$ and $\M \in {\rm Ch}({\rm Mod}_k(\cC))$. Note that $\cN \otimes^{\L}_{\cC,k} \M$ is an object in the (unbounded) derived category $\mathcal{D}(k)= \mathcal{D}({\rm Mod}_k)$ of $k$-modules, and \eqref{s3e12} is just the usual hyper-Tor functor on chain complexes. Next, observe that there is a natural functor 
\begin{equation} \la{s3e13} \sSet^{\cC^{\rm op}} \,\xrightarrow{\,k[\,\mbox{--}\,]\,} \, 
{\rm sMod}_k({\cC}^{\rm op}) \stackrel{N}{\into} \, {\rm Ch}({\rm Mod}_k(\cC^{\rm op}))
\end{equation}
transforming the $\cC^{\rm op}$-diagrams in $ \sSet $ (simplicial presheaves on $\cC$) to 
chain complexes over ${\rm Mod}_k(\cC^{\rm op})$.  Here $N$ stands for 
the classical Dold-Kan normalization functor that identifies simplicial objects in ${\rm Mod}_k(\cC^{\rm op})$ with non-negatively graded chain complexes in ${\rm Ch}({\rm Mod}_k(\cC^{\rm op}))$. Abusing notation, we will write the functor \eqref{s3e13} simply as $\,k[\,\mbox{--}\,]\,$. 

We are now in a position to define representation homology of homotopy simplicial groups with coefficients in commutative Hopf algebras. We recall the well-known fact (see, e.g., \cite[Proposition~14.1.6]{R20}) that every such algebra $ \cH $ defines a covariant functor (a left $\ffgr$-module) by the rule
\begin{equation} 
\la{halg}
\underline{\cH}:\ffgr \to {\rm Mod}_k\ ,\quad  \langle n \rangle \mapsto \cH^{\otimes n}\ .
\end{equation}
In particular if $G$ is an affine algebraic group (e.g., $G={\rm GL}_n(k)$) with coordinate ring $\cH = \cO(G)$, then \eqref{halg} can be written in the form $\langle n \rangle \mapsto \cO[{\rm Rep}_G(\langle n \rangle)]$ which makes the functoriality clear. 

\begin{definition} \la{RHDef}
The {\it representation homology} of a homotopy simplicial group $\Gamma \in \sGr^h$ with coefficients in $\cH$ is defined by 
$$
{\rm HR}_{\ast}(\Gamma, \cH)\,:=\,{\rm Tor}^{\ffgr}_{\ast}(k[\Gamma],\underline{\cH}) \ .
$$
In the special case when $G$ is an affine algebraic group over $k$ and $ \cH = \cO(G) $, we simply write
${\rm HR}_{\ast}(\Gamma, G) $ instead of ${\rm HR}_{\ast}(\Gamma, \cO(G))\,$.
\end{definition}

The next lemma shows that the above definition agrees with the Badzioch model structure on $\sGr^h$.
\begin{lemma} \la{RHBadM}
If two homotopy simplicial groups $\Gamma$ and $\Gamma'$ are weakly equivalent in $\sGr^h$, then 
\begin{equation}
\la{equihr}
\HR_*(\Gamma, \cH)\,\cong\, \HR_*(\Gamma', \cH)
\end{equation} 
for any commutative Hopf algebra $\cH$. 
\end{lemma}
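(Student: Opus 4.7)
The plan is to reduce the statement to invariance of $\HR_\ast$ under objectwise weak equivalences of $\ffgr^{\rm op}$-diagrams in $\sSet$, for which the conclusion is essentially automatic. The key input is the standard recognition principle for Bousfield localizations (see, e.g., \cite[Theorem~3.2.13]{Hir}): a morphism between fibrant objects of the localized model category $\sGr^h = \cL_S(\sSet^{\ffgr^{\rm op}})$ is an $S$-local weak equivalence if and only if it is already a weak equivalence in the underlying projective model structure on $\sSet^{\ffgr^{\rm op}}$. By Proposition~5.5 of \cite{Ba02}, the (projectively fibrant) homotopy simplicial groups are precisely the fibrant objects of $\sGr^h$.

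First, I would functorially replace $\Gamma$ and $\Gamma'$ by fibrant homotopy simplicial groups $R\Gamma$ and $R\Gamma'$ in $\sGr^h$. The structure maps $\Gamma \to R\Gamma$ and $\Gamma' \to R\Gamma'$, being $S$-local weak equivalences between $S$-local objects, are already objectwise weak equivalences by the recognition principle. Similarly, any $S$-local weak equivalence $\Gamma \to \Gamma'$ lifts via the zig-zag $R\Gamma \leftarrow \Gamma \to \Gamma' \to R\Gamma'$ to a weak equivalence $R\Gamma \to R\Gamma'$ between fibrant objects, which is thus an objectwise weak equivalence. Composing these, I obtain a zig-zag of objectwise weak equivalences connecting $\Gamma$ and $\Gamma'$.

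Next, I would verify that $\HR_{\ast}(-,\cH)$ is invariant under objectwise weak equivalences. Given such a map $f:\Gamma \to \Gamma'$, applying levelwise $k$-linearization $k[-]:\sSet \to \sMod_k$ followed by the Dold-Kan normalization yields a morphism $k[f]:k[\Gamma] \to k[\Gamma']$ of chain complexes in $\Mod_k(\ffgr^{\rm op})$. Since both $k[-]$ and $N$ preserve weak equivalences, $k[f]$ is an objectwise quasi-isomorphism. The hyper-Tor functor $\Tor^{\ffgr}_{\ast}(-, \underline{\cH}) = \mathrm{H}_{\ast}(- \otimes^{\boldsymbol{L}}_{\ffgr, k} \underline{\cH})$ is invariant under quasi-isomorphisms in the first argument (as a derived functor computed by cofibrant resolution), whence $\HR_{\ast}(\Gamma, \cH) \cong \HR_{\ast}(\Gamma', \cH)$.

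The only delicate point in the argument is the restriction to the $S$-local objects. The functor $\Tor^{\ffgr}_{\ast}(k[-], \underline{\cH})$ does \emph{not} invert the structure maps $i_n \in S$ themselves: on degree $0$ the induced map is $\bigoplus_{k=1}^n \cH \to \cH^{\otimes n}$, $h \mapsto 1 \otimes \cdots \otimes h \otimes \cdots \otimes 1$, which is typically not an isomorphism of $k$-modules. Consequently, the Bousfield fibrant-recognition step --- which confines the comparison to homotopy simplicial groups --- is essential; naively extending $\HR_\ast$ to all of $\sSet^{\ffgr^{\rm op}}$ would break homotopy invariance.
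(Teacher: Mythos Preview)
Your argument is essentially the paper's: reduce to objectwise weak equivalences in $\sSet^{\ffgr^{\rm op}}$, then use that $k[-]$ followed by normalization preserves such equivalences, so the hyper-Tor agrees. The paper short-circuits your first step by citing \cite[Proposition~5.6]{Ba02} directly (an $S$-local weak equivalence between homotopy simplicial groups is already an objectwise weak equivalence), whereas you reprove this via Hirschhorn's recognition principle plus \cite[Proposition~5.5]{Ba02}.

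One small gap: in Hirschhorn's convention an $S$-local object must be \emph{fibrant} in the underlying projective structure, so your claim that $\Gamma \to R\Gamma$ is a map ``between $S$-local objects'' is not justified when $\Gamma$ is a homotopy simplicial group that is not projectively (i.e.\ objectwise Kan) fibrant. The fix is immediate --- first apply objectwise Kan fibrant replacement, which is an objectwise weak equivalence and preserves the weakly-product-preserving condition --- or simply cite Badzioch's Proposition~5.6 as the paper does. Your closing remark on why the restriction to $S$-local objects is essential coincides with the paper's Remark following the lemma.
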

\begin{proof}
By \cite[Proposition 5.6]{Ba02}, if two homotopy simplicial groups $\Gamma$ and $\,\Gamma'$ are $S$-locally weakly equivalent, then their underlying diagrams are, in fact, weakly equivalent in $\sSet^{\ffgr^{\rm op}}$.  It therefore suffices to show that \eqref{equihr} holds for any objectwise weak equivalent diagrams $\Gamma, \Gamma':\, \ffgr^{\rm op} \to \sSet$. To this end, observe that the linearization functor 
\begin{equation} \la{s3e14} 
k[\,\mbox{--}\,] \,:\, \sSet^{\ffgr^{\rm op}} \to {\rm sMod}_k(\ffgr^{\rm op})
\end{equation}
is left Quillen with respect to the projective model structures (its right adjoint is the forgetful functor). Since the weak equivalences in $\sSet^{\ffgr^{\rm op}}$ are defined objectwise and the model structure on $\sSet$ is cofibrant, being left Quillen, the functor \eqref{s3e14} is actually homotopy invariant: i.e., it maps weakly equivalent objects in $\sSet^{\ffgr^{\rm op}}$ to weakly equivalent objects in ${\rm sMod}_k(\ffgr^{\rm op})$, which, in turn, are transformed by the normalization functor $N$ to quasi-isomorphic complexes in ${\rm Ch}({\rm Mod}_k(\ffgr^{\rm op}))$. Thus if $\Gamma \simeq \Gamma'$ in $ \sSet^{\ffgr^{\rm op}}$, then $\,k[\Gamma] \otimes^{\L}_{\ffgr,k} \underline{\cH} \,\simeq \, k[\Gamma'] \otimes^{\L}_{\ffgr,k} \underline{\cH}\,$ in $ \D(k)$, which implies \eqref{equihr}. 
\end{proof}
\begin{remark}
\la{Rem3}
The result of Lemma \ref{RHBadM} does {\it not} extend to arbitrary objects in $\sGr^h$, since the functor
\eqref{s3e14} does not preserve $S$-local weak equivalences in general. The latter can be seen easily by
looking at \eqref{s3e14} evaluated at representable simplicial presheaves in $\sSet^{\ffgr^{\rm op}}$.
\end{remark}

\vspace{1ex}

An important consequence of Lemma \ref{RHBadM} is that the representation homology of a homotopy simplicial group $ \Gamma $  depends only on the homotopy type of its classifying space $B\Gamma$ (Definition \ref{CSpaceHSG}). In fact, we have
\begin{equation} \la{s3e15} {\rm HR}_{\ast}(\Gamma, \cH) \,\cong\, {\rm HR}_\ast(B\Gamma, \cH)\end{equation}
where the `${\rm HR}$' in the right-hand side stands for representation homology of topological spaces
as defined in \cite{BRYI}, using a (non-abelian) derived representation functor (see {\it loc. cit.},
Definition~3.1). Indeed, by Badzioch's results ({\it cf.} \cite[Theorem 3.1]{Ba02}), every homotopy simplicial group $\Gamma$ is weakly equivalent to a strict one, say $\Gamma'$; hence 
\begin{equation} \la{s3e16} 
B\Gamma \,\simeq\, B\Gamma' \,\simeq\, \bar{W}\Gamma'\ .\end{equation}
On the other hand, by \cite[Theorem 4.2]{BRYI}, $\,{\rm HR}_{\ast}(\Gamma',\cH) \cong {\rm HR}_{\ast}(\bar{W}\Gamma',\cH)\,$, which together with \eqref{s3e16} and the isomorphism \eqref{equihr} of Lemma~\ref{RHBadM} implies \eqref{s3e15}.

We conclude this section by briefly explaining how our approach (Definition~\ref{RHDef}) relates to derived algebraic geometry (DAG). For a model of DAG, we will take the simplicial presheaf model developed in \cite{TV08}. Given a homotopy simplicial group $ \Gamma \in \sGr^h $ and an affine algebraic group (scheme) $G$ over $k$ with coordinate algebra $\cH = \cO(G)$, we introduce the 
{\it derived representation scheme of $\Gamma $ in $G$}: 
\begin{equation}\la{DRep}
\DRep_{G}(\Gamma) := \boldsymbol{\mathrm{R}}\Spec\,(k[\Gamma] \otimes^{\L}_{\ffgr} \cO(G))\ .
\end{equation}
Here `$\, \boldsymbol{\mathrm{R}}\Spec \,$'  denotes the To\"en-Vezzosi derived Yoneda functor  that assigns to a (homotopy) simplicial commutative algebra $A$ --- a derived ring in terminology of \cite{TV08} --- the simplicial presheaf (prestack)  
$$
\boldsymbol{\mathrm{R}}\Spec(A):\, \mathrm{dAff}_k^{\rm op} := \scAlg_k \ \to \ \sSet\ ,\quad
B \,\mapsto\, {\mathrm{Map}}(QA,\,B)\ ,
$$
where $QA$ is a cofibrant replacement of $A$ and `$ \mathrm{Map} $' is the simplicial mapping space (function complex) in $ \scAlg_k $. The prestack  $ \boldsymbol{\mathrm{R}}\Spec(A) $ satisfies the descent condition for \'etale hypercoverings and hence defines a derived stack (which is a derived affine scheme in the sense of \cite{TV08}). On the other hand, for any pointed space (simplicial set) $ X $, we can define the {\it pointed} mapping stack $ \boldsymbol{\mathrm{Map}}_*(X, BG)$ to be the homotopy fibre of the canonical map in the (homotopy) category of derived stacks:
\begin{equation}\la{map*}
\boldsymbol{\mathrm{Map}}_*(X, BG) := {\rm hofib}\,[\boldsymbol{\mathrm{Map}}(X, BG) \,\to\, BG]\ ,
\end{equation}
where $ \boldsymbol{\mathrm{Map}}(X, BG)$ stands for the (unpointed) derived mapping stack  defined in \cite[2.2.6.2]{TV08}. This last mapping stack is a basic object of derived algebraic geometry that plays an important role in applications (see, e.g., \cite{PTVV}). Now, its relation to representation homology is clarified by the following
\begin{proposition}[see \cite{BRYIII}]
\la{PDAG}
There is a $($weak$)$ equivalence of derived stacks
$$
\DRep_{G}(\Gamma)\,\simeq\, \boldsymbol{\mathrm{Map}}_*(B \Gamma, BG) 
$$
\end{proposition}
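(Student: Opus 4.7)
The plan is to evaluate both sides of the claimed equivalence on an arbitrary derived ring $B \in \scAlg_k$ and to show that the resulting simplicial sets agree naturally in $B$. By the Yoneda lemma for derived stacks, this identification will then promote to a weak equivalence of derived stacks.

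For the left-hand side, I start with $\DRep_G(\Gamma)(B) = \Map_{\scAlg_k}(Q(k[\Gamma] \otimes^{\L}_{\ffgr} \underline{\cO(G)}),\,B)$. The covariant $\ffgr$-diagram $\underline{\cO(G)}$ sends $\langle n\rangle$ to $\cO(G)^{\otimes n}$, and mapping this algebra into $B$ yields $G(B)^n$, where $G(B) := \Map_{\cAlg_k}(\cO(G),\,B)$ is the simplicial group of $B$-points of $G$ (the group structure coming from the Hopf algebra structure on $\cO(G)$). Hence $\Map_{\scAlg_k}(\underline{\cO(G)},\,B)$ is precisely the product-preserving $\ffgr^{\rm op}$-diagram $\underline{G(B)}$ associated to the strict simplicial group $G(B)$. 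Being product-preserving, $\underline{G(B)}$ is fibrant in $\sGr^h$ by \cite[Prop.~5.5]{Ba02}, so the derived tensor--hom adjunction gives
$$
\DRep_G(\Gamma)(B) \,\simeq\, \R\Map_{\sSet^{\ffgr^{\rm op}}}(\Gamma,\,\underline{G(B)}) \,\simeq\, \Map_{\sGr^h}(\Gamma,\,\underline{G(B)})\,.
$$
Applying Badzioch's Quillen equivalence \eqref{s3e5}, this transports to $\Map_{\sGr}(\L K(\Gamma),\,G(B))$.

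For the right-hand side, one uses that the classifying stack $BG$ of the affine group $G$ sends $B\in \scAlg_k$ to the classifying complex $\bar W G(B)$ of its simplicial group of $B$-points. Together with the defining homotopy fibre sequence \eqref{map*}, this gives
$$\boldsymbol{\mathrm{Map}}_*(B\Gamma,\,BG)(B) \,\simeq\, \Map_{\sSet_{0,*}}(B\Gamma,\,\bar W G(B))\,.$$
The Kan loop group adjunction \eqref{s3e7} transforms this into $\Map_{\sGr}(\lgr B\Gamma,\,G(B))$. Finally, Definition~\ref{CSpaceHSG} identifies $B\Gamma$ with $|\bar W \L K(\Gamma)|$, and since $\lgr \dashv \bar W$ is a Quillen equivalence one has $\lgr B\Gamma \simeq \L K(\Gamma)$ in $\Ho(\sGr)$. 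Thus the right-hand side evaluates to the same space $\Map_{\sGr}(\L K(\Gamma),\,G(B))$, concluding the comparison.

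The two computations agree, and the chain of identifications is manifestly natural in $B$, so they assemble into an equivalence of derived stacks. The main obstacle --- and the only step that is not a direct application of an adjunction we already have in hand --- is the identification $BG(B) \simeq \bar W G(B)$ for all $B\in \scAlg_k$, compatibly with the \'etale descent used to form $\boldsymbol{\mathrm{Map}}_*$ in the simplicial prestack model of \cite{TV08}. This is the standard description of the affine classifying stack as the simplicial quotient $[\mathrm{pt}/G]$, but it must be set up carefully in the derived model and checked to interact correctly with the pointing. Granted this input, the remainder of the proof is a clean composition of adjunctions weaving Badzioch's equivalence for homotopy simplicial groups together with the classical Kan loop group--classifying complex equivalence for strict simplicial groups.
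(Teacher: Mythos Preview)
The paper does not give its own proof here; it refers the reader to \cite[Appendix~A.1]{BRYIII} for details. Your outline---evaluate both sides pointwise on $B\in\scAlg_k$ and then chain the coend/hom adjunction, Badzioch's Quillen equivalence \eqref{s3e5}, and the Kan loop group adjunction \eqref{s3e7}---is the natural argument and is essentially what that reference does.

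One step you pass over deserves a comment. The ``derived tensor--hom adjunction'' you invoke requires that $k[\Gamma]\otimes^{\L}_{\ffgr}\cO(G)$, which the paper introduces as a derived functor tensor product in $\Mod_k$, actually models the homotopy coend $\int^{\langle n\rangle}_{\L}\Gamma\langle n\rangle\otimes\cO(G)^{\otimes n}$ taken in $\scAlg_k$ (with $\otimes$ the simplicial-set tensoring on commutative algebras). That these two objects agree is precisely the content of \cite[Theorem~4.2]{BRYI}, invoked in the paper just above \eqref{DRep}; without it your first adjunction step does not parse, since the $\Mod_k$-level tensor product has no a priori commutative algebra structure to map out of. The other delicate input---identifying $\boldsymbol{\mathrm{Map}}_*(B\Gamma,BG)(B)$ with $\Map_{\sSet_{0,*}}(B\Gamma,\bar W G(B))$ in the To\"en--Vezzosi model, compatibly with \'etale descent---you have correctly flagged as the main external ingredient.
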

For a detailed proof of Proposition~\ref{PDAG} and more explanations we refer to \cite[Appendix~A.1]{BRYIII}.

\subsection{Cyclic homology}
\la{S3.3}
We now define cyclic homology for homotopy simplicial groups. To this end, we will associate to each $\Gamma \in \sGr^h$ a cyclic module $k[B^{\rm cyc}\Gamma]$ that generalizes the classical cyclic bar construction $ C_{\ast}(k[\Gamma])$  when $\Gamma$ is an ordinary discrete group. 

Let $\Delta$ denote the standard (co)simplicial category whose objects are finite ordered sets $[n]=\{0<1<\cdots<n\}$ and morphisms are (nonstrictly) order preserving maps. The category $\Delta$ is generated by two families of maps $d^i_n:[n-1] \to [n]$ ($0 \leqslant i \leqslant n$, $n \geqslant 1$) and $s^j_n:[n+1] \to [n]$ ($0 \leqslant j \leqslant n$, $n \geqslant 0$), called the (co)face and (co)degeneracy maps respectively. These maps satisfy the standard (co)simplicial relations listed, for example, in \cite[Appendix B.3]{L}. Connes' {\it cyclic category} $\Delta C $ is a natural extension of $\Delta$ that has the same objects and is generated by the morphisms of $ \Delta $ and the cyclic maps $ \tau_n:[n] \to [n]$, $n \geqslant 0$, satisfying $ \tau_n^{n+1}={\rm Id}$ (see \cite[6.11]{L}). In fact, $\Delta C $ can be characterized by the two properties:

\begin{enumerate}
\item[(Cyc1)] For each $n \geqslant 0$, ${\rm Aut}_{\Delta C}([n]) \cong C_{n+1}\,$, where $ C_{n+1} = \Z/(n+1)\Z \,$,
\item[(Cyc2)] Any morphism $f:[n] \to [m]$ in $\Delta C $ can be factored uniquely as $f=g \circ \varphi$, where $g \in {\rm Hom}_{\Delta}([n],[m])$ and $\varphi \in {\rm Aut}_{\Delta C}([n])\,$,
\end{enumerate}

\noindent
These properties show that $\Delta C $ is a {\it crossed simplicial category}
associated to the family of cyclic groups $\{C_{n+1}\}_{n \geqslant 0}$ (see \cite[6.3.0]{L}). 

Now, if $\Gamma$ is an ordinary discrete group, there is a natural functor 
  \begin{equation} \la{s3e18} B^{\rm cyc}_\ast\Gamma\,:\, \Delta C^{\rm op} \to \Set\end{equation}
  called the cyclic bar construction of $\Gamma$ that has the property that $k[B^{\rm cyc}_\ast\Gamma] \cong C_{\ast}(k[\Gamma])$, where $C_\ast(k[\Gamma])$ is the standard cyclic module associated to $k[\Gamma]$ as an associative $k$-algebra. Explicitly, the functor \eqref{s3e18} is defined by (see \cite[7.3.10]{L})
  \begin{eqnarray*}
  d_i(g_0,\ldots,g_n) & = & \begin{cases}
                             (g_0, \ldots, g_{i-1},g_ig_{i+1}, \ldots, g_n) & 0 \leqslant i < n\\
                              (g_ng_0,g_1,\ldots, g_{n-1}) & i=n
                             \end{cases}\\
  s_j(g_0,\ldots,g_n) &= &  (g_0,\ldots,g_j,1,g_{j+1},\ldots, g_n)\\
  t_n(g_0,\ldots, g_n) & = & (g_n, g_0, g_1,\ldots, g_{n-1}) 
  \end{eqnarray*}
  where $(g_0,\ldots,g_n) \in \Gamma^{n+1}$. Clearly, $\Gamma \mapsto B^{\rm cyc}_\ast\Gamma$ gives a functor $B^{\rm cyc}_{\ast}:\Gr \to \Set^{\Delta C^{\rm op}}$. If we identify $\Gr=\Set^{\ffgr^{\rm op}}_{\otimes}$ as in \eqref{s3e1}, then it turns out that $ B^{\rm cyc}_{\ast} $ coincides with the pull-back functor for a certain natural map $\Psi_{\rm cyc}: \Delta C \to \ffgr$ in ${\rm Cat}$. Specifically,  
  \begin{equation} \la{s3e19} 
  \Psi_{\rm cyc}: \Delta C \to \ffgr \end{equation}
  is defined on objects by 
  $$\Psi_{\rm cyc}([n]) \,:= \,\langle n+1 \rangle \,=\, \mathbb{F}\langle x_0,\ldots,x_n\rangle $$
  and on morphisms by the following formulas 
  \begin{equation} \la{s3e20}
\left. \begin{aligned}
\Psi_{\rm cyc}(d^i_n) &: \langle n \rangle \rar \langle n+1 \rangle\,, \qquad (x_0, x_1, \ldots, x_{n-1}) \mapsto \begin{cases} (x_0,\ldots, x_{i-1}, x_i x_{i+1}, \ldots, x_n)\ , &  0 \leq i < n\\
(x_nx_0, x_1,\ldots, x_{n-1})\ , & i=n \end{cases} \\
\Psi_{\rm cyc}(s^j_n) &: \langle n+2\rangle \rar \langle n+1 \rangle \,,  \qquad (x_0, \ldots, x_{n+1}) \mapsto (x_0, \ldots, x_j,1,x_{j+1},\ldots, x_n)\ ,\\
\Psi_{\rm cyc}(\tau_n)&: \langle n+1 \rangle \rar \langle n+1 \rangle\,,  \qquad (x_0,x_1, \ldots,x_n) \mapsto (x_n,x_0, x_1, \ldots, x_{n-1})\ .\\
\end{aligned} \right.
\end{equation}
where $ (x_0, x_1, \ldots, x_n) $ is an ordered sequence of generators of the free group $ \mathbb{F}\langle x_0,\ldots,x_n\rangle $.
\begin{lemma} \la{Bcycpb}
 For any discrete group  $\Gamma$ there is a natural isomorphism of cyclic sets 
 $$B^{\rm cyc}\Gamma \,\cong\, \Psi^{\ast}_{\rm cyc}(\underline{\Gamma}) $$
 where $\underline{\Gamma}:\ffgr^{\rm op} \to \Set$ is the functor corresponding to $\Gamma$ under the identification \eqref{s3e1}. 
 \end{lemma}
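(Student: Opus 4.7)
The proof is essentially a direct verification, so the plan is to simply unpack both sides of the claimed isomorphism and match the structure maps generator by generator. The key observation is that, under the identification $\Gr = \Set^{\ffgr^{\rm op}}_\otimes$ of \eqref{s3e4}, the set $\underline{\Gamma}\langle n+1\rangle = \Gamma^{n+1}$ is canonically identified with the Hom-set $\Hom_{\Gr}(\mathbb{F}_{n+1}, \Gamma)$ via $(g_0,\ldots,g_n) \leftrightarrow \rho$ with $\rho(x_j) = g_j$. Under this identification, $\Psi^*_{\rm cyc}(\underline{\Gamma})([n]) = \underline{\Gamma}\langle n+1\rangle = \Gamma^{n+1} = B^{\rm cyc}_n\Gamma$, so the two functors agree on objects.

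To check agreement on morphisms, it suffices (by property (Cyc2) and the fact that $\Delta$ is generated by cofaces and codegeneracies) to verify the claim on each of the three families of generators of $\Delta C$: the cofaces $d^i_n$, the codegeneracies $s^j_n$, and the cyclic generators $\tau_n$. For each generator $\varphi$ of $\Delta C$, I would apply $\underline{\Gamma}$ to the homomorphism $\Psi_{\rm cyc}(\varphi)$ specified in \eqref{s3e20} and translate the pullback $\rho \mapsto \rho \circ \Psi_{\rm cyc}(\varphi)$ into a map of tuples in $\Gamma^{\bullet+1}$, then compare with the explicit formulas defining $B^{\rm cyc}_\ast\Gamma$.

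For instance, for $0 \leq i < n$ the map $\Psi_{\rm cyc}(d^i_n):\langle n\rangle \to \langle n+1\rangle$ sends the generator $x_i$ to $x_ix_{i+1}$ and the other $x_j$ to the corresponding $x_j$ (or shifted accordingly), so pulling back a homomorphism $\rho:\mathbb{F}_{n+1}\to\Gamma$ yields the tuple $(g_0,\ldots,g_{i-1},g_ig_{i+1},g_{i+2},\ldots,g_n)$, which is exactly $d_i(g_0,\ldots,g_n)$. The case $i = n$ gives $(g_ng_0, g_1,\ldots,g_{n-1})$, matching $d_n$; the codegeneracies insert a $1$ in the $j$-th slot, matching $s_j$; and the cyclic generator cyclically rotates the tuple, matching $t_n$. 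Naturality in $\Gamma$ is automatic since all constructions are functorial in $\Gamma \in \Gr$ via the Yoneda-type assignment $\Gamma \mapsto \underline{\Gamma}$.

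There is no genuine obstacle here: the only subtlety worth flagging is bookkeeping of the variance (the contravariant pullback $\Psi^*_{\rm cyc}$ applied to $\underline{\Gamma} \in \Set^{\ffgr^{\rm op}}$ produces a functor out of $\Delta C^{\rm op}$, which is what a cyclic set is), and the fact that the formulas \eqref{s3e20} have been arranged precisely so that, after identifying $n$-tuples with homomorphisms from $\mathbb{F}_{n+1}$, the pullback reproduces the cyclic bar operators. In this sense the lemma is essentially the definition of $\Psi_{\rm cyc}$ read backwards, and the content of the proof lies entirely in making this identification explicit.
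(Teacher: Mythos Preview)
Your proposal is correct and is exactly the kind of direct verification the paper has in mind; in fact the paper's own proof consists of the single word ``Straightforward.'' Your write-up simply makes explicit the generator-by-generator check that the formulas \eqref{s3e20} defining $\Psi_{\rm cyc}$ are rigged precisely so that pulling back $\underline{\Gamma}$ reproduces the cyclic bar operators on $\Gamma^{n+1}$.
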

 \begin{proof}
 Straightforward.
 \end{proof}
 \begin{remark}
 The functor \eqref{s3e19} was defined in \cite{BRZ21} on a slightly larger -- the so-called epicyclic -- category $\Delta \Psi$, which is an extension of  $ \Delta C $  describing the Adams operations on cyclic modules. 
 \end{remark}

 \vspace{1ex}
 
Lemma \ref{Bcycpb} motivates the following definition.
 \begin{definition} \la{CBHSG}
 For a homotopy simplicial group $\Gamma \in \sGr^h$, we define its {\it cyclic bar construction} by
 \begin{equation} \la{s3e21} B^{\rm cyc}\Gamma\,:=\, \Psi^{\ast}_{\rm cyc}(\Gamma):\ \Delta C^{\rm op} \to \sSet\end{equation}
 and its {\it cyclic homology} by
 \begin{equation} \la{s3e22} {\rm HC}_\ast(k[\Gamma]) 
 \,:=\, \Tor^{\Delta C^{\rm op}}_{\ast}(k,\,k[B^{\rm cyc}\,\Gamma])
 \,\cong\, \Tor^{\Delta C}_{\ast}(k[B^{\rm cyc}\,\Gamma],\,k) 
\end{equation} 
 \end{definition}
 The same argument as in (the proof of) Lemma \ref{RHBadM} shows that ${\rm HC}_\ast(k[\Gamma])$ depends only on the homotopy type of $\Gamma$ in the Badzioch model category $\sGr^h$, and hence, on the homotopy type of its classifying space $B\Gamma$. In view of Lemma \ref{Bcycpb}, the above definition of ${\rm HC}_{\ast}(k[\Gamma])$ for
 $ \Gamma $ an ordinary discrete group
 coincides with the classical (Connes') definition of cyclic homology of group algebras (see \cite[6.2.8]{L}).
 
 \subsection{Derived character maps}
 \la{S3.4}
 Next, we will construct a family of natural transformations relating the cyclic homology to representation homology of a homotopy simplicial group $\Gamma$. In the special case when $ \cH = \cO(\GL_n) $, this family contains a distinguished element determined by the usual trace $\Tr_n$ that gives the derived character map \eqref{Tr*} announced in the Introduction. With our current definitions of representation and cyclic homology the construction is actually very simple.
 It is based on two lemmas. The first one is a standard result of homological algebra that simply exhibits the naturality of derived tensor products \eqref{s3e12}.
 \begin{lemma}
 \la{LTor}
 Let $f: \cA \to \cB $ be a functor between small categories. For any complexes $ \cN \in \Ch(\Mod_k\, \cB^{\rm op}) $ and $ \M \in \Ch(\Mod_k\, \cB) $, there is a natural map $\,f^*\cN \otimes^{\L}_{\cA, k} f^*\M \to \cN \otimes^{\L}_{\cB, k} \M\,$ in the derived category $ \D(k)$ of $k$-modules that induces
 $$
 f^*:\ \Tor^{\cA}_{\ast}(f^*\cN,\,f^*\M)\,\to\,
 \Tor^{\cB}_{\ast}(\cN,\,\M)
 $$
 \end{lemma}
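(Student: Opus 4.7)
The plan is to construct the desired map first at the underived level, then lift it to the derived category by means of the two-sided bar construction, which provides a functorial simplicial resolution of the tensor product over any small category.

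First, at the underived level, I would define a natural map
$$\alpha:\ f^*\cN \otimes_{\cA, k} f^*\M \ \to\ \cN \otimes_{\cB, k} \M$$
directly from the definition \eqref{s3e11}. On the summand indexed by $a \in \cA$, the module $(f^*\cN)(a) \otimes_k (f^*\M)(a) = \cN(f(a)) \otimes_k \M(f(a))$ maps identically to the summand indexed by $f(a) \in \cB$ in $\bigoplus_{b \in \cB} \cN(b) \otimes_k \M(b)$. To see that this descends to the quotient by the relations $R$ on the left-hand side, note that a relation $\cN(\varphi)x \otimes y - x \otimes \M(\varphi)y$ for $\varphi \in \mathrm{Hom}_{\cA}(a,a')$ is sent to $\cN(f\varphi)x \otimes y - x \otimes \M(f\varphi)y$, which vanishes in the quotient defining $\cN \otimes_{\cB, k} \M$ by the relation corresponding to $f(\varphi) \in \mathrm{Hom}_{\cB}(f(a), f(a'))$. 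This $\alpha$ is manifestly natural in $f$, $\cN$ and $\M$.

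Next, to derive this map, I would use the two-sided (normalized) bar complex as a functorial model for the derived tensor product: for any small category $\cC$ and any $\cN \in \mathrm{Ch}(\Mod_k(\cC^{\rm op}))$, $\M \in \mathrm{Ch}(\Mod_k(\cC))$, the total complex $B_\bullet(\cN, \cC, \M)$ with
$$B_n(\cN, \cC, \M) \,=\, \bigoplus_{c_0 \to c_1 \to \cdots \to c_n} \cN(c_n) \otimes_k \M(c_0),$$
summed over composable chains in $\cC$, and standard face/degeneracy operators, computes $\cN \otimes^{\L}_{\cC, k} \M$ in $\D(k)$ (see, e.g., \cite[Appendix C.10]{L}). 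The functor $f: \cA \to \cB$ induces an evident chain map
$$f_\bullet:\ B_\bullet(f^*\cN, \cA, f^*\M) \ \to\ B_\bullet(\cN, \cB, \M)$$
by sending the summand indexed by $(a_0 \to \cdots \to a_n)$ to the summand indexed by $(f(a_0) \to \cdots \to f(a_n))$, using the identity on the coefficient factor $\cN(f(a_n)) \otimes_k \M(f(a_0))$. Compatibility with the face maps reduces to the fact that $f$ respects composition in $\cC$, and compatibility with the degeneracies to the fact that $f$ preserves identity morphisms; in degree zero this chain map reduces to $\alpha$. This produces the required morphism $f^*\cN \otimes^{\L}_{\cA, k} f^*\M \to \cN \otimes^{\L}_{\cB, k} \M$ in $\D(k)$, and taking homology yields the map $f^*$ on $\Tor$.

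The only step that requires a small amount of care is the independence of the construction from the choice of resolution when working with complexes $\cN, \M$ rather than modules, but this is immediate from the model-categorical properties of the bar resolution: $B_\bullet(-,\cC,-)$ is homotopy invariant in both arguments, so quasi-isomorphisms of input complexes produce quasi-isomorphic outputs, and $f_\bullet$ is natural in $\cN$ and $\M$. Hence there are no substantive obstacles; the content of the lemma is essentially the functoriality (in $\cC$) of the bar construction, and the only point to verify carefully is that the map $\alpha$ is well defined modulo the relations $R$, which we have already noted.
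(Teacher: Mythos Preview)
Your proposal is correct. The paper does not actually prove this lemma: it simply introduces it as ``a standard result of homological algebra that simply exhibits the naturality of derived tensor products,'' and moves on without further argument. Your construction via the two-sided bar complex is the standard way to make this naturality explicit, and the verification that $\alpha$ is well-defined and that $f_\bullet$ is a chain map is exactly what is needed. So you have supplied the details the paper omits, by the expected route.
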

To apply this lemma in our situation we recall that
every commutative Hopf $k$-algebra $\cH$ defines the covariant functor $\underline{\cH}:\ffgr \to \Mod_k$ by formula \eqref{halg}. Restricting this functor via
the morphism \eqref{s3e19} gives rise to a cocyclic $k$-module that we denote
$$
B_{\rm cyc} \cH 
:= \Psi_{\rm cyc}^{\ast}(\underline{\cH}):\ \Delta C \to {\rm Mod}_k
$$
On the other hand, by Definition~\ref{CBHSG}, $\,\Psi_{\rm cyc}^*(k[\Gamma]) = k[B^{\rm cyc}(\Gamma)]\,$ for any homotopy simplicial group $ \Gamma $. Thus, by Lemma~\ref{LTor}, the functor $ \Psi_{\rm cyc} $ induces a canonical map
\begin{equation}
\la{Psi*}
\Psi^*_{\rm cyc}:\ \Tor^{\Delta C}_{\ast}(k[B^{\rm cyc}\,\Gamma],\,B_{\rm cyc} \cH)\ \to\ 
\Tor^{\ffgr}_{*}(k[\Gamma],\,\underline{\cH})
\end{equation}
The target of this map is precisely 
 $\HR_*(\Gamma, \cH)$ (see Definition~\ref{RHDef}), while the domain differs
from $ \HC_*(k[\Gamma]) $ in the second argument
of `Tor' ({\it cf.} Definition~\ref{CBHSG}). 
To connect the two Tor's we will use the following lemma which we state in the language of affine algebraic groups.
\begin{lemma}
\la{Lcocyc}
Let $G$ be an affine algebraic group defined over $k$, and let $ \cO(G) $ be its coordinate algebra. There is a natural isomorphism
\begin{equation}
\la{corres}
\Hom_{\Mod_k(\Delta C)}(k,\ B_{\rm cyc}[\cO(G)])\,\cong\, \cO(G)^{G}
\end{equation}
where $ \cO(G)^G $ denotes the invariant subalgebra of $  \cO(G) $
under the adjoint $G$-action.
\end{lemma}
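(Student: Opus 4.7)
\emph{Proof plan.} The plan is to unfold the left-hand side of \eqref{corres} as the limit $\lim_{\Delta C} B_{\rm cyc}[\cO(G)]$ of the cocyclic module, i.e.\ a coherent family $\eta=\{\eta_n\in\cO(G)^{\otimes(n+1)}\}_{n\ge 0}$ satisfying $B_{\rm cyc}[\cO(G)](f)(\eta_n)=\eta_m$ for every morphism $f\colon[n]\to[m]$ in $\Delta C$, and then to build an explicit bijection with $\cO(G)^G$ by means of evaluation at level zero.

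For the forward map $\eta \mapsto \eta_0$, the key step is to show that $\eta_0\in\cO(G)$ is conjugation-invariant. I would read off from \eqref{s3e20} that the two face homomorphisms $\Psi_{\rm cyc}(d^0_1),\Psi_{\rm cyc}(d^1_1)\colon\langle 1\rangle\to\langle 2\rangle$ send $x_0$ to $x_0x_1$ and to $x_1x_0$ respectively. Dualizing via $\underline{\cO(G)}$, these become the pullbacks along the scheme maps $(g_0,g_1)\mapsto g_0g_1$ and $(g_0,g_1)\mapsto g_1g_0$. Naturality of $\eta$ forces both pullbacks of $\eta_0$ to equal $\eta_1$, so $\eta_0(g_0g_1)=\eta_0(g_1g_0)$ for all $g_0,g_1\in G$, which is equivalent to $\eta_0\in\cO(G)^G$.

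For the inverse I would send $\eta_0\in\cO(G)^G$ to the family $\eta_n:=\mu_n^{\ast}(\eta_0)$, where $\mu_n\colon G^{n+1}\to G$ denotes the iterated product $(g_0,\ldots,g_n)\mapsto g_0g_1\cdots g_n$; concretely $\eta_n(g_0,\ldots,g_n)=\eta_0(g_0g_1\cdots g_n)$. Naturality of this system reduces to a direct check on the three families of generators of $\Delta C$ using \eqref{s3e20}: an inner face $d^i_n$ with $0\le i<n$ fuses $g_i$ and $g_{i+1}$ and thus preserves the product $g_0\cdots g_n$; the outer face $d^n_n$ yields the product $g_ng_0g_1\cdots g_{n-1}$, which equals $g_0\cdots g_n$ after applying the class-function identity $\eta_0(xy)=\eta_0(yx)$ with $x=g_n$ and $y=g_0\cdots g_{n-1}$; codegeneracies $s^j_n$ insert the unit of $G$ and are absorbed by multiplication; and the cyclic automorphism $\tau_n$ cyclically permutes the arguments, which is again absorbed by the class-function property.

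That the two assignments are mutually inverse is clear in one direction ($\eta_0\mapsto\{\mu_n^{\ast}(\eta_0)\}$ has $\eta_0$ as its level-$0$ component) and follows in the other from the observation that the iterated coface $\alpha := d^0_n\circ d^0_{n-1}\circ\cdots\circ d^0_1\colon[0]\to[n]$ is sent by $\Psi_{\rm cyc}$ to the homomorphism $\langle 1\rangle\to\langle n+1\rangle$, $x_0\mapsto x_0x_1\cdots x_n$, so that naturality of any $\phi$ along $\alpha$ forces $\eta_n=\mu_n^{\ast}(\eta_0)$. The main obstacle is to confirm that the single level-$1$ cyclic constraint is strong enough to ensure coherence at every higher level; this resolves cleanly because the class-function identity $\eta_0(xy)=\eta_0(yx)$ is precisely what makes $\eta_0(g_0\cdots g_n)$ invariant under every cyclic re-ordering of $g_0,\ldots,g_n$, and this cyclic invariance is exactly the extra symmetry that $\Delta C$ imposes on $B_{\rm cyc}[\cO(G)]$ beyond that coming from $\Delta$.
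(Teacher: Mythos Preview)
Your proposal is correct and follows essentially the same approach as the paper: both identify a natural transformation $k\to B_{\rm cyc}[\cO(G)]$ with its value at $[0]$, use the iterated coface $d^0_n\cdots d^0_1$ (the paper calls it $\pi_m$) to show that all higher components are determined by this value via the iterated coproduct, and verify that the cyclic relations force the level-$0$ element to be $\Ad\,G$-invariant. The only cosmetic difference is that you extract the invariance from the pair of faces $d^0_1,d^1_1$ at level one, whereas the paper deduces it from compatibility with the cyclic operators $\tau_m$; these are equivalent checks.
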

\begin{proof}
For $ m \ge 0 $, denote by $ \pi_m: [0] \to [m]$  the composition of maps $\,d^0_m \, d^0_{m-1} \, \ldots \, d^0_1\,$ in $ \Delta C$. It follows from \eqref{s3e20} that $\Psi_{\rm cyc}(\pi_m): \langle 1 \rangle \to \langle m+1 \rangle$ is the homomorphism of groups taking the generator $x$ of $ \bF\langle x\rangle$  to the product of generators $x_0 x_1 \ldots x_m$ in $\bF\langle x_0, \ldots , x_m\rangle$. The corresponding map $ [B_{\rm cyc} \cO(G)](\pi_m): \cO(G) \to \cO(G)^{\otimes (m+1)} $ can thus be identified with the the $m$-fold coproduct in $ \cO(G)$:
\begin{equation}
\la{Dnf}
\Delta_G^{(m)}:\ 
\cO(G) \to \cO(G^{m+1})\,,\quad P \mapsto \big[(g_0, g_1, \ldots,g_m) \mapsto P(g_0\,g_1 \, \ldots \, g_m) \big] \ .
\end{equation}
Now, it is easy to check that, for a fixed $ P \in \cO(G)^G$, the maps $ \Delta_G^{(m)}(P):\, k \to \cO(G^{m+1})$ taking $1 \in k $ to $ \Delta_G^{(m)}(P) $ assemble to
a morphism of cocylic modules $\,\Delta_G(P):\ k \,\to\, B_{\rm cyc}[\cO(G)]\,$, the commutativity with cyclic operators $ \tau_m $ being a consequence of the $G$-invariance of $P$.
We claim that the assignment $\, P \mapsto \Delta_G(P)\,$ defines a $k$-linear isomorphism
\begin{equation}
\la{Dnf1}
\Delta_G:\ \cO(G)^G \,\xrightarrow{\sim}\, \Hom_{\Mod_k(\Delta C)}(k,\ B_{\rm cyc}[\cO(G)])\ .
\end{equation}

The inverse of \eqref{Dnf1} can be constructed as follows. Let $\varphi \in \Hom_{\Mod_k(\Delta C)}(k,\ B_{\rm cyc}[\cO(G)])$. Note that, for all $[m] \in \Delta C $, its components $\varphi_{[m]}:\,k \to \cO(G)^{\otimes (m+1)} \cong \cO(G^{m+1})$ are $k$-linear maps. Define $ T\varphi := \varphi_{[0]}(1) \in \cO(G)$, where $ 1 \in k$.  Since $\varphi$ is a natural transformation, 
$$\varphi_{[m]}(1) = \{[B_{\rm cyc} \cO(G)](\pi_m)\}(\varphi_{[0]}(1)) = \{[B_{\rm cyc} \cO(G)](\pi_m)\}(T\varphi) = \Delta^m(T \varphi)\ ,
$$
where $ \Delta^m $ is defined in \eqref{Dnf}.
Similarly, applying  $B_{\rm cyc}[\cO(G)]$ to the cyclic operators $ \tau_m $ in $ \Delta C$, we have
$$\varphi_{[m]}(1) = \{[B_{\rm cyc} \cO(G)](\tau_m)\}(\varphi_{[m]}(1))\,, $$
from which it follows that $T\varphi (g_0 \ldots g_m)= T\varphi(g_m g_0\ldots g_{m-1})$ for all $g_0,\ldots,g_m \in G$. This is equivalent to the assertion that $T \varphi \in \cO(G)^G$. 
Thus $T$ defines a $k$-linear map 
$$
\Hom_{\Mod_k(\Delta C)}(k,\ B_{\rm cyc}[\cO(G)]) \to \cO(G)^G\,,\qquad \varphi \mapsto T\varphi\ .
$$
It is clear from its construction that the above map is the inverse of \eqref{Dnf1}.
\end{proof}
We can now make the following definition.
\begin{definition}
\la{Gchar}
Let $ \Gamma \in \sGr^h $ be a homotopy simplicial group. For an affine algebraic group $G$ and an $\Ad\,G$-invariant polynomial $ P \in \cO(G)^G$, we define the {\it derived $G$-character map of $\,\Gamma\,$
associated to $P$} by
\begin{equation}
\la{chiGP}
\chi_{G,P}(\Gamma)_{\ast}:\ \HC_*(k[\Gamma]) \,\xrightarrow{(\Delta_G P)_*}\,
\Tor^{\Delta C}_{\ast}(k[B^{\rm cyc}\,\Gamma],\,B_{\rm cyc}[\cO(G)])\,\xrightarrow{\Psi_{\rm cyc}^*}\,
\HR_\ast(\Gamma, G)\ ,
\end{equation}
where $ (\Delta_G P)_* $ is a linear map induced  by the map of cocyclic modules $ \Delta_G P:\, k \to B_{\rm cyc}[\cO(G)]$ (see \eqref{Dnf} and \eqref{Dnf1}), and $ \Psi_{\rm cyc}^* $ is the map \eqref{Psi*} defined for $ \cH = \cO(G) $. 
\end{definition}
Explicitly, if we choose a projective resolution $\, Q \xrightarrow{\sim} k[\Gamma] \,$ of $k[\Gamma]$ in the (abelian) category $ \Mod_k(\ffgr^{\rm op})$, applying  the functor $ \Psi_{\rm cyc}^* $ gives a projective resolution $\, \Psi_{\rm cyc}^*Q \xrightarrow{\sim} k[B^{\rm cyc}\, \Gamma] \,$ of the cyclic module $ k[B^{\rm cyc}\,\Gamma]$ in $ \Mod_k(\Delta C^{\rm op})$. The map \eqref{chiGP} is then induced by a map of chain complexes
\begin{equation}
\la{chiGP1}
\chi_{G,P}(\Gamma)_{\ast}: \ (\Psi_{\rm cyc}^*Q) \otimes_{\Delta C} k\,\to\,
Q \otimes_{\ffgr} \cO(G)
\end{equation}
which, in turn, is induced by the following map (see \eqref{s3e11})
\begin{equation}
\la{chiGP2}   
\Moplus_{[m] \in \Delta C}\,Q\langle m+1\rangle\ \to\ 
\Moplus_{\langle n \rangle \in \ffgr}\,Q\langle n\rangle \otimes \cO(G)^{\otimes n}\ ,\quad
v_{m+1} \,\mapsto\, v_{m+1} \otimes \Delta_G^{(m)}(P) \ , 
\end{equation}
where $ v_{m+1} \in Q\langle m+1\rangle $ and $\Delta_G^{(m)}(P) \in \cO(G)^{\otimes (m+1)} $ is defined by \eqref{Dnf}.

In the special case when $ G = \GL_n(k) $ and $ P = \Tr_n \in \cO(\GL_n) $ is the usual trace function on $ (n \times n)$-matrices, we denote the map \eqref{chiGP} by 
\begin{equation}
\la{Trrr}
\Tr_n(\Gamma)_\ast:\ \HC_*(k[\Gamma])\,\to\,\HR_\ast(\Gamma, \GL_n(k))\,,
\end{equation}
and call it the {\it derived character map of $n$-dimensional representations of} $\,\Gamma$. 
%
%
%
%
In the rest of the paper, we will study the maps $\Tr_n(\Gamma)_\ast$ in two extreme cases: $\, n = 1 \,$ and $\, n = \infty \,$. In the first case, we will give a topological realization of $ \Tr(\Gamma)_\ast := \Tr_1(\Gamma)_* $ by showing that this map is induced on homology by a natural map of topological spaces; in the second case, we will show that $\Tr_{\infty}(\Gamma)_\ast := \varprojlim\,\Tr_n(\Gamma)_\ast\, $ extends to an isomorphism between the graded symmetric algebra generated by $ \rHC_*(k[\Gamma]) $ and
the $\GL_\infty$-invariant subalgebra of the stable representation homology $ \HR_*(\Gamma, \GL_{\infty}(k))$. We close this section with a general remark linking the above 
construction to earlier work.
\begin{remark}
\la{RemAlg}
If $ \Gamma $ is an ordinary discrete or (strict) simplicial group, then $ k[\Gamma] $
is naturally a simplicial associative $k$-algebra. By (a monoidal version of) the classical 
Dold-Kan correspondence (see \cite{SS03}), we can therefore view $ k[\Gamma] $ as a
differential-graded (DG) associative $k$-algebra. For such algebras (defined over 
a field $k$ of characteristic $0$), the derived character maps of $n$-dimensional 
representations were constructed in \cite{BKR}. One can show that these maps agree 
with \eqref{Trrr} in the case of group algebras, although the comparison is not entirely 
trivial as the methods used in \cite{BKR} and the present paper are quite different. 
We will address this question in our forthcoming paper \cite{BR22b} in a greater generality.
\end{remark}

\section{Topological Realization of Derived Character Maps}
\la{S4}
In this and next sections, we will prove our main results (Theorem~\ref{T1} and Theorem~\ref{CRMap}) stated in the Introduction. Here we will construct
the required spaces and maps simplicially: in terms of homotopy colimits of small diagrams of simplicial sets
and associated natural maps. Then, in the next section, we will reproduce these maps in topological terms, using Goodwillie homotopy calculus and topological operads. The connection between the two approaches seems
instructive and deserves a further investigation.

\subsection{The space $ X_{\Gamma}$}
\la{S4.1}
Recall that $\ffgr$ denotes the skeleton of the category of finitely generated free groups. There is a natural {\it abelianization functor} 
\begin{equation} \la{s4e1} \underline{\Z}\,:\,\ffgr \to \Set\,,\qquad \langle n \rangle \mapsto \Z^n\ ,
\end{equation}
that takes the free group $\langle n \rangle = \bF_n $ to (the underlying set of) its abelianization $\langle n \rangle_{\rm ab} = \Z^n$. As in Section \ref{S2},  we can form the category of elements of \eqref{s4e1}, using the Grothendieck construction:
\begin{equation} \la{s4e2} \ffgr_{\Z}\,:=\, \ffgr\! \smallint \underline{\Z} \end{equation}
The objects of $ \ffgr_{\Z} $ are given explicitly by 
$${\rm Ob}(\ffgr_{\Z}) \,=\,\{ (\langle n \rangle ;k_1,\ldots, k_n)): \langle n \rangle \in \ffgr \,,\, (k_1,\ldots,k_n) \in \Z^n \} $$
and the morphism sets are
$${\rm Hom}_{\ffgr_\Z}((\langle n \rangle; k_1,\ldots,k_n), (\langle m \rangle; l_1,\ldots,l_m))\,=\,\{\varphi \in {\rm Hom}_{\ffgr}(\langle n \rangle, \langle m \rangle): \ \varphi_{\rm ab}(k_1,\ldots,k_n)= (l_1,\ldots,l_m) \} $$
Note that the abelianized map $\,\varphi_{\rm ab}:\Z^n \to \Z^m\,$ above is represented by an integral $(m \times n)$-matrix, $\varphi_{\rm ab} \in {\mathbb M}_{m \times n}(\Z)$, and its action on $n$-tuples of integers is simply given by matrix multiplication. The category \eqref{s4e2} comes together with the canonical (forgetful) functor
\begin{equation} \la{s4e3} p: \ffgr_{\Z} \to \ffgr\,,\qquad  (\langle n \rangle; k_1,\ldots,k_n) \mapsto \langle n \rangle\ .\end{equation}
Given a homotopy simplicial group $\Gamma: \ffgr^{\rm op} \to \sSet$, we now define 
\begin{equation} \la{s4e4} X_{\Gamma}\,:=\, |\,\hocolim_{\ffgr_{\Z}^{\rm op}}(p^{\ast}\Gamma)\,|\,, \end{equation}
where $p^{\ast}$ is the pullback functor $\sSet^{\ffgr^{\rm op}} \to \sSet^{\ffgr_\Z^{\rm op}}$ associated to \eqref{s4e3}. The relation of the space \eqref{s4e4} to representation homology becomes clear from the following observation.
\begin{lemma} \la{HXGa}
For any $\Gamma$ and any commutative ring $k$, there is a natural isomorphism 
\begin{equation} \la{s4e5} 
{\rm H}_\ast(X_{\Gamma}, k) \,\cong\, \HR_\ast(\Gamma, \mathbb{G}_m(k)) 
\end{equation}
\end{lemma}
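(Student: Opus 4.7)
The plan is to identify $H_\ast(X_\Gamma,k)$ with $\Tor^{\ffgr}_\ast(k[\Gamma],\underline{\cO(\mathbb{G}_m)})$ via a direct application of Proposition~\ref{ShLemma} (Shapiro Lemma for model categories), taking $\M = \Ch(\Mod_k)$, $\cC = \ffgr$, and the set-valued functor $F = \underline{\Z}:\ffgr \to \Set$ of \eqref{s4e1}, so that $\cC_F = \ffgr_\Z$ is the category appearing in \eqref{s4e2}.

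First I would pass from the simplicial set to chains: the composite $Nk[\,\mbox{--}\,]:\sSet \to \Ch(\Mod_k)$ of $k$-linearization and Dold-Kan normalization is a left Quillen functor, hence its total left derived functor preserves homotopy colimits. Therefore, using that singular homology of $|Y|$ is computed by $Nk[Y]$ for $Y \in \sSet$,
\begin{equation*}
H_\ast(X_\Gamma,k) \,=\, H_\ast\bigl(Nk[\,\hocolim_{\ffgr_\Z^{\rm op}}(p^\ast \Gamma)\,]\bigr)
\,\simeq\, H_\ast\bigl(\hocolim_{\ffgr_\Z^{\rm op}}(p^\ast Nk[\Gamma])\bigr),
\end{equation*}
where $p^\ast Nk[\Gamma]:\ffgr_\Z^{\rm op}\to\Ch(\Mod_k)$ is the pullback of the contravariant diagram $Nk[\Gamma]:\ffgr^{\rm op}\to\Ch(\Mod_k)$ along \eqref{s4e3}.

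Next I would apply Proposition~\ref{ShLemma} to the diagram $X=Nk[\Gamma]$ and the functor $F=\underline{\Z}$, obtaining a natural weak equivalence
\begin{equation*}
\hocolim_{\ffgr_\Z^{\rm op}}(p^\ast Nk[\Gamma])\,\simeq\, \int_\L^{\langle n\rangle \in \ffgr} Nk[\Gamma\langle n\rangle] \otimes \underline{\Z}\langle n\rangle
\end{equation*}
in $\Ch(\Mod_k)$. The key algebraic observation, which makes the right-hand side match the definition of representation homology, is that for $\M=\Ch(\Mod_k)$ the tensor action of $\Set$ coincides with $k$-linearization: namely, $C \otimes S = \bigoplus_S C \cong C \otimes_k k[S]$ for any complex $C$ and any set $S$. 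Specializing to $S = \Z^n$ and using the canonical isomorphism $k[\Z^n] \cong k[t,t^{-1}]^{\otimes n} = \cO(\mathbb{G}_m)^{\otimes n} = \underline{\cO(\mathbb{G}_m)}\langle n\rangle$ of commutative Hopf algebras (naturally in $\langle n\rangle \in \ffgr$), the right-hand side above becomes the derived functor tensor product
\begin{equation*}
\int_\L^{\langle n\rangle \in \ffgr} Nk[\Gamma\langle n\rangle] \otimes_k \underline{\cO(\mathbb{G}_m)}\langle n\rangle
\,\simeq\, Nk[\Gamma] \otimes^{\L}_{\ffgr,k} \underline{\cO(\mathbb{G}_m)}\,.
\end{equation*}

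Finally, taking homology and using \eqref{s3e12} together with Definition~\ref{RHDef},
\begin{equation*}
H_\ast(X_\Gamma,k) \,\cong\, H_\ast\bigl(Nk[\Gamma] \otimes^{\L}_{\ffgr,k} \underline{\cO(\mathbb{G}_m)}\bigr) \,=\, \Tor^{\ffgr}_\ast\bigl(k[\Gamma],\,\underline{\cO(\mathbb{G}_m)}\bigr) \,=\, \HR_\ast(\Gamma,\mathbb{G}_m(k))\,,
\end{equation*}
and naturality in $\Gamma$ is clear from the construction. The only nontrivial step is the use of Proposition~\ref{ShLemma}, which is already in hand; what remains is the bookkeeping described above (identifying the $\Set$-tensoring with $k$-linearization and recognizing $k[\underline{\Z}]$ as $\underline{\cO(\mathbb{G}_m)}$), together with the fact that left Quillen functors commute with homotopy coends, as recorded in \eqref{lfhcoend}.
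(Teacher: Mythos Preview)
Your proposal is correct and follows essentially the same route as the paper. The only cosmetic difference is packaging: the paper first identifies $H_\ast(X_\Gamma,k)\cong \Tor^{\ffgr_\Z}_\ast(k[p^\ast\Gamma],k)$ via the standard ``homology of a hocolim is Tor'' fact and then invokes the classical Shapiro Lemma (Lemma~\ref{Shap}) for Tor, whereas you apply the model-categorical Proposition~\ref{ShLemma} directly in $\Ch(\Mod_k)$ and then take homology---but since Lemma~\ref{Shap} is precisely Proposition~\ref{ShLemma} specialized to $\Ch(\Mod_k)$, the two arguments are the same.
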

\begin{proof}
We have the sequence of natural isomorphisms
\begin{equation*}
H_\ast(X_{\Gamma}, k) \, \cong \, {\rm Tor}^{\ffgr_\Z^{\rm op}}_\ast(k,\, k[p^{\ast}\Gamma])
                             \,\cong \, {\rm Tor}^{\ffgr_\Z}_\ast(k[p^{\ast}\Gamma],\,k)
                        \,    =  \, {\rm Tor}^{\ffgr_\Z}_\ast(p^{\ast}k[\Gamma],\,k)
                            \, \cong\,  {\rm Tor}^{\ffgr}_\ast(k[\Gamma],\,k[\Z])\ ,
\end{equation*}
where the first two are standard (see, e.g., \cite[Appendix C.10]{L}) and the last one follows from the classical Shapiro Lemma (see Lemma \ref{Shap}). To complete the proof it remains to note that $ k[\Z] $ can be identified with $\cO[{\mathbb G}_m(k)]  $ as a commutative Hopf algebra. 
\end{proof}
As in the Introduction, we shorten notation for one-dimensional representation homology, writing 
\begin{equation}\la{HRnot}
\HR_\ast(k[\Gamma])\,:=\,\HR_\ast(\Gamma, \mathbb{G}_m(k))\ .
\end{equation}
Our next goal is to  identify the homotopy type of the space $X_{\Gamma}$ in terms of the classifying space of $\Gamma$. The following theorem is one of the main results of the present paper.
\begin{theorem} \la{omspinf}
For any homotopy simplicial group $\Gamma$, there is a weak equivalence in ${\rm Top}_\ast \!:$
\begin{equation} 
\la{s4e6} 
X_{\Gamma} \,\simeq\, \Omega\,{\rm SP}^{\infty}(B\Gamma)
\end{equation}
where $B\Gamma$ is the classifying space of $\, \Gamma $ $($see Definition \ref{CSpaceHSG}$)$.
\end{theorem}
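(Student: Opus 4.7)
The plan is to identify both sides as realizations of a common homotopy coend indexed by $\ffgr$: one coming from the Shapiro lemma applied to $X_\Gamma$, the other from derived abelianization of the Kan loop group applied to $\Omega\,\SP^\infty(B\Gamma)$.

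First, I would apply the Shapiro lemma for model categories (Proposition~\ref{ShLemma}) with $\cC = \ffgr$, $F = \underline{\Z}$, and $X = \Gamma$ to the defining homotopy colimit of $X_\Gamma$, yielding a natural weak equivalence
$$
X_\Gamma \,\simeq\, \Bigl|\int^{\langle n \rangle \in \ffgr}_{\L} \Gamma\langle n \rangle \otimes \Z^n\,\Bigr|\,,
$$
where $\Z^n = \underline{\Z}\langle n\rangle$ is viewed as a discrete set. On the other hand, since $\Omega B\Gamma \simeq |\L K(\Gamma)|$ and the abelianization $\mathrm{ab}:\sGr\to s\mathrm{Ab}$ is left Quillen (being left adjoint to the inclusion), applying $\L\mathrm{ab}$ to the rigidification formula~\eqref{s3e6} and using $\mathrm{ab}(\bF\langle n\rangle)=\Z^n$ gives
$$
\L\mathrm{ab}(\L K(\Gamma))\,\simeq\, \int^{\langle n\rangle\in \ffgr}_{\L}\Gamma\langle n\rangle \otimes_{s\mathrm{Ab}}\Z^n\,.
$$

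Next, I would invoke the natural equivalence $|\L\mathrm{ab}(H)|\simeq \Omega\,\SP^\infty|\bar{W}H|$ valid for any simplicial group $H$: via the Dold-Kan correspondence, both sides correspond to the shifted normalized chain complex $C_\ast(BH)[-1]$, since derived abelianization computes $H_{\ast+1}(BH,\Z)$ by the generalized Hopf formula, while $\Omega\,\SP^\infty$ realizes the same shifted complex by the classical Dold-Thom theorem. Applied to $H=\L K(\Gamma)$, this yields $\Omega\,\SP^\infty(B\Gamma)\simeq|\L\mathrm{ab}(\L K(\Gamma))|$. The proof then reduces to showing that the Hurewicz-type comparison
$$
\Bigl|\int^{\langle n\rangle}_{\L}\Gamma\langle n\rangle\otimes\Z^n\Bigr|\,\to\, \Bigl|\int^{\langle n\rangle}_{\L}\Gamma\langle n\rangle\otimes_{s\mathrm{Ab}}\Z^n\Bigr|\,,
$$
induced by the unit of the free/forgetful adjunction $\tilde{\Z}:\sSet_\ast\rightleftarrows s\mathrm{Ab}$, is a weak equivalence. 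By Lemma~\ref{HXGa} the source has $k$-homology $\HR_\ast(\Gamma,\G_m(k))$, and by the Dold-Thom computation so does the target; since the target is a generalized Eilenberg-MacLane space, a homology equivalence that respects the H-space structure forces a weak equivalence.

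The main obstacle is this last step: rigorously establishing an H-space structure on $X_\Gamma$ (coming from the abelian group structure on $\Z^n=\mathrm{ab}(\bF\langle n\rangle)$ and the coproduct in $\ffgr$), verifying H-map compatibility of the Hurewicz comparison, and securing the naturality of the Dold-Thom equivalence $|\L\mathrm{ab}(H)|\simeq\Omega\,\SP^\infty|\bar{W}H|$ rather than merely an abstract isomorphism of homotopy groups. Both issues should yield to a chain-level analysis via the Dold-Kan normalization functor $N:s\mathrm{Ab}\to\mathrm{Ch}_{\ge 0}(\Z)$, under which the two homotopy coends become quasi-isomorphic complexes, both computing $C_\ast(B\Gamma)[-1]$.
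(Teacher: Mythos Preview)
Your overall strategy coincides with the paper's: apply Proposition~\ref{ShLemma} (Shapiro) to rewrite $X_\Gamma$ as a homotopy coend over $\ffgr$, use the rigidification formula~\eqref{s3e6} together with the fact that derived abelianization (being left Quillen) commutes with homotopy coends, and identify $|\L\mathrm{ab}(\L K(\Gamma))|$ with $\Omega\,\SP^\infty(B\Gamma)$ via the Kan model~\eqref{s4e11}. Steps~1--3 of your outline are exactly the paper's argument.

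The divergence is in your step~4. You treat the Shapiro coend $\int^{\langle n\rangle}_{\L}\Gamma\langle n\rangle\otimes\Z^n$ (in $\sSet$) and the abelianized rigidification coend $\int^{\langle n\rangle}_{\L}\Gamma\langle n\rangle\otimes_{s\mathrm{Ab}}\Z^n$ as genuinely different objects connected by a Hurewicz-type comparison map, and then propose to prove this map is a weak equivalence by an $H$-space/homology argument. The paper instead short-circuits this entirely: it observes that the $\sSet$-valued bifunctor $(\langle n\rangle,\langle m\rangle)\mapsto\Gamma\langle n\rangle\times\Z^m$ arising from Shapiro already \emph{factors} as $\mathrm{forget}\circ(\mbox{--})_{\rm ab}\circ(\Gamma\otimes\bF)$, where $\Gamma\otimes\bF$ is the rigidification bifunctor of Lemma~\ref{LKformula}. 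Since $\Gamma\otimes\bF$ is objectwise cofibrant in $\sGr$ (free products of free groups), one has $\L(\Gamma\otimes\bF)_{\rm ab}\simeq(\Gamma\otimes\bF)_{\rm ab}$, and the two homotopy coends are identified directly --- no separate comparison map is needed.

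Your proposed resolution of the ``main obstacle'' is both unnecessary and problematic. The claim that source and target have the same $k$-homology (via Lemma~\ref{HXGa} and Dold--Thom respectively) is essentially circular: knowing $\HR_\ast(\Gamma,\G_m(k))\cong H_\ast(\Omega\,\SP^\infty(B\Gamma),k)$ independently of the space-level equivalence is tantamount to Theorem~\ref{T1}, which is precisely the corollary you are trying to establish. Moreover, endowing $X_\Gamma$ with an $H$-space structure and checking the comparison is an $H$-map adds a layer of work that the paper's direct bifunctor identification avoids entirely. Your closing remark about a chain-level Dold--Kan analysis is closer in spirit to the right idea, but the paper shows you don't need to descend to chain complexes at all: the identification happens at the level of the bifunctors feeding into the homotopy coend.
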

Before proving this theorem, we recall a few basic facts about the Dold-Thom space and related constructions
(see, e.g., \cite[Chap. 4.K]{Hat}). For any pointed connected CW complex $X$, the {\it Dold-Thom space} ${\rm SP}^{\infty}(X)$ is defined as the infinite symmetric product: namely, 
\begin{equation} \la{s4e7} {\rm SP}^{\infty}(X)\,=\,\varinjlim_n\, {\rm SP}^n(X) \end{equation}
where ${\rm SP}^n(X) := X^n\!/S_n$ with $S_n$ acting on $ X^n $ the natural way (by permuting the factors). The maps ${\rm SP}^n(X) \to {\rm SP}^{n+1}(X)$ along which the inductive limit \eqref{s4e7} is taken are induced by the natural inclusion 
$$ 
X^n \hookrightarrow X^{n+1}\,,\qquad (x_1,\ldots,x_n) \mapsto (x_1,\ldots, x_n,\ast) \ , 
$$
where `$\ast$' stands for the basepoint of $X$. The Dold-Thom Theorem asserts that, for all
$ i \ge 1 $, there are isomorphisms of abelian groups 
$$
\pi_i[{\rm SP}^{\infty}(X)] \,\cong\, H_i(X,\Z)\,,
$$
that are natural in pointed connected CW complexes $X$. In fact, this classical theorem provides a topological realization for the Hurewicz homomorphisms in the sense that the natural map of spaces 
\begin{equation} \la{s4e8} X={\rm SP}^1(X) \hookrightarrow {\rm SP}^{\infty}(X)\end{equation}
induces the homomorphisms of groups: $\,\pi_i(X) \to \H_i(X,\Z)\,$ for all $i \geqslant 1$. 

Now, let $\cF X$ denote the homotopy fibre of the inclusion map \eqref{s4e8} so that we have a homotopy fibration sequence
\begin{equation} \la{s4e9} 
\cF X \to X \to {\rm SP}^{\infty}(X) \ .
\end{equation}
There is an alternative way to obtain this fibration sequence, using Kan's simplicial group model $\lgr(X)$ of the space\footnote{Abusing notation, we will identify a pointed connected space $X$ with a reduced simplicial set that represents $X$, i.e.
$ X =|X|$.} $X$. Namely (see, e.g., \cite[Section 7]{Bau}), \eqref{s4e9} arises from the short exact sequence of simplicial groups 
\begin{equation} \la{s4e10} 1 \to \lgr_2(X) \to \lgr{(X)} \to \mathbb{A}(X) \to 1\end{equation}
by applying the classifying space functor $\, B = | \bar{W}(\mbox{--}) |\,$. Here $\, \lgr_2(X) := [\lgr(X), \lgr(X)] \,$ denotes the commutator subgroup of the Kan loop group $\lgr(X)$ and $\mathbb{A}(X)$ its abelianization:
\begin{equation} \la{s4e11}  \mathbb{A}(X)  \,:=\,  (\lgr{X})_{\rm ab}\,:=\, \lgr(X)/\lgr_2(X) \end{equation}

Thus, we have $\,{\rm SP}^{\infty}(X) \simeq B\mathbb{A}(X) = |\bar{W} \mathbb{A}(X)| \,$,
which, by Kan's Theorem (see \eqref{s3e8}), implies 
\begin{equation} 
\la{s4e11} 
\Omega\,{\rm SP}^{\infty}(X) \,\simeq\, \Omega\,|\bar{W} \mathbb{A}(X)| \,\simeq \, |\lgr\bar{W} \mathbb{A}(X)| \,\simeq\, |\mathbb{A}(X)|\ .\end{equation}
Note that for any reduced simplicial set $X$, $\, {\mathbb A}(X) \cong \tilde{\Z}[X] \,$ is just
the reduced free simplicial abelian group generated by $X$.
After these preliminary remarks we can proceed with
\begin{proof}[Proof of Theorem \ref{omspinf}] As a first step we apply Proposition~\ref{ShLemma} to express
the homotopy colimit \eqref{s4e4} as a homotopy coend:
\begin{equation}\la{eee1}
\hocolim_{\ffgr_{\Z}^{\rm op}}(p^{\ast}\Gamma)\,\simeq\, \int_{\L}^{\langle n \rangle \in \ffgr}\!
\Gamma \langle n \rangle \times \underline{\Z}^n
\end{equation}
Next, observe that the bifunctor 
\begin{equation}
\la{eee2}
\Gamma \times \underline{\Z}:\ \ffgr^{\rm op} \times \ffgr \,\to\, \sSet\ ,\quad
(\langle n \rangle, \langle m \rangle)\,\mapsto\, \Gamma \langle n \rangle \times \Z^m\ ,
\end{equation}
that appears in the homotopy coend \eqref{eee1} can be factored as
$$
\ffgr^{\rm op} \times \ffgr \,\xrightarrow{\,\Gamma\, \otimes\, \bF\,}\, \sGr \,\xrightarrow{\,
(\,\mbox{--}\,)_{\rm ab}\,}\, {\rm sAb} \,\xrightarrow{{\rm forget}}\, \sSet
$$
where the first arrow is precisely the bifunctor $\,\Gamma\, \otimes\, \bF\,$ that appears in formula
\eqref{Kf} of Lemma~\ref{LKformula}, expressing the rigidification functor $K$. This last 
bifunctor takes an object $ (\langle n \rangle, \langle m \rangle) \in \ffgr^{\rm op} \times \ffgr $ to
the simplicial group $\,\amalg_{\Gamma \langle n \rangle}\,\bF_m\,$, which is given, in each simplicial degree, 
by a free product of copies of the free group $ \bF_m $ indexed by the components of the simplicial
set $\Gamma \langle n \rangle $. Hence $\,\Gamma\, \otimes\, \bF\,$ is an objectwise cofibrant diagram
in $ \sGr $, and therefore 
\begin{equation}
\la{eee3}
\L(\Gamma\, \otimes\, \bF)_{\rm ab}\,\simeq\, (\Gamma\, \otimes\, \bF)_{\rm ab} \,\cong\, \Gamma \times \underline{\Z}\ ,
\end{equation}
where $ \L(\,\mbox{--}\,)_{\rm ab} $ stands for the (left) derived functor of the abelianization functor $\,(\,\mbox{--}\,)_{\rm ab}:\, \sGr \to {\rm sAb}$. Since the abelianization functor is left Quillen, its derived functor commutes with homotopy coends (see \eqref{lfhcoend}). Hence, combining \eqref{s3e6} with \eqref{eee3}, we get
\begin{equation}
\la{eee4}
\L[\L K(\Gamma)]_{\rm ab}\,\simeq\, \int^{\langle n\rangle \in \ffgr}_{\L} \L(\Gamma\langle n \rangle \otimes \mathbb{F}\langle n \rangle)_{\rm ab} 
\,\simeq\, \int^{\langle n\rangle \in \ffgr}_{\L}(\Gamma\langle n\rangle\, \otimes\, \bF\langle n\rangle)_{\rm ab}\,\simeq\, 
\int^{\langle n\rangle \in \ffgr}_{\L}\Gamma\langle n\rangle \times \underline{\Z}^n
\end{equation}
On the other hand, $\,\L[\L K(\Gamma)]_{\rm ab} \simeq [\lgr{\bar{W}}\L K(\Gamma)]_{\rm ab} = \mathbb{A}(\bar{W}\L K(\Gamma))$, hence, by \eqref{s4e11}, we have
\begin{equation}
\la{eee5}
|\,\L[\L K(\Gamma)]_{\rm ab}\,|\,\simeq\,|\,\mathbb{A}(\bar{W}\L K(\Gamma))\,|\,\simeq\,
\Omega\,{\rm SP}^{\infty}(B \Gamma) 
\end{equation}
Combining now \eqref{eee1}, \eqref{eee4} and \eqref{eee5}, we get the desired equivalence $ X_{\Gamma} \simeq \Omega\,{\rm SP}^{\infty}(B \Gamma)\,$.
\end{proof}
Note that Theorem \ref{omspinf} combined with Lemma \ref{HXGa} implies Theorem \ref{T1} stated in the Introduction.

\subsection{Symmetric homology}
\la{S4.2}

In Section \ref{S3.3}, we defined cyclic homology of homotopy simplicial groups by associating to each $\Gamma \in \sGr^h$ a cyclic bar construction $B^{\rm cyc}\Gamma :\Delta C^{\rm op}\to \sSet$ (see Definition \ref{CBHSG}). In this section, we introduce an analogue of this construction for symmetric groups. Recall that the {\it symmetric  crossed simplicial category} $\Delta S$ is defined to be an extension of $\Delta$ that has the same objects as $\Delta$ (and $\Delta C$) with morphisms characterized by the two properties ({\it cf.} \cite[6.1.4]{L}):

\vspace{1ex}

\begin{enumerate}
\item[(Sym1)] For each $n \geqslant 0$, ${\rm Aut}_{\Delta S}([n]) \cong S_{n+1}^{\rm op}\,$, where $S_{n+1}$ is the $(n+1)$-th symmetric group.
\item[(Sym2)] Any morphism $f:[n] \to [m]$ in $\Delta S$ can be factored uniquely as the composite $f=g \circ \sigma $ with $g \in {\rm Hom}_{\Delta}([n],[m])$ and $\sigma \in {\rm Aut}_{\Delta S}([n]) \cong S_{n+1}^{\rm op}$.
\end{enumerate}

\vspace{1ex}

\noindent
There is an inclusion functor (a morphism in ${\rm Cat}$):
\begin{equation} 
\la{s4.2e1} 
\iota: \ \Delta C^{\rm op} \,\stackrel{\sim}{\to}\, \Delta C \,\hookrightarrow\, \Delta S\ , 
\end{equation}
where the first arrow is an isomorphism of categories (called Connes' duality) and the second one is induced by the natural inclusion of groups $C_{n+1} \hookrightarrow S_{n+1}$({\it cf.} \cite[6.1.11]{L}). Explicitly, the functor \eqref{s4.2e1} is given on objects by $ \iota([n]) = [n] $ and
on generators by the following formulas
\begin{equation} \la{s4.2e2}
\left. \begin{aligned}
\iota(d^n_i) & = \begin{cases}
                    s^i_{n-1} \ ,&  0 \leq i < n\\
                    s^0_{n-1} \circ (n,0,1,\ldots,n-1)\ , &  i=n
                 \end{cases}\\
\iota(s^n_j) & =  d^{j+1}_{n+1}\\
\iota(t_n) & = (n,0,1,\ldots,n-1)
\end{aligned} \right.
\end{equation}
where $d^n_i:[n] \to [n-1]$, $\,s^n_j:[n] \to [n+1]$ and $t_n:[n] \to [n]$ denote the generators of $\Delta C^{\rm op}$ dual (opposite) to the generators $d^i_n$, $s^j_n$ and $\tau_n$ of $\Delta C$, respectively.
\begin{lemma} \la{dsextn}
The functor $\,\Psi_{\rm cyc}^{\rm op}: \Delta C^{\rm op} \to \ffgr^{\rm op}$ defined by \eqref{s3e19}, \eqref{s3e20} extends through $\iota $, giving a commutative diagram of small categories
\begin{equation} \la{s4.2e3}
\begin{diagram}[small]
\Delta C^{\rm op} & \rTo^{\Psi_{\rm cyc}^{\rm op}\ } & \ffgr^{\rm op}\\
 \dInto^{\iota} & \ruTo_{\Psi_{\rm sym}} &\\
 \Delta S & &\\
\end{diagram}
\end{equation}
\end{lemma}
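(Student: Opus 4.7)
The goal is to construct a functor $\Psi_{\rm sym}: \Delta S \to \ffgr^{\rm op}$ extending $\Psi_{\rm cyc}^{\rm op}$ through $\iota$. My strategy is to use the unique factorization property (Sym2): every morphism in $\Delta S$ decomposes uniquely as $g \circ \sigma$ with $g \in {\rm Hom}_{\Delta}([n],[m])$ and $\sigma \in S_{n+1}^{\rm op}$. So it will suffice to define $\Psi_{\rm sym}$ separately on the simplicial subcategory $\Delta \subset \Delta S$ and on each automorphism group $S_{n+1}^{\rm op}$, then verify the cosimplicial identities and the crossed-simplicial compatibility that relates the two.

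On objects, I set $\Psi_{\rm sym}([n]) := \langle n+1 \rangle^{\rm o}$. For a permutation $\sigma \in S_{n+1}^{\rm op}$, I take $\Psi_{\rm sym}(\sigma)$ to be the automorphism of $\langle n+1 \rangle$ that permutes the free generators $(x_0,\ldots,x_n)$ according to $\sigma$, in the convention forced by $\Psi_{\rm sym}(t_n) = \Psi_{\rm cyc}^{\rm op}(t_n)$ on the cyclic subgroup $C_{n+1} \subset S^{\rm op}_{n+1}$. For the simplicial part, compatibility with $\iota$ together with the formulas \eqref{s3e20} and \eqref{s4.2e2} fixes $\Psi_{\rm sym}$ on most generators: the relation $\iota(s^n_j) = d^{j+1}_{n+1}$ forces $\Psi_{\rm sym}(d^i_m)$ for $1 \leq i \leq m$ to be the ``kill $x_i$" homomorphism $\bF_{m+1} \to \bF_m$ sending $x_k \mapsto x_k$ for $k<i$, $x_i \mapsto 1$, $x_k \mapsto x_{k-1}$ for $k>i$, while $\iota(d^n_i) = s^i_{n-1}$ for $0 \leq i < n$ forces $\Psi_{\rm sym}(s^j_m): \bF_{m+1} \to \bF_{m+2}$ to be the ``multiply positions $j$ and $j+1$" homomorphism. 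The only coface not in the image of $\iota$ on the nose is $d^0_m$, which I define by extending the pattern: $x_0 \mapsto 1$, $x_k \mapsto x_{k-1}$ for $k \geq 1$. A direct composition check, using $\iota(d^n_n) = s^0_{n-1} \circ (n,0,1,\ldots,n-1)$, then yields precisely $\Psi_{\rm cyc}(d^n_n): (x_0, \ldots, x_{n-1}) \mapsto (x_n x_0, x_1, \ldots, x_{n-1})$, as required for the overall commutativity of \eqref{s4.2e3}.

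What remains is a threefold verification: the cosimplicial identities among the maps $\Psi_{\rm sym}(d^i_m)$ and $\Psi_{\rm sym}(s^j_m)$ on the $\ffgr^{\rm op}$ side (routine, since these are the standard cosimplicial operations on the bar construction of any monoid), functoriality of $\Psi_{\rm sym}$ on $S^{\rm op}_{n+1}$ (immediate), and the crossed-simplicial compatibility stipulating that if $\sigma \circ g = g' \circ \sigma'$ in $\Delta S$, then the same identity must hold after applying $\Psi_{\rm sym}$ in $\ffgr^{\rm op}$. Once these hold, the equality $\Psi_{\rm sym} \circ \iota = \Psi_{\rm cyc}^{\rm op}$ follows from the definitions above by direct substitution on the three generating families $d^n_i$, $s^n_j$, $t_n$ of $\Delta C^{\rm op}$. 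The main obstacle will be the crossed-simplicial compatibility: tracking how a permutation intertwines an insertion-of-$1$ or a multiplication-of-adjacent-generators, while keeping straight the direction reversal of $\ffgr^{\rm op}$, is a finite but delicate bookkeeping exercise that ultimately pins down the precise convention for $\Psi_{\rm sym}(\sigma)$. A more conceptual alternative would be to recognize $\Psi_{\rm sym}$ as the $\ffgr^{\rm op}$-analogue of the universal symmetric functor classifying the bar construction of a commutative monoid, with $\ffgr$ viewed as the PROP of groups; but even this reduces in the end to verifying the generating identities above.
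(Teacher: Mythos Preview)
Your approach is correct but takes a genuinely different route from the paper. You work with the generators-and-relations presentation of the crossed simplicial category $\Delta S$, defining $\Psi_{\rm sym}$ separately on $\Delta$ and on the automorphism groups $S_{n+1}^{\rm op}$, and then reducing the proof to checking the crossed-simplicial exchange identities. This is sound, and your determination of $\Psi_{\rm sym}$ on the cofaces $d^i_m$ and codegeneracies $s^j_m$ from compatibility with \eqref{s4.2e2} is correct. The drawback is exactly what you flag: the crossed compatibility $\sigma \circ g = g' \circ \sigma'$ involves the combinatorics of how permutations pass through monotone maps in $\Delta S$, and verifying that your formulas respect all of these is a nontrivial case analysis that you have not actually carried out.

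The paper bypasses this entirely by adopting a global description of $\Hom_{\Delta S}([n],[m])$: every morphism $f$ is encoded as a tensor product $X_0 \otimes X_1 \otimes \cdots \otimes X_m$ of noncommutative monomials in $x_0,\ldots,x_n$ (this is Ault's notation), with composition in $\Delta S$ given by monomial substitution. One then sets $\Psi_{\rm sym}(f):\langle m+1\rangle \to \langle n+1\rangle$ to be the group homomorphism $X_j \mapsto X_j$ reading the monomials literally. Functoriality is then immediate, since composition of group homomorphisms on free generators is also substitution, and the commutativity of \eqref{s4.2e3} reduces to rewriting $\iota(d^n_i)$, $\iota(s^n_j)$, $\iota(t_n)$ in tensor notation and comparing with \eqref{s3e20}. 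What the paper's approach buys is that no crossed-simplicial identities need be checked by hand; what your approach buys is that it stays closer to the generator presentation and makes the restriction to $\Delta C^{\rm op}$ transparent from the outset. If you complete your proof, it would be worth at least sketching one representative crossed-exchange case (say a transposition past a coface) to convince the reader the bookkeeping closes.
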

\begin{proof}
In order to construct the functor $\Psi_{\rm sym}$ it is convenient to use the following notation for morphisms in $\Delta S$ ({\it cf.} \cite[Section 1.1]{Au1}). Any morphism $\,f:\, 
[n] \xrightarrow{\sigma} [n] \xrightarrow{g} [m]\,$ in $ \Delta S $ can be written uniquely as a `tensor product' of $m+1$ noncommutative monomials $\,X_0,\,X_1,\,\ldots,X_m\, $ in $n+1$ formal variables $\{x_0,x_1,\ldots,x_n\}$:
\begin{equation} \la{s4.2e4} f= X_0 \otimes X_1 \otimes \ldots \otimes X_m \end{equation}
where each $X_i$ is the product $\,x_{i_1}x_{i_2} \ldots x_{i_r}\,$ of $r=|f^{-1}(i)|$ variables whose indices $ i_k $ occur in the fibre $f^{-1}(i)$ and that are ordered in the same way as numbers in $\{\sigma(0),\ldots,\sigma(n)\}$, i.e. $\sigma(i_1) < \sigma(i_2) < \ldots < \sigma(i_r)$. For example, if $ f: [4] \to [3]\,$ is given by the composition 
$ g \circ \sigma $ in $\Delta S $, where $g \in \Hom_{\Delta}([4], [3]) $ is defined by $g(0)=g(1)=0$, $g(2)=g(3)=1$ and $g(4)=3$ and 
$ \sigma \in \Aut_{\Delta S}([4]) = S_5^{\rm op} $ is the permutation
$$ \sigma = \left(\! \begin{array}{ccccc}
                   0  & 1 &2 & 3& 4 \\
                   1 &  0 & 4 & 2 & 3\\
                       \end{array} \! \right) $$
then $f$ is represented by $\, x_1x_0 \otimes x_3x_4 \otimes 1 \otimes x_2 \,$.
The composition of morphisms $f_1 \circ f_2$ is defined by a natural substitution rule: for example, if $ f_1: [3] \to [3] $ and $f_2: [4] \to [3] $ in $ \Delta S $ are represented by
\begin{equation*}
f_1 = 1 \otimes x_0 \otimes 1 \otimes x_3x_2x_1 \ ,\quad
f_2 = x_2x_1 \otimes x_4 \otimes 1 \otimes x_0x_3 \ ,
\end{equation*}
then $ f_1 \circ f_2:\,[4] \to [3] $ can be computed as 
\begin{equation*}
\left.
\begin{aligned}
f_1 \circ f_2 &= (1 \otimes X_0 \otimes 1 \otimes X_3X_2X_1)
\circ(\underbrace{x_2 x_1}_{X_0} \otimes \underbrace{x_4}_{X_1} \otimes \underbrace{1}_{X_2} \otimes \underbrace{x_0 x_3}_{X_3})\\
              & = 1 \otimes x_2x_1 \otimes 1 \otimes (x_0x_3) \cdot 1 \cdot (x_4)\, = \, 1 \otimes x_2x_1 \otimes 1 \otimes x_0x_3x_4
\end{aligned}
\right.
\end{equation*}
With this notation, we define the functor 
\begin{equation} \la{s4.2e5} \Psi_{\rm sym}:\ \Delta S \to \ffgr^{\rm op}\end{equation}
on objects by 
$$ \Psi_{\rm sym}([n]) = \langle n+1 \rangle $$
and on morphisms by the following formula: if $ f \in \Hom_{\Delta S}([n], [m])$ is represeented by
$$
f = (x_{i_1}\ldots x_{i_r}) \otimes \cdots \otimes (x_{k_1} \ldots x_{k_s})\ ,
$$
then 
\begin{equation} \la{s4.2e6} \Psi_{\rm sym}(f): \langle m+1 \rangle \to \langle n+1 \rangle \,,\qquad X_0 \mapsto x_{i_1} \ldots x_{i_r}\,,\ \ldots\ ,\,X_m \mapsto x_{k_1} \ldots x_{k_s}\ ,
\end{equation}
where $\langle m+1 \rangle = \mathbb{F}\langle X_0,\ldots,X_m \rangle$ and $\langle n+1 \rangle = \mathbb{F}\langle x_0,\ldots,x_n\rangle$. Note that the maps \eqref{s4.2e2}
can be rewritten in this tensor notation as
\begin{equation*} 
\left. \begin{aligned}
\iota(d^n_i) & = \begin{cases}
                   x_0 \otimes \ldots \otimes x_{i-1} \otimes x_ix_{i+1} \otimes x_{i+2} \otimes \ldots \otimes x_n   \ ,&  0 \leq i < n\\
                   x_nx_0 \otimes x_1 \otimes \ldots \otimes x_{n-1}\ , &  i=n
                 \end{cases}\\
\iota(s^n_j) & =  x_0 \otimes \ldots \otimes x_j \otimes 1 \otimes x_{j+1} \otimes \ldots \otimes x_n\\
\iota(\tau_n) & = x_n \otimes x_0 \otimes x_1 \otimes \ldots \otimes x_{n-1}
\end{aligned} \right.
\end{equation*}
The commutativity of \eqref{s4.2e3} can now be checked by a trivial calculation that we leave to the reader.
\end{proof}

Having in hand the functor $\Psi_{\rm sym}:\ \Delta S \to \ffgr^{\rm op}\,$, we can now define a symmetric
bar construction in the same way as we defined the cyclic bar construction in Definition \ref{CBHSG}.
\begin{definition}
\la{defHS}
For a homotopy simplicial group $\Gamma \in \sGr^h$, its {\it symmetric bar construction} is the functor 
\begin{equation} \la{s4.2e7} B_{\rm sym}\Gamma\,:=\,\Psi^{\ast}_{\rm sym}\Gamma : \Delta S \to \sSet \end{equation} 
ans its {\it symmetric homology} is defined by
\begin{equation} \la{s4.2e8} {\rm HS}_{\ast}(k[\Gamma])\,:=\,{\rm Tor}^{\Delta S}_\ast(k,k[B_{\rm sym}\Gamma])\ . \end{equation} 
\end{definition}

\vspace{1ex}

\begin{remark}
The same argument as (in the proof of) Lemma \ref{RHBadM} shows that ${\rm HS}_\ast(k[\Gamma])$ depends only on the homotopy type of $\Gamma$ in $\sGr^h$ and hence on the homotopy type of the space $B\Gamma$.
\end{remark}


\begin{remark}
For $\Gamma$ an ordinary discrete group, the definition \eqref{s4.2e7} agrees with Fiedorowicz's original definition of the symmetric bar construction (see \cite{F} and also \cite{Au1}). In this case, formula \eqref{s4.2e8} defines the symmetric homology of the group algebra $k[\Gamma]$. Note that, unlike $B^{\rm cyc}\Gamma$ (see \eqref{s3e21}), the functor $B_{\rm sym}\Gamma:\Delta S \to \sSet$ is covariant on $\Delta S$ (which we emphasize by writing ``sym" as a subscript). 
\end{remark}


\begin{remark} \la{catmon}
To study symmetric homology it is often convenient to work with the {\it augmented} symmetric category $\Delta S_+$ which is defined by adding to $\Delta S$ the initial object $[-1]$ and morphisms $[-1] \to [n]$, one for each $n \geqslant -1$ (see \cite{Au1}). It is easy to see that the map $\Psi_{\rm sym}$ defined in Lemma \ref{dsextn} extends to $\Delta S_+ $:
\begin{equation} \la{s4.2e9} \Psi_{{\rm sym},+}:\ \Delta S_+ \to \ffgr^{\rm op}\end{equation}
by letting $\Psi_{{\rm sym},+}([-1]) := \langle 0 \rangle$. Now, the category $ \Delta S_+ $ is isomorphic 
to the category of so-called {\it finite associative sets}, $\cF({\rm as})$, introduced in \cite{PR02} (see also \cite[Section 15.4]{R20} for a detailed discussion) .
The latter is known to be a permutative category (PROP) that describes the associative unital algebras (see \cite{P2} and also \cite{R20}). It opposite category $\cF({\rm as})^{\rm op} $ describes the coassociative counital coalgebras. If we identify $\,\Delta S_{+} = \cF({\rm as})\,$, 
the restriction functor $ \Psi_{{\rm sym},+}^{\ast}: \, {\rm Mod}_k(\ffgr) \to {\rm Mod}_k[\cF(\rm as)^{\rm op}] $ associated to the opposite of \eqref{s4.2e9} takes commutative Hopf algebras viewed as functors  \eqref{halg} on
$ \ffgr $ to the underlying coassociative coalgebras viewed as functors on $ \cF(\rm as)^{\rm op} $. In other words, the morphism $\Psi_{{\rm sym},+}^{\rm op} $ is isomorphic to a morphism  of PROPs: $\, \cF(\rm as)^{\rm op} \to \ffgr \,$ that ``forgets" the algebra structure on commutative Hopf algebras. 
\end{remark}

\subsection{Symmetric homology vs representation homology}
\la{S4.3}
Recall that in Section \ref{S3.4}, we constructed the derived character map $\Tr(\Gamma)_\ast $ relating the cylic homology  of $ \Gamma $ to its (one-dimensional) representation homology:
\begin{equation}\la{s4.3e3} 
\Tr(\Gamma)_\ast:\ \HC_\ast(k[\Gamma]) \to \HR_\ast(k[\Gamma]) \end{equation}
On the other hand, as a consequence of Lemma \ref{dsextn}, we have a restriction map
\begin{equation}
\la{s4.3e2} 
\iota^\ast:\ \HC_\ast(k[\Gamma]) = {\rm Tor}^{\Delta C^{\rm op}}_\ast(k,\,k[B^{\rm cyc}\Gamma])\, \to\, \Tor^{\Delta S}_{\ast}(k,\,k[B_{\rm sym}\Gamma]) = {\rm HS}_{\ast}(k[\Gamma])\ . 
\end{equation}
induced by the isomorphism of cyclic spaces
\begin{equation}
\la{s4.3e1} 
B^{\rm cyc}\Gamma \,\cong\, \iota^{\ast} B_{\rm sym}\Gamma 
\end{equation}
The next proposition shows that the derived character map \eqref{s4.3e3} factors through \eqref{s4.3e2}, thus relating representation homology to symmetric homology.

\begin{proposition} 
\la{ftsym}
For any homotopy simplicial group $\Gamma \in \sGr^h \!$, there is a natural map
\begin{equation}\la{s4.3e4} \tilde{\Psi}_{\rm sym}^{\ast}: \ {\rm HS}_\ast(k[\Gamma])\, \to\, \HR_{\ast}(k[\Gamma]) \end{equation}
 such that 
 \begin{equation}
 \la{s4.3e4p} 
 \begin{diagram}[small]
 \HC_*(k[\Gamma])& & \rTo^{\Tr(\Gamma)_\ast} & & \HR_*(k[\Gamma])\\
& \rdTo_{\iota^*} &                         & \ruTo_{\tilde{\Psi}_{\rm sym}^{\ast}} &\\
&                  & \HS_*(k[\Gamma])        & &
 \end{diagram}
 \end{equation}
\end{proposition}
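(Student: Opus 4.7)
The plan is to exploit the distinctive feature of $\G_m$ among affine algebraic groups: $\G_m$ is abelian, so its coordinate Hopf algebra $\cO(\G_m) = k[t,t^{-1}]$ is cocommutative in addition to being commutative. This extra structure will allow the cocyclic morphism $\Delta_{\G_m}(t)$ of Lemma \ref{Lcocyc} to extend across the larger category $\Delta S$, and combined with Lemma \ref{LTor} applied to $\Psi_{\rm sym}$, this extension will supply the desired factorization of $\Tr(\Gamma)_\ast$ through $\HS_\ast(k[\Gamma])$.

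First, I would restrict the covariant $\ffgr$-module $\underline{\cO(\G_m)}:\,\ffgr \to \Mod_k$ along the opposite of the functor from Lemma \ref{dsextn}, i.e., along $\Psi_{\rm sym}^{\rm op}:\,\Delta S^{\rm op} \to \ffgr$, to obtain a $\Delta S^{\rm op}$-module $\,B_{\rm sym}[\cO(\G_m)] := \underline{\cO(\G_m)} \circ \Psi_{\rm sym}^{\rm op}\,$. Passing to opposites in the diagram \eqref{s4.2e3} yields $\Psi_{\rm cyc} = \Psi_{\rm sym}^{\rm op} \circ \iota^{\rm op}$, so the restriction of $B_{\rm sym}[\cO(\G_m)]$ along $\iota^{\rm op}:\,\Delta C \to \Delta S^{\rm op}$ recovers the cocyclic module $B_{\rm cyc}[\cO(\G_m)]$. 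I next claim that the elements $\Delta^{(m)}_{\G_m}(t) = t^{\otimes(m+1)} \in \cO(\G_m)^{\otimes(m+1)}$ ($m \ge 0$) used in the proof of Lemma \ref{Lcocyc} to build $\Delta_{\G_m}(t)$ in fact assemble to a morphism of $\Delta S^{\rm op}$-modules
\begin{equation*}
\Delta^{\rm sym}_{\G_m}(t):\ k\,\to\,B_{\rm sym}[\cO(\G_m)]
\end{equation*}
whose restriction along $\iota^{\rm op}$ recovers $\Delta_{\G_m}(t)$. To check the claim, take any $f \in \Hom_{\Delta S}([n],[m])$, written in the tensor-product notation \eqref{s4.2e4} as $f = X_0 \otimes \cdots \otimes X_m$ where each $X_i$ is a monomial in a subset of the generators $x_0,\ldots,x_n$; the induced pullback $\cO(\G_m)^{\otimes(m+1)} \to \cO(\G_m)^{\otimes(n+1)}$ sends the function $(h_0,\ldots,h_m) \mapsto h_0 \cdots h_m$ (which is $t^{\otimes(m+1)}$) to the function $(g_0,\ldots,g_n) \mapsto X_0(g) \cdot X_1(g) \cdots X_m(g)$. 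Since the fibres of $f$ partition $[n]$, each $x_j$ appears exactly once across the $X_i$'s, and since $\G_m$ is abelian we may reorder the product to obtain $g_0 g_1 \cdots g_n$, which is precisely $t^{\otimes(n+1)}$, as required.

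Given $\Delta^{\rm sym}_{\G_m}(t)$, I would then define $\tilde{\Psi}_{\rm sym}^{\ast}$ as the composition
\begin{equation*}
\HS_\ast(k[\Gamma])\,\cong\, \Tor^{\Delta S^{\rm op}}_\ast(k[B_{\rm sym}\Gamma],\,k)\, \xrightarrow{(\Delta^{\rm sym}_{\G_m}(t))_\ast}\, \Tor^{\Delta S^{\rm op}}_\ast(k[B_{\rm sym}\Gamma],\,B_{\rm sym}[\cO(\G_m)])\, \xrightarrow{(\Psi_{\rm sym}^{\rm op})^\ast}\, \HR_\ast(k[\Gamma])\,,
\end{equation*}
in which the first isomorphism is Tor-symmetry and the last map is provided by Lemma \ref{LTor} applied to $\Psi_{\rm sym}^{\rm op}$ (using the identifications $(\Psi_{\rm sym}^{\rm op})^\ast k[\Gamma] \cong k[B_{\rm sym}\Gamma]$ and $(\Psi_{\rm sym}^{\rm op})^\ast \underline{\cO(\G_m)} = B_{\rm sym}[\cO(\G_m)]$). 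The commutativity of the triangle \eqref{s4.3e4p} then reduces to the naturality of Lemma \ref{LTor}: the two compatibilities $(\iota^{\rm op})^\ast \Delta^{\rm sym}_{\G_m}(t) = \Delta_{\G_m}(t)$ and $(\iota^{\rm op})^\ast \circ (\Psi_{\rm sym}^{\rm op})^\ast = \Psi_{\rm cyc}^\ast$ together imply that $\tilde{\Psi}_{\rm sym}^\ast \circ \iota^\ast$ agrees with the original definition of $\Tr(\Gamma)_\ast$ given in Section \ref{S3.4}.

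The main obstacle is the equivariance check carried out above. In Lemma \ref{Lcocyc} the compatibility with cyclic rotations required only $\Ad$-invariance of $P \in \cO(G)^G$, but in $\Delta S^{\rm op}$ there are many more generators (arbitrary permutations of tensor factors combined with multiplications in any order), and the element $\Delta^{(m)}_G(P)$ is preserved by all of them only under the much stronger condition $P(g_0 \cdots g_m) = P(g_{\sigma(0)} \cdots g_{\sigma(m)})$ for every $\sigma \in S_{m+1}$. For a nonabelian group such as $\GL_n$ with $n \ge 2$, this fails even for $\Ad$-invariant $P$, which is precisely why the factorization through symmetric homology is special to the one-dimensional character map. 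Once the equivariance has been verified, the remainder of the proof is a routine manipulation of the Tor naturality formalism already set up in Section \ref{S3.4}.
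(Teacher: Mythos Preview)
Your argument is correct, but it takes a genuinely different route from the paper's. Both proofs rest on the same elementary observation---that for an \emph{abelian} group scheme the element $t^{\otimes(m+1)}\in\cO(\G_m)^{\otimes(m+1)}$ (equivalently, the vector $(1,1,\ldots,1)\in\Z^{m+1}$) is invariant not merely under cyclic rotations but under all operations coming from $\Delta S$---yet they package this observation differently. You encode it as a morphism of $\Delta S^{\rm op}$-modules $\Delta^{\rm sym}_{\G_m}(t):k\to B_{\rm sym}[\cO(\G_m)]$ extending the cocyclic morphism of Lemma~\ref{Lcocyc}, and then apply Lemma~\ref{LTor} to $\Psi_{\rm sym}^{\rm op}$ directly. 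The paper instead encodes it as a \emph{lift of functors}: it shows that $\Psi_{\rm sym}:\Delta S\to\ffgr^{\rm op}$ lifts along the projection $p^{\rm op}:\ffgr_{\Z}^{\rm op}\to\ffgr^{\rm op}$ to a functor $\tilde\Psi_{\rm sym}:\Delta S\to\ffgr_{\Z}^{\rm op}$, $[n]\mapsto(\langle n+1\rangle;1,\ldots,1)$, and then reaches $\HR_\ast$ via the Shapiro isomorphism $\Tor^{\ffgr_{\Z}}_\ast(k[p^\ast\Gamma],k)\cong\Tor^{\ffgr}_\ast(k[\Gamma],k[\underline{\Z}])$ of Lemma~\ref{Shap}. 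Your approach is arguably more transparent algebraically and makes the special role of abelianness very explicit; the paper's approach buys something else that matters for what follows: because $\tilde\Psi_{\rm sym}$ is an honest morphism in $\Cat$, it immediately induces a map of homotopy colimits of \emph{spaces}, $|\hocolim_{\Delta S}(B_{\rm sym}\Gamma)|\to|\hocolim_{\ffgr_{\Z}^{\rm op}}(p^\ast\Gamma)|$, which is exactly the input needed for Corollary~\ref{topch} and the topological analysis in Section~\ref{S55}. Your coefficient-side construction does not directly produce such a space-level map.
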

\begin{proof}
As our notation suggests, the map \eqref{s4.3e4} is actually induced by a morphism $\tilde{\Psi}_{\rm sym}$ in $\Cat$. 
%
%
We construct $\tilde{\Psi}_{\rm sym}$ by lifting the functor $\Psi_{\rm sym}$ of Lemma \ref{dsextn} to the
(opposite) category of elements of the abelianization functor \eqref{s4e1}:
\begin{equation}\la{s4.5e5}
\begin{diagram}[small]
 & & & & \ffgr_{\Z}^{\rm op}\\
 & & & \ruTo^{\tilde{\Psi}_{\rm sym}} & \dTo_{p^{\rm op}}\\
\Delta C^{\rm op} & \rTo^{\iota}& \Delta S &  \rTo^{\,\Psi_{\rm sym}\ } & \ffgr^{\rm op}
\end{diagram}
\end{equation}
The existence of such a lifting is a consequence of the following observation. Consider the composition of functors 
\begin{equation} \la{s4.3e6} \begin{diagram}[small] \Delta S^{\rm op} & \rTo^{\Psi_{\rm sym}^{\rm op}} & \ffgr & \rTo^{(\mbox{--})_{\rm ab}} & {\rm Ab} \end{diagram} \end{equation}
that takes an object $[n] \in \Delta S$ to the abelian group $\Z^{n+1}$. If we represent a morphism $f:[n] \to [m]$ in $\Delta S$ using the tensor notation \eqref{s4.2e4},
then $ \Psi_{\rm sym}^{\rm op}(f)_{\rm ab}:\Z^{m+1} \to \Z^{n+1}$, the value of \eqref{s4.3e6} on $f$, is represented by an integral $(n+1) \times (m+1)$-matrix whose rows are indexed by $ 0 \leqslant i \leqslant n$ and columns by $0 \leqslant j \leqslant m$, and the $j$-th column consists entirely of $0$'s and $1$'s, with the $1$'s occurring in positions indicated by the elements of $f^{-1}(j)$. For example, if $f:[4] \to [3]$ in $\Delta S$ is represented by the product $x_1x_0 \otimes x_3 x_4 \otimes 1 \otimes x_2$, then 
$ \Psi_{\rm sym}^{\rm op}(f)_{\rm ab}:\,\Z^{4} \to \Z^{5}$ is given by
$$ \Psi_{\rm sym}^{\rm op}(f)_{\rm ab} = \left( 
\begin{array}{cccc}
1 & 0 & 0 & 0\\
1 & 0 & 0 & 0\\
0 & 0 & 0 & 1\\
0 & 1 & 0 & 0\\
0 & 1 & 0 & 0\\
\end{array}
\! \! \right)\ .$$
Observe that for any morphism $f$ in $\Delta S$ the matrix $\Psi_{\rm sym}^{\rm op}(f)_{\rm ab} $ thus defined has exactly one nonzero entry in each row and that entry is $1$. Hence $\Psi_{\rm sym}^{\rm op}(f)_{\rm ab}$ maps the vector $(1,1,\ldots,1)^t \in \Z^{m+1}$ to the vector $(1,1,\ldots,1)^t \in \Z^{n+1}$. This shows that there is a well-defined functor 
\begin{equation} \la{s4.3e7} 
\tilde{\Psi}_{\rm sym}:\ \Delta S \to \ffgr_{\Z}^{\rm op}\ ,
\quad [n] \mapsto (\langle n+1 \rangle; 1,1,\ldots, 1)\ ,
\end{equation}
that makes the diagram \eqref{s4.5e5} commutative.
It follows from \eqref{s4.5e5} that 
$$
k[B_{\rm sym}\Gamma] = \Psi_{\rm sym}^{\ast}(k[\Gamma]) = {\tilde{\Psi}}_{\rm sym}^{\ast}(k[p^{\ast}\Gamma])
$$ 
Hence, by Lemma \ref{LTor}, the functor \eqref{s4.3e7} induces a natural map
\begin{equation} 
\la{s4.3e9} 
{\rm HS}_{\ast}(k[\Gamma]) = \Tor^{\Delta S}_{\ast}(k, k[B_{\rm sym} \Gamma]) 
\xrightarrow{{\tilde{\Psi}}_{\rm sym}^{\ast}}  \Tor^{\ffgr^{\rm op}_{\Z}}_{\ast}(k,\,k[p^*\Gamma])\, .
\end{equation}
We claim that if the target of the map \eqref{s4.3e9} is identified with the representation homology 
of $k[\Gamma]$ via the Shapiro Isomorphism (see Lemma~\ref{Shap}), then the required factorization property \eqref{s4.3e4p} holds. To verify this we fix a projective resolution 
$ Q \xrightarrow{\sim} k[\Gamma] $ of $ k[\Gamma] $ in $ \Mod_k(\ffgr^{\rm op}) $.
Then $ p^*(Q) \xrightarrow{\sim} p^*k[\Gamma] = k[p^*\Gamma] $ gives a projective resolution of $ k[p^*\Gamma] $ in $ \Mod_k(\ffgr_{\Z}^{\rm op}) $, and the Shapiro Isomorphism 
$$
\Tor^{\ffgr_{\Z}}_{\ast}(k[p^*\Gamma], \,k) \xrightarrow{\sim} 
\Tor^{\ffgr}_{\ast}(k[\Gamma], \,p_!(k))
$$
is induced by the composition
$$
p^*(Q) \otimes_{\ffgr_{\Z}} k 
\xrightarrow{\id \otimes \varepsilon_k} 
p^*(Q) \otimes_{\ffgr_{\Z}} p^* p_!(k)  
\xrightarrow{p^*}  
Q \otimes_{\ffgr} p_!(k)
$$
where the first map is given by the adjunction unit  $\,\varepsilon: \id \to p^* p_! \,$ and the second is
the restriction map via $ p $. Explicitly, using the definition \eqref{s3e11} of functor tensor products,  we can 
represent the above composite map as
\begin{equation}
\la{Shch}
\Moplus_{(\langle n\rangle;\, k_1, \ldots, k_n) \in \ffgr_{\Z}} 
Q\langle n\rangle   \ \to \ \Moplus_{\langle n\rangle \in \ffgr}  
Q\langle n\rangle \otimes k[\Z^{n}]\ ,\quad 
(v_n)_{(\langle n\rangle;\, k_1, \ldots, k_n) \in \ffgr_{\Z}} \mapsto 
(v_n \otimes (k_1, \ldots, k_n))_{\langle n\rangle \in \ffgr}
\end{equation}
where $ v_n \in Q \langle n \rangle $ and the subscripts denote the indices of the corresponding components 
of direct sums. Now, using the same resolution $ Q $, 
we can write explicitly  the composition of maps \eqref{s4.3e2} and \eqref{s4.3e9}: 
$$
\Psi^{\ast}_{\rm cyc}(Q) \otimes_{\Delta C} k 
\,\xrightarrow{\iota^*}\,
\Psi^{\ast}_{\rm sym}(Q) \otimes_{\Delta S^{\rm op}} k
\,\xrightarrow{{\tilde{\Psi}}_{\rm sym}^{\ast}} \, 
p^*(Q) \otimes_{\ffgr_{\Z}} k
$$
At the level of chain complexes, this last composition is induced by the map
\begin{equation}
\la{mapcom}
\Moplus_{[m] \in \Delta C} Q\langle m+1\rangle \ \to\
\Moplus_{[m] \in \Delta S^{\rm op}} Q\langle m+1\rangle\ \to\
\Moplus_{(\langle n \rangle; k_1, \ldots k_n) \in \ffgr_{\Z}} Q\langle n \rangle 
\end{equation}
that takes
$\,
(v_{m+1})_{[m] \in \Delta C}\, \mapsto\, (v_{m+1})_{[m] \in \Delta S^{\rm op}} 
\, \mapsto\, (v_{m+1})_{(\langle m+1 \rangle;\, 1, 1, \ldots, 1) \in \ffgr_{\Z}}
\,$.
Combining \eqref{Shch} and \eqref{mapcom}, we see that the resulting map 
$$
\Moplus_{[m] \in \Delta C} Q\langle m+1\rangle \ \to \ \Moplus_{\langle n\rangle \in \ffgr}  
Q\langle n\rangle \otimes k[\Z^{n}]\ ,\quad 
(v_{m+1})_{[m] \in \Delta C}\ \mapsto\ 
(v_{m+1} \otimes (1, 1, \ldots, 1))_{\langle m+1 \rangle \in \ffgr}
$$
coincides exactly with the map \eqref{chiGP2} representing the derived character  
$ \chi_{\GL_1, \Tr_1}(\Gamma)_\ast = \Tr(\Gamma)_{\ast} $. This finishes the proof of
the proposition.
\end{proof}

\begin{remark}\la{Remlift}
The proof of Proposition~\ref{ftsym} shows that,  
apart from \eqref{s4.3e7}, any functor of the form
\begin{equation}
\la{liftm} 
\tilde{\Psi}^{(m)}_{\rm sym}:\ \Delta S \to \ffgr_{\Z}^{\rm op}\ ,
\quad [n] \mapsto (\langle n+1 \rangle;\, m, m,\ldots, m)\ ,
\end{equation}
where $ m \in \Z $ is a fixed integer, satisfies the lifting property \eqref{s4.5e5}. It is easy to see that there are no other solutions to this lifting problem. Among \eqref{liftm} the functor $\tilde{\Psi}^{(0)}_{\rm sym}$ corresponding to $m=0$ is the only one that factors through $\ffgr^{\rm op}$: $\,\tilde{\Psi}^{(0)}_{\rm sym} = s^{\rm op} \circ {\Psi}_{\rm sym} $, where $ s: \ffgr \into \ffgr_{\Z} $ is the `zero' section of $p$.
\end{remark}

Next, we observe that the linear maps factoring  $ \Tr(\Gamma)_* $ in \eqref{s4.3e4p} arise 
(on homology) from the natural maps of topological spaces  induced by the 
functors \eqref{s4.2e1} and \eqref{s4.3e7} ({\it cf.} Lemma \ref{HXGa}):
\begin{equation} \la{s4.3e10} 
|{\rm hocolim}_{\Delta C^{\rm op}}\, (B^{\rm cyc}\Gamma)| \,\xrightarrow{\iota^{\ast}} \, 
|{\rm hocolim}_{\Delta S} \,(B_{\rm sym}\Gamma)| \,\xrightarrow{\tilde{\Psi}_{\rm sym}^{\ast}}\, 
|{\rm hocolim}_{\ffgr_{\Z}^{\rm op}} \,(p^{\ast}\Gamma)| 
\end{equation}
(Here, abusing notation, we denote these topological maps by the same symbols as the corresponding 
linear maps.) By Theorem \ref{omspinf}, we know that
\begin{equation} \la{s4.3e11} |{\rm hocolim}_{\ffgr_{\Z}^{\rm op}} (p^{\ast}\Gamma)| \,\simeq\, 
\Omega\,{\rm SP}^{\infty}(B\Gamma)
\end{equation}
On the other hand, by theorems of Goodwillie (see \cite[Theorem 7.2.4]{L}) and Fiedorowicz \cite{F} (see \cite[Section 5.3]{Au1}), 
\begin{eqnarray}
\la{s4.3e12} |{\rm hocolim}_{\Delta C^{\rm op}} (B^{\rm cyc}\Gamma)| & \simeq & ES^1 \times_{S^1} \cL(B\Gamma)\,,\\*[1ex]
\la{s4.3e13}  |{\rm hocolim}_{\Delta S} (B_{\rm sym}\Gamma)| & \simeq & \Omega \Omega^{\infty} \Sigma^{\infty} (B\Gamma)\,,
\end{eqnarray}
where $\cL(B\Gamma) := {\rm Map}(S^1,B\Gamma)$  and $\,\Omega^{\infty}\Sigma^{\infty}(B\Gamma) := \hocolim_{n\to \infty}\,\Omega^n\Sigma^n(B\Gamma)\,$ denote the free loop space and the infinite loop space of $B\Gamma$, respectively. Combining \eqref{s4.3e10} with equivalences \eqref{s4.3e11}, \eqref{s4.3e12} and 
\eqref{s4.3e13}, we can thus refine the result of Proposition~\ref{ftsym} as follows:
\begin{corollary}
\la{topch}
The derived character map 
$$\Tr(\Gamma)_\ast: \HC_\ast(k[\Gamma]) \,\xrightarrow{\iota^*}\, {\rm HS}_{\ast}(k[\Gamma]) 
\,\xrightarrow{\tilde{\Psi}_{\rm sym}^{\ast}}\, \HR_\ast(k[\Gamma]) $$
is induced on homology by a natural map of topological spaces in $ \Ho(\Top_*)$:
\begin{equation}  
ES^1 \times_{S^1} \cL(B\Gamma) \,\xrightarrow{\CS_{B \Gamma}}\,
\Omega \Omega^{\infty} \Sigma^{\infty} (B\Gamma)\, \xrightarrow{\SR_{B \Gamma}}\, 
\Omega {\rm SP}^{\infty}(B\Gamma) 
\la{s4.3e14} 
\end{equation}
\end{corollary}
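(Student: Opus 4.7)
The plan is to combine the algebraic factorization of Proposition~\ref{ftsym} with three independent identifications of homotopy colimits with familiar topological spaces; once the bookkeeping is set up, the argument is essentially formal given the material already in place.

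The first step is to interpret each of the three Tor-groups as the singular $k$-homology of the geometric realization of a homotopy colimit of simplicial sets. Combining the Bousfield--Kan formula \eqref{BK} with the $k$-linearization of the nerve of the corresponding category of elements yields, for any small category $\cC$ and any covariant set-valued functor $F:\cC\to\sSet$, a natural isomorphism $\Tor^{\cC}_{\ast}(k,\,k[F])\cong \H_{\ast}(|\hocolim_{\cC}F|,\,k)$, and similarly for contravariant functors with $\cC$ replaced by $\cC^{\rm op}$. Applied to the diagrams $B^{\rm cyc}\Gamma$, $B_{\rm sym}\Gamma$ and $p^{\ast}\Gamma$, this gives the required identifications of $\HC_{\ast}(k[\Gamma])$, $\HS_{\ast}(k[\Gamma])$ and $\HR_{\ast}(k[\Gamma])$ with the singular $k$-homologies of the three homotopy colimits appearing in \eqref{s4.3e10}.

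Next I would produce the topological maps themselves. The functors $\iota:\Delta C^{\rm op}\to\Delta S$ and $\tilde{\Psi}_{\rm sym}:\Delta S\to\ffgr_{\Z}^{\rm op}$ of \eqref{s4.2e1} and \eqref{s4.3e7} satisfy the pullback identities $B^{\rm cyc}\Gamma\cong\iota^{\ast}B_{\rm sym}\Gamma$ and $B_{\rm sym}\Gamma=\tilde{\Psi}_{\rm sym}^{\ast}(p^{\ast}\Gamma)$. By the $2$-functoriality of homotopy left Kan extensions (Theorem~\ref{CisT}(1)), these identities induce a composable chain of natural maps of homotopy colimits, which after geometric realization gives precisely the sequence of spaces displayed in \eqref{s4.3e10}. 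By naturality of the identifications of the previous step, the induced maps on $\H_{\ast}(-,k)$ coincide with the restriction maps $\iota^{\ast}$ and $\tilde{\Psi}_{\rm sym}^{\ast}$ from Proposition~\ref{ftsym}; combined with that proposition, the composite therefore realizes $\Tr(\Gamma)_{\ast}$ on homology.

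It remains to identify the three homotopy types: the rightmost term is $\Omega\,\SP^{\infty}(B\Gamma)$ by our Theorem~\ref{omspinf}, the leftmost is $ES^{1}\times_{S^{1}}\cL(B\Gamma)$ by Goodwillie's theorem \cite[Thm.~7.2.4]{L}, and the middle term is $\Omega\,\Omega^{\infty}\Sigma^{\infty}(B\Gamma)$ by Fiedorowicz's theorem \cite{F} (see \cite{Au1}). Substituting these weak equivalences into the chain above yields the diagram \eqref{s4.3e14} in $\Ho(\Top_{\ast})$, completing the argument. The only genuinely delicate point --- and the main obstacle to making the proof fully self-contained --- is the verification that the two topological maps produced by the construction above are homotopic to the specific natural transformations usually called the Carlsson--Cohen map $\CS_{B\Gamma}$ and the looped canonical transformation $\SR_{B\Gamma}:\Omega\,\Omega^{\infty}\Sigma^{\infty}(B\Gamma)\to\Omega\,\SP^{\infty}(B\Gamma)$; for Corollary~\ref{topch} as stated, one may simply \emph{define} $\CS_{B\Gamma}$ and $\SR_{B\Gamma}$ as these maps and defer the comparison with the classical constructions to Propositions~\ref{csx} and~\ref{srx} in Section~\ref{S5}.
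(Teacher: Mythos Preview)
Your proposal is correct and follows essentially the same route as the paper: the paper likewise observes that the algebraic maps of Proposition~\ref{ftsym} are induced on $k$-homology by the chain of homotopy colimit maps \eqref{s4.3e10}, then identifies the three terms via Theorem~\ref{omspinf}, Goodwillie's theorem, and Fiedorowicz's theorem, and \emph{defines} $\CS_{B\Gamma}$ and $\SR_{B\Gamma}$ to be the resulting maps, postponing the comparison with the classical constructions to Propositions~\ref{csx} and~\ref{srx}. Your explicit invocation of the Bousfield--Kan formula and $2$-functoriality makes the bookkeeping slightly more transparent than the paper's terse reference to Lemma~\ref{HXGa}, but the content is the same.
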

In the next section, we will describe the maps $\CS $ and $\SR $ in topological terms in two ways: using the classical `little cubes' operads and the Goodwillie calculus of homotopy functors. 

\subsection{Generalization to monoids}
\la{S4.4}
All results of this section generalize naturally to (simplicial) monoids. 
We briefly outline this generalization as we will need it in Section~\ref{S5.3}.
Instead of $\ffgr$, we start with the category $\fM \subset {\rm Mon} $ whose objects are 
finitely generated free monoids\footnote{Abusing notation, we will use the same symbols to denote the objects of $\fM$ and $\ffgr$.} $ \langle n \rangle $, one for each $ n \ge 0 $.
In this case, the abelianization functor reads
$$
\underline{\N}:\, \fM \to \Set\,,\qquad \langle n \rangle \mapsto \N^n
$$
where $\N$ is the set of natural numbers, i.e. the underlying set of the free abelian monoid of rank one. 
The associated category of elements $\fM_{\N}\,:=\, \fM\! \smallint \underline{\N}$ has an explicit description similar to that of $\ffgr_{\Z}$: its objects are $ (\langle n \rangle; \, k_1,\ldots,k_n))$, where $\langle n \rangle$ is the free monoid on $n$ generators and $(k_1,\ldots,k_n) \in \N^n $. Any simplicial monoid
$M$ gives a functor $M: \fM^{\rm op} \to \sset $ that restricts to $ \fM_{\N}^{\rm op}$ via the canonical projection $p:\,\fM_{\N} \to \fM$. The analogue (generalization) of Theorem~\ref{omspinf} says:
\begin{proposition}\la{propmon}
For any simplicial monoid $M$, there is a weak equivalence in $\Top_*$:
\begin{equation}\la{spacemon}
|{\rm hocolim}_{\fM_{\N}^{\rm op}} (p^{\ast}M)| \simeq \Omega\,{\rm SP}^{\infty}(BM)\,,
\end{equation}
where $ BM $ is the classifying space of $M$.
\end{proposition}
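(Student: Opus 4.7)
The plan is to follow the blueprint of the proof of Theorem~\ref{omspinf}, with simplicial monoids, free monoids, and $\underline{\N}$ playing the roles of simplicial groups, free groups, and $\underline{\Z}$. First, I would apply the Shapiro Lemma (Proposition~\ref{ShLemma}) with $\cC = \fM$, $F = \underline{\N}$, and $X = M$, rewriting the homotopy colimit as a homotopy coend
\begin{equation*}
\hocolim_{\fM_{\N}^{\rm op}}(p^*M) \,\simeq\, \int^{\langle n \rangle \in \fM}_{\L} M\langle n\rangle \otimes \underline{\N}^n.
\end{equation*}
I would then factor the bifunctor appearing in this coend in analogy with \eqref{eee3}: letting $\mathbb{F}_{\fM}: \fM \hookrightarrow \sMon$ denote the canonical inclusion of free monoids and $(-)_{\rm ab}$ denote the abelianization functor from $\sMon$ to simplicial abelian monoids (which is left Quillen as a left adjoint to the inclusion), the bifunctor $M \otimes \mathbb{F}_{\fM}: \fM^{\rm op} \times \fM \to \sMon$ sends $(\langle n\rangle, \langle m\rangle)$ to the coproduct $\amalg_{M\langle n\rangle} \mathbb{F}_{\fM}\langle m\rangle$ in $\sMon$, whose abelianization $\oplus_{M\langle n\rangle} \N^m$ has underlying simplicial set exactly $M\langle n\rangle \times \N^m$. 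Since this coproduct is objectwise cofibrant in $\sMon$, the factorization $M \otimes \underline{\N} \simeq \L(M \otimes \mathbb{F}_{\fM})_{\rm ab}$ (through the forgetful functor to $\sSet$) remains valid at the derived level.

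Next, I would invoke the strict-monoid analog of Lemma~\ref{LKformula} --- valid for any one-sorted algebraic theory and treated in the authors' forthcoming work \cite{BR22b} --- which exhibits a strict simplicial monoid $M$ as the homotopy coend of its canonical monadic resolution:
\begin{equation*}
M \,\simeq\, \int^{\langle n \rangle \in \fM}_{\L} M\langle n\rangle \otimes \mathbb{F}_{\fM}\langle n\rangle \quad \text{in } \sMon.
\end{equation*}
Combining this with the fact that left Quillen functors commute with homotopy coends (formula \eqref{lfhcoend}) would yield
\begin{equation*}
\int^{\langle n \rangle \in \fM}_{\L} M\langle n\rangle \otimes \underline{\N}^n \,\simeq\, \L M_{\rm ab},
\end{equation*}
the derived abelianization of $M$ viewed (via the forgetful functor) as a simplicial set.

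The main obstacle will be the final identification $|\L M_{\rm ab}| \simeq \Omega\,\SP^\infty(BM)$. In the group case (Theorem~\ref{omspinf}) this step rests on Kan's equivalence $\lgr B\Gamma \simeq \Gamma$ and the chain $|\L \Gamma_{\rm ab}| \simeq |\mathbb{A}(B\Gamma)| \simeq \Omega\,\SP^\infty(B\Gamma)$. For a simplicial monoid $M$, however, $\lgr BM$ is the group completion $M^{\rm gp}$ rather than $M$ itself, so $\Omega\,\SP^\infty(BM) \simeq |(M^{\rm gp})_{\rm ab}| = |(M_{\rm ab})^{\rm gp}|$, the group completion of the abelianization. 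To close this gap I would invoke the McDuff--Segal group completion theorem applied to the connected classifying space $BM$, which under the relevant connectedness/grouplike hypotheses identifies $|\L M_{\rm ab}|$ with its group completion; alternatively, I would reduce directly to Theorem~\ref{omspinf} via the natural unit $M \to \lgr BM$ and the induced comparison between the homotopy colimits over $\fM_{\N}^{\rm op}$ and $\ffgr_{\Z}^{\rm op}$ arising from the abelianization-preserving functor $\fM \to \ffgr$.
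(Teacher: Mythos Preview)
Your first half is exactly what the paper does: apply Proposition~\ref{ShLemma} with $\cC=\fM$, $F=\underline{\N}$, factor the bifunctor through abelianization of the free-monoid bifunctor, and invoke the coend formula for the rigidification functor to conclude $\hocolim_{\fM_{\N}^{\rm op}}(p^*M)\simeq \L(M)_{\rm ab}$.

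Where you diverge is the final identification $|\L M_{\rm ab}|\simeq \Omega\,\SP^\infty(BM)$, and here your two suggested routes both have real gaps. The McDuff--Segal theorem is a statement about homology (localizing $H_*(M)$ at $\pi_0(M)$), not a space-level weak equivalence, and it does not by itself identify $|\L M_{\rm ab}|$ with its group completion for a general simplicial monoid; one would need $M_{\rm ab}$ to be grouplike or to argue separately that the relevant comparison is an equivalence. Your alternative --- reducing to Theorem~\ref{omspinf} via the unit $M\to \lgr BM$ --- works only when $M$ is already (weakly equivalent to) a simplicial group: that is precisely the content of Corollary~\ref{cor33} and Remark~\ref{sgrsmon}, but it does not establish the proposition for an arbitrary monoid, since the induced map $\hocolim_{\fM_{\N}^{\rm op}}(p^*M)\to \hocolim_{\fM_{\N}^{\rm op}}(p^*l^*\lgr BM)$ has no reason to be an equivalence in general.

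The paper avoids the Kan loop group entirely at this step. It resolves $M$ by the two-sided bar construction $B_*(\uC_1,\uC_1,M)\xrightarrow{\sim} M$ associated to the James monad $\uC_1=J$, observes that levelwise abelianization replaces $\uC_1$ by the Dold--Thom monad $\uC_0=\SP^\infty$, and then invokes the forward-referenced Lemma~\ref{maybar}, which identifies $B(\uC_0,\uC_1,M)\simeq \Omega\,\SP^\infty(BM)$ via the chain $\SP^\infty\simeq \Omega\,\SP^\infty\Sigma$ (Dold--Thom), commutation of the bar construction with $\Omega\,\SP^\infty$, and Fiedorowicz's identification $B(\Sigma,\uC_1,M)\simeq BM$. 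This operadic route is what substitutes for the Kan-loop-group step \eqref{s4e11} in the group case, and it is the missing idea in your proposal.
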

\begin{proof}
The same argument as in the proof of Theorem \ref{omspinf} --- based on Proposition~\ref{ShLemma} --- shows
$$
{\rm hocolim}_{\fM_{\N}^{\rm op}} (p^{\ast}M) \simeq \L(M)_{\rm ab}
$$
where $\,\L(\,-\,)_{\rm ab}\,$ denotes the derived abelianization functor on simplicial monoids.
To compute this last functor, instead of Kan loop group, we will use the 2-sided (simplicial) bar resolution
\eqref{simpbar}: $\, B_*(\uC_1, \uC_1, M) \xrightarrow{\sim} M\,$ in $\sset_*$, where $ \uC_1 $ is the monad associated to the (simplicial analogue of) little $1$-cube operad (see \eqref{Comon}). Since $ (\uC_1(X))_{\rm ab} = \uC_0(X) $, we have
$$
|\L(M)_{\rm ab}| \simeq |B_*(\uC_1, \uC_1, M)_{\rm ab}| \simeq |B_*(\uC_0, \uC_1, M)| \simeq \Omega\,{\rm SP}^{\infty}(BM)\,,
$$
where the last equivalence is a result of Lemma~\ref{maybar} below (see \eqref{omspbar}).
\end{proof}
The relation between monoids and groups is determined by the canonical 
(group completion) functor $l: \fM \to  \ffgr $. This last functor extends naturally to a functor 
$\tilde{l}: \fM_{\N} \to \ffgr_{\Z} $, and the maps $ \Psi_{\rm sym}: \Delta S \to \ffgr^{\rm op} $ and $ \tilde{\Psi}_{\rm sym}:\ \Delta S \to \ffgr_{\Z}^{\rm op} $ defined by \eqref{s4.2e5} and \eqref{s4.3e7} factor through $l$ and $ \tilde{l}$ respectively, giving the commutative diagram
\begin{equation}
\la{diagmon}
\begin{diagram}[small]
 & & & & \fM_{\N}^{\rm op} & \rTo^{\tilde{l}} & \ffgr^{\rm op}_{\Z} \\
 & & & \ruTo^{\tilde{\Psi}_{\rm sym}} & \dTo^{p} & & \dTo_{p}\\
\Delta C^{\rm op} & \rTo^{\iota}& \Delta S &  \rTo^{\,\Psi_{\rm sym}\ } & \fM^{\rm op} & \rTo^{l} & \ffgr^{\rm op}
\end{diagram}
\end{equation}

As a consequence of Proposition~\ref{propmon}, we get
\begin{corollary}
\la{cor33}
For any homotopy simplicial group $ \Gamma \in \sGr^h $, there is a weak equivalence
$$
|{\rm hocolim}_{\fM_{\N}^{\rm op}} (p^{\ast}l^{\ast} \Gamma)|\,\simeq\,
\Omega\,{\rm SP}^{\infty}(B\Gamma)
$$
\end{corollary}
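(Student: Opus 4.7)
The plan is to apply Proposition~\ref{propmon} (which handles strict simplicial monoids) to the underlying simplicial monoid of $\Gamma$ after rigidification, and then match the two notions of classifying space. The main technical ingredients are Badzioch's rigidification, homotopy invariance of homotopy colimits, and the classical identification of the bar-construction classifying space of a simplicial group with Kan's $\bar{W}$.

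First, I would reduce to the case where $\Gamma$ is a strict simplicial group. By Badzioch's Theorem~\ref{Bad}, the derived unit $\Gamma \to \Gamma' := J(\L K(\Gamma))$ is an $S$-local weak equivalence in $\sGr^h$. Since both $\Gamma$ and $\Gamma'$ are weakly product-preserving, \cite[Proposition 5.6]{Ba02} (as used in the proof of Lemma~\ref{RHBadM}) upgrades this to an objectwise weak equivalence of diagrams $\ffgr^{\rm op} \to \sSet$. Restricting along $l^{\rm op}$ and pulling back along $p^{\rm op}$ preserves objectwise equivalences, and homotopy colimits are homotopy invariant, so
$$
|{\rm hocolim}_{\fM_{\N}^{\rm op}}\,(p^{\ast}l^{\ast}\Gamma)| \,\simeq\, |{\rm hocolim}_{\fM_{\N}^{\rm op}}\,(p^{\ast}l^{\ast}\Gamma')|\ .
$$
Moreover, $B\Gamma \simeq B\Gamma'$ by Definition~\ref{CSpaceHSG}, so we may work with $\Gamma'$ for the rest of the argument.

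Next, I would observe that since $\Gamma'$ is a strict simplicial group, its restriction $l^{\ast}\Gamma': \fM^{\rm op} \to \sSet$ is a genuine (strict) simplicial monoid: indeed, $l: \fM \to \ffgr$ sends coproducts to coproducts, so $l^{\ast}\Gamma'$ is product-preserving. Applying Proposition~\ref{propmon} with $M = l^{\ast}\Gamma'$ then gives
$$
|{\rm hocolim}_{\fM_{\N}^{\rm op}}\,(p^{\ast}l^{\ast}\Gamma')| \,\simeq\, \Omega\,{\rm SP}^{\infty}(B(l^{\ast}\Gamma'))\ .
$$

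The final step is the identification $B(l^{\ast}\Gamma') \simeq B\Gamma'$. I would invoke the classical fact that, for a simplicial group $\Gamma'$, the classifying space of its underlying simplicial monoid --- defined via the two-sided bar construction $|B_\ast(\mathrm{pt}, \Gamma', \mathrm{pt})|$, equivalently as the realization of the nerve of $\Gamma'$ viewed as a one-object simplicial category --- is naturally weakly equivalent to Kan's classifying complex $|\bar{W}\Gamma'|$, since a simplicial group requires no group completion. Chaining the equivalences yields the desired weak equivalence. I expect the comparison $B(l^{\ast}\Gamma') \simeq |\bar{W}\Gamma'|$ to be the only delicate ingredient, but this is a well-known fact that can be extracted from standard references on classifying spaces of simplicial monoids; no genuinely new technical work is required beyond assembling this identification with Proposition~\ref{propmon} and the rigidification reduction.
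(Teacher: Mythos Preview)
Your proposal is correct and follows essentially the same approach as the paper's one-line proof, which simply says to ``apply Proposition~\ref{propmon} to the simplicial group $\L K(\Gamma)$ viewed as a simplicial monoid.'' You have spelled out in detail the steps the paper leaves implicit: the reduction to the strict case via rigidification and homotopy invariance of hocolims, and the identification of the monoid classifying space $B(l^\ast\Gamma')$ with $|\bar{W}\Gamma'|$.
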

\begin{proof}
Apply Proposition~\ref{propmon} to the simplicial group $ \L K(\Gamma) $ viewed as a simplicial monoid.
\end{proof}
\begin{remark} \la{sgrsmon}
Corollary~\ref{cor33} can be also deduced from Theorem~\ref{omspinf} if we notice that
the natural map
$$ 
{\rm hocolim}_{\fM_{\N}^{\rm op}} (p^{\ast}l^{\ast} \Gamma) \,\xrightarrow{\sim}\, {\rm hocolim}_{\ffgr_{\Z}^{\rm op}} (p^{\ast}\Gamma) 
$$ 
is a weak equivalence for any $ \Gamma $. This last fact follows from Theorem~\ref{Cofinality}, the
assumptions of which hold thanks to the known properties of the group completion functor ({\it cf.} \cite[Lemma 3.2]{BRYI}).
\end{remark}

\section{Topological character maps via Goodwillie calculus and  operads}
\la{S55}
In this section, we will describe the maps $\CS$ and $\SR $ explicitly in topological terms, using
Goodwillie calculus and classical operads. The latter approach is based on ideas of Fiedorowicz \cite{F} that were developed by Ault in \cite{Au1}. The former is inspired by results of Biedermann and Dwyer that appeared in \cite{BD}. The interpretation in terms of Goodwillie derivatives leads to a natural nonlinear (polynomial) generalization of topological character maps that deserves a further study (see Section~\ref{S5.4}). 

\subsection{Goodwillie homotopy calculus}
Goodwillie calculus provides a universal approximation (``Taylor decomposition'') of basic
homotopy functors that arise in topology in terms of polynomial functors. This method --- introduced by T. Goodwillie in the series of papers \cite{CalcI, CalcII, CalcIII} --- has been studied extensively in recent years and has found many interesting applications (see, e.g., the survey papers \cite{ArCh} and \cite{Kuhn}).

Recall that by a homotopy functor we mean a functor on topological spaces that preserves
weak homotopy equivalences. A homotopy functor $ F: \Top_* \to \Top_* $ 
is called {\it $n$-excisive} (or {\it polynomial of degree} $ \le n $) if it takes any strongly coCartesian $(n+1)$-dimensional cubical diagram in $ \Top_* $
to a Cartesian diagram ({\it cf.} \cite[Definition 1.1.2]{ArCh}). 
For $n=0$, this simply means that $F$ is homotopically constant: i.e. $ F(X) \simeq F(\ast)$ for any $X \in {\rm Top}_\ast$. For $n=1$, this is the usual Mayer-Vietoris
property: a functor $F$ is $1$-excisive if and only if it
maps homotopy pushout squares to homotopy pullback squares in ${\rm Top}_\ast$
(see \cite[Example 1.1.4]{ArCh}). For $ n > 1 $, $F$ enjoys a higher dimensional version of the Mayer-Vietoris property that reduces to the usual one inductively in $n$.

The main construction of Goodwillie calculus can be described as follows ({\it cf.} \cite[Theorem 1.8]{CalcIII}).
\begin{theorem}[Goodwillie] \la{TaylorS}
For any homotopy functor $\,F:{\rm Top}_\ast \to {\rm Top}_{\ast}\,$ on pointed spaces, there exists a natural tower of functors $($fibrations$)$ under $F$:
\begin{equation} \la{s4.4e1} 
\begin{diagram}[small, nonhug]
 & & \vdots\\
 &  & \dTo_{p_3}\\
 & & P_2F(X)\\
 & \ruTo(2,4)^{\ \delta_2}\quad &\dTo_{p_2}\\
 &  & P_1F(X)\\
&\ \ruTo^{\ \delta_1}& \dTo_{p_1}\\
F(X)& \rTo^{\ \delta_0} & P_0F(X)\\
\end{diagram}
\end{equation}
satisfying the following properties: for all $ n\ge 0 $,
\begin{enumerate}
\item[$(1)$] $\,P_nF:\, {\rm Top}_\ast \to {\rm Top}_{\ast} $ is an $n$-excisive functor, 
\item[$(2)$] 
$\delta_n:\,F \to P_nF$ is the universal weak natural transformation to an $n$-excisive functor.
\end{enumerate}
\end{theorem}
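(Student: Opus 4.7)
The plan is to construct $P_nF$ explicitly as an iterated excisive approximation, following the classical Goodwillie procedure. The key ingredient is a canonical "resolution" of a pointed space $X$ by a strongly coCartesian $(n+1)$-cube. For each $X \in \Top_\ast$, let $\mathcal{X}: \mathcal{P}(\{0,1,\ldots,n\}) \to \Top_\ast$ be the cube defined by $\mathcal{X}(S) := X \ast S$ (the join of $X$ with the discrete set $S$, regarded as pointed by the cone point), so that $\mathcal{X}(\emptyset) = X$ and each arrow $\mathcal{X}(S) \to \mathcal{X}(S \cup \{i\})$ is a cofibration. One checks directly that $\mathcal{X}$ is strongly coCartesian. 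Applying $F$ to the punctured sub-cube (with $\emptyset$ removed) gives a natural map
\begin{equation*}
\delta_F:\ F(X) = F(\mathcal{X}(\emptyset)) \,\longrightarrow\, T_nF(X) := \underset{S \neq \emptyset}{\mathrm{holim}}\, F(\mathcal{X}(S)),
\end{equation*}
which measures the failure of $F$ to be $n$-excisive (in the sense that $\delta_F$ is a weak equivalence on all $X$ iff $F$ is $n$-excisive). Set $P_nF(X) := \hocolim\!\bigl[ F(X) \to T_nF(X) \to T_n^2 F(X) \to \cdots\bigr]$, with the obvious structure maps $\delta_n: F \to P_nF$ and $p_n: P_nF \to P_{n-1}F$ induced by the inclusion $\mathcal{P}(\{0,\ldots,n-1\}) \hookrightarrow \mathcal{P}(\{0,\ldots,n\})$.

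The two main properties to verify are $(i)$ that $P_nF$ is $n$-excisive, and $(ii)$ universality. For $(i)$, the central estimate is a connectivity lemma: if $F$ takes $k$-connected maps to $c(k)$-connected maps for some function $c$, then $\delta_F$ is roughly $(k(n+1) - $ const$)$-connected on $k$-connected spaces. Iterating shows that the sequence $F(X) \to T_nF(X) \to T_n^2F(X) \to \cdots$ has connectivity tending to infinity in each simplicial degree of any given cube, so the hocolim commutes with the finite holim defining $T_n$, i.e. the canonical map $P_nF(X) \to T_nP_nF(X)$ is a weak equivalence; this is equivalent to $n$-excisiveness. For $(ii)$, one notes that if $G$ is already $n$-excisive then $\delta_G: G \to T_nG$ is a weak equivalence, hence so is $G \to P_nG$; consequently any natural transformation $F \to G$ factors uniquely (in the homotopy category of functors) through $P_nF \to P_nG \simeq G$.

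The tower property $p_n \circ \delta_n \simeq \delta_{n-1}$ follows from naturality of the construction in the cube variable: restricting a coCartesian $(n+1)$-cube to its $n$-subcube realises $T_{n-1}F$ as a retract of the holim defining $T_nF$, giving compatible maps $P_nF \to P_{n-1}F$ under $F$.

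The hardest step is the connectivity estimate underlying $(i)$, since one must control how the holim over punctured cubes interacts with the sequential hocolim. This is essentially the content of Goodwillie's \cite{CalcII} (the "higher Blakers-Massey" theorem for cubes), which provides the required range in which iteration stabilizes. Given that input, the remaining verifications are formal manipulations of (co)limits and standard homotopy invariance arguments.
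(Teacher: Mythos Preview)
The paper does not prove this theorem; it is quoted as background and attributed to Goodwillie with a reference to \cite[Theorem~1.8]{CalcIII}. So there is no ``paper's own proof'' to compare against, only Goodwillie's original argument.

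Your construction of $T_nF$ via the join cube and of $P_nF$ as the sequential homotopy colimit of the iterates $T_n^kF$ is exactly Goodwillie's, and your argument for universality (part (2)) is correct and standard. The issue is in your argument for part (1), $n$-excisiveness of $P_nF$. You reduce it to a connectivity estimate that assumes $F$ takes $k$-connected maps to $c(k)$-connected maps for some function $c$. That is an extra hypothesis not present in the theorem as stated, which concerns an \emph{arbitrary} homotopy functor. The connectivity approach you outline is in the spirit of \cite{CalcI, CalcII}, where Goodwillie worked under such ``stable excision'' hypotheses; but the general statement here is the one from \cite{CalcIII}, and its proof is different and more formal. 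The key lemma there (Lemma~1.9 of \cite{CalcIII}) shows, without any connectivity assumption, that for every strongly coCartesian $(n{+}1)$-cube $\mathcal{Y}$ the comparison map $F(\mathcal{Y}(\emptyset)) \to \mathrm{holim}_{S\neq\emptyset} F(\mathcal{Y}(S))$ factors through $t_nF:=\delta_F$ in a way compatible with iteration; passing to the colimit then forces $P_nF(\mathcal{Y})$ to be Cartesian. No Blakers--Massey input is needed. As written, your sketch proves the theorem only for functors satisfying a connectivity condition, so either add that hypothesis explicitly or replace the connectivity step by the formal factorization argument from \cite{CalcIII}.
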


The last property needs an explanation. By a {\it weak}\, natural transformation $\, \delta: F \to P $ one means a pair (`zig-zag') of natural transformations
$\,F \xrightarrow{\delta'} G \xleftarrow{\delta''} P\,$, where $ \delta'' $ is a natural
weak equivalence, i.e. $\, \delta_X'': G(X) \xleftarrow{\sim} P(X) \,$ is a weak homotopy equivalence for all spaces $ X \in \Top_* $. Note that if $F$ and $P$ are homotopy functors, a weak natural transformation $\, \delta: F \to P $ induces a well-defined natural tranformation between the corresponding functors on the homotopy category $ \Ho(\Top_*)$.
Property (2) of Theorem~\ref{TaylorS} then says that the weak natural transformation $\, \delta_n:\,F \to P_nF$ is homotopically initial among all natural transformations from $F$ to $n$-excisive functors.
 
Given a homotopy functor  $F: \Top_* \to \Top_* $, we define its {\it $n$-th  layer} to be the homotopy fibre 
\begin{equation} \la{s4.4e2} D_nF(X)\,:=\,{\rm hofib}\{P_nF(X) \xrightarrow{p_n} P_{n-1}F(X) \} \end{equation}
where $p_n$ is the canonical projection at the $n$-th stage of the Goodwillie tower \eqref{s4.4e1}. A remarkable fact discovered in \cite{CalcIII} (see \cite[Example 1.2.4]{ArCh}) is that, all layers of a homotopy functor $F$ are naturally infinite loop spaces. More precisely, for each $n \geqslant 0$, there is a spectrum $ \partial_n F $ equipped with a (na\"ive) action of the symmetric group $ S_n $ such that
\begin{equation}\la{GD1} 
D_nF(X) \,\simeq \,\Omega^{\infty}(\partial_n F \wedge (\Sigma^\infty X)^{\wedge n})_{hS_n}\,,
\end{equation}
where $\,(\Sigma^{\infty},\, \Omega^{\infty}) \,$ are the suspension spectrum and the infinite loop space functors, respectively. The spectrum $ \partial_n F $ is called the {\it $n$-th Goodwillie derivative of $F$} (at the basepoint $ \ast$).

\subsection{The map $\CS$}
\la{S5.2}
Recall that, by Corollary \ref{topch}, the derived character map $\Tr(\Gamma)_\ast $ is induced by the composition of natural maps in $ \Ho(\Top_*)$:
\begin{equation} \la{s4.4e3} ES^1 \times_{S^1} \cL(X) \,\xrightarrow{\CS_{X}}\,
\Omega \Omega^{\infty} \Sigma^{\infty} (X)\, \xrightarrow{\SR_{X}}\, 
\Omega {\rm SP}^{\infty}(X) \end{equation}
where $X=B\Gamma$. Since the classifying space functor on homotopy simplicial groups  induces an equivalence $\,\Ho(\sGr^h) \cong \Ho({\rm Top}_{0,\ast})\,$, the maps \eqref{s4.4e3} are defined on (the homotopy types of) all pointed connected spaces. To analyze these maps we introduce the notation:
$$\Theta(X) \,:= \, ES^1 \times_{S^1} \cL(X) \,=\, ES^1 \times_{S^1} {\rm Map}(S^1,X)$$
and define $\bar{\Theta}:\, {\rm Top}_\ast \to {\rm Top}_\ast$ by
\begin{equation} \la{s4.4e4} \bar{\Theta}(X)\,:=\, \Theta(X)/\Theta(\ast) \,\cong\, ES^1 \times_{S^1} \cL(X)/BS^1 \,\cong\, ES^1_{+} \wedge_{S^1} \cL(X) 
\end{equation}
Note that \eqref{s4.4e4} is a {\it reduced}\, homotopy functor, so that
$\, P_0\bar{\Theta}(X) \simeq \bar{\Theta}(\ast) = \{\ast\} \,$ and $\,P_1\bar{\Theta}(X) \cong D_1\bar{\Theta}(X) $ for any space $X \in \Top_*$ (see \eqref{s4.4e2}).

The next proposition shows that the natural transformation $\CS$  in \eqref{s4.4e3}, relating cyclic to symmetric homology, essentially coincides with the first Goodwillie layer of the functor \eqref{s4.4e4}. We deduce this from results of Carlsson and Cohen \cite{CC} by elaborating on a remark of Fiedorowicz \cite{F}.
\begin{proposition} 
\la{csx}
The map $\CS$ in \eqref{s4.4e3} is represented by  
 \begin{equation*}
 \begin{diagram}[small]
ES^1 \times_{S^1} \cL(X)& &\rTo^{\CS_{X}} & & \Omega\, \Omega^{\infty} \Sigma^{\infty} (X) \\
\Big| \Big| & &  &  &\dTo_{\wr}\\
\Theta(X) & \rOnto^{{\rm can}\qquad} & \bar{\Theta}(X) & \rTo^{\quad\delta_{1,X}} & D_1\bar{\Theta}(X)
 \end{diagram}
 \end{equation*}
where the right vertical arrow is a natural weak equivalence and
$\delta_1$ is the 1-st layer of the functor \eqref{s4.4e4}.
\end{proposition}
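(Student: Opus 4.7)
My plan has three stages. First, I would verify that the Carlsson--Cohen map $\CS_X$ descends to the reduced functor $\bar\Theta(X)$. The inclusion $\Theta(\ast) = BS^1 \hookrightarrow \Theta(X)$ given by constant loops sits inside the Carlsson--Cohen construction, and, since the target $\Omega\,\Omega^\infty\Sigma^\infty(\ast)$ is contractible, naturality of $\CS$ in $X$ together with the explicit formula of \cite{CC} forces the restriction to $\Theta(\ast)$ to be null-homotopic. This produces a canonical reduced map $\bar\CS_X:\,\bar\Theta(X) \to \Omega\,\Omega^\infty\Sigma^\infty X$ compatible with the quotient $\Theta(X) \twoheadrightarrow \bar\Theta(X)$.

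Second, I would invoke the universality part of Theorem~\ref{TaylorS}. The target $X \mapsto \Omega\,\Omega^\infty\Sigma^\infty X$ is a reduced \emph{linear} functor on $\Top_\ast$, hence in particular $1$-excisive. The universal property of the natural transformation $\delta_1:\bar\Theta \to P_1\bar\Theta$ therefore produces a unique weak natural factorization
$$
\bar\CS_X = \phi_X \circ \delta_{1,X}, \qquad \phi_X:\, P_1\bar\Theta(X) \,\to\, \Omega\,\Omega^\infty\Sigma^\infty X.
$$
Because $\bar\Theta$ is reduced, $P_0\bar\Theta \simeq \ast$, and so $P_1\bar\Theta \simeq D_1\bar\Theta$. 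This already reproduces the factorization claimed in the proposition, modulo showing that $\phi_X$ is a weak equivalence.

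Third, I would prove that $\phi_X$ is a weak equivalence by identifying the first Goodwillie derivative $\partial_1\bar\Theta$ with the sphere spectrum $S^{-1}$. By the formula \eqref{GD1}, this would give
$$
D_1\bar\Theta(X) \,\simeq\, \Omega^\infty(S^{-1}\wedge \Sigma^\infty X) \,\simeq\, \Omega\,\Omega^\infty\Sigma^\infty X,
$$
which matches the target on the nose. To compute $\partial_1\bar\Theta \simeq \mathrm{hocolim}_n\, \Omega^n\bar\Theta(S^n)$, one uses the evaluation fibration $\Omega X \to \cL X \to X$ and the suspension theorem: on highly-connected spheres $X = S^n$ the loop space $\cL(S^n)$ splits stably into $S^n$ and the suspension-by-one of its free-loop complement, and the $S^1$-action on the Borel construction absorbs precisely one loop coordinate, producing the $-1$ degree shift. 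The naturality of $\phi_X$ combined with this computation on the generating family $\{S^n\}_{n \ge 1}$ implies $\phi_X$ is an equivalence for every connected CW complex $X$ by standard cell-attachment arguments.

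The main obstacle is the third step, specifically matching the Goodwillie-theoretic derivative of the Borel construction with $S^{-1}$ \emph{in such a way that the universal map $\delta_1$ is identified with the Carlsson--Cohen map} and not merely with some linearization of it. The cleanest route is to quote the relevant computation of Biedermann--Dwyer \cite{BD}, who analyze the Goodwillie tower of functors of Borel type on free loop spaces; their result, combined with the explicit construction of \cite{CC} (as observed already by Fiedorowicz \cite{F}), ensures that $\phi_X$ is precisely the identification one wants. Alternatively, the equivalence can be established directly by verifying it on suspensions $X = \Sigma Y$, where $\cL\Sigma Y$ admits a James-style decomposition compatible with the $S^1$-action, and then extending to arbitrary connected $X$ using the convergence of the Goodwillie tower on $1$-connected spaces.
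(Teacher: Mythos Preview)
Your overall strategy is different from the paper's, and the third step---which you correctly flag as the main obstacle---is not resolved by the arguments you give.

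The paper does not factor $\bar\CS_X$ abstractly through $\delta_1$ via the universal property and then separately argue that the resulting comparison map $\phi_X$ is an equivalence. Instead it unpacks the Carlsson--Cohen construction itself: the map $f_X$ of \cite{CC} is \emph{built} as the Waldhausen stabilization map $s_\infty:\bar\Theta(X)\to\Omega^\infty\bar\Theta\,\Sigma^\infty(X)$ followed by (the inverse of) an explicit weak equivalence $\Omega i_\infty:\Omega\,\Omega^\infty\Sigma^\infty X\xrightarrow{\sim}\Omega\,\Omega^\infty\bar\Theta\,\Sigma^\infty(\Sigma X)$, the latter being precisely \cite[Lemma~4.1]{CC}. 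Since it is a standard fact of Goodwillie calculus that for any reduced homotopy functor $F$ the stabilization map $s_\infty$ \emph{is} the universal map $\delta_1:F\to P_1F\simeq D_1F$, the identification claimed in the proposition is immediate. No computation of $\partial_1\bar\Theta$ is needed; the equivalence with $\Omega\,\Omega^\infty\Sigma^\infty X$ is handed to you by Carlsson--Cohen's lemma.

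In your approach, by contrast, computing $\partial_1\bar\Theta\simeq S^{-1}$ only shows that the source and target of $\phi_X$ are \emph{abstractly} equivalent; it does not show that $\phi_X$ itself is an equivalence. A natural transformation between two reduced $1$-excisive functors both with derivative $S^{-1}$ corresponds to a self-map of $S^{-1}$, i.e.\ an integer, and nothing in your outline pins down that integer as $\pm 1$. Your proposed remedies do not close this gap: Biedermann--Dwyer \cite{BD} analyze the Goodwillie tower of the \emph{identity} functor versus the lower central series of the Kan loop group, not Borel constructions on free loop spaces, so that citation does not apply here. The suspension check could in principle work, but carrying it out would require verifying that $\bar\CS_{S^n}$ induces an equivalence after stabilization---which is exactly the content of \cite[Lemma~4.1]{CC} that the paper invokes directly.
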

\begin{proof} As noticed in \cite[Remark 1.4]{F}),  the map $\CS_X$ factors in the homotopy category as
\begin{equation} \la{s4.4e5} 
ES^1 \times_{S^1} \cL(X) \xrightarrow{{\rm can}}  ES^1_{+} \wedge_{S^1} \cL(X) \xrightarrow{f_X} \Omega \Omega^{\infty} \Sigma^{\infty}(X) 
\end{equation}
where $ f_X $ is a certain natural map constructed in \cite{CC}. We review the construction of $ f_X $ and compare it to a well-known general formula for the first Goodwillie layer of a reduced homotopy functor.

First, we recall a standard stabilization construction 
due to Waldhausen \cite{Wald}. For a pointed space $ X $, denote by $ CX = X \wedge I $ and $ \Sigma X = X \wedge S^1 $ the reduced cone
and the reduced suspension of $X$, respectively. The latter can be obtained by glueing two copies of the former along a common base which is identified with $X$: this yields the natural pushout square in $ \Top_*$
 \begin{equation}
 \la{squa}
 \begin{diagram}[small]
 X  & \rTo & CX \\
\dTo &     & \dTo_{j_0}               \\
CX  &  \rTo_{j_1}    & \Sigma X
 \end{diagram}
 \end{equation}
Applying the given functor $F$ to \eqref{squa}
and taking the homotopy pullback along the maps $ j_0 $
and $j_1$ induces a natural map
\begin{equation}
\la{Whoc}
F(X) \to \hocolim\,[\,F(CX) \xrightarrow{} 
F(\Sigma X) \xleftarrow{} F(CX)\,]
\end{equation}
Since the functor $F$ is homotopic and reduced, we have $\,F(CX) \simeq F(\ast) \simeq \{\ast\}\,$, which implies that the homotopy colimit in  \eqref{Whoc} is equivalent to $\Omega F(\Sigma X)$. In this way, we get a natural map $ s: F(X) \to \Omega F(\Sigma X) $. This last map can be iterated any number of times: 
\begin{equation}
\la{snmap}
s_{n}:\ F(X)\,\to\, \Omega^n F(\Sigma^n X)  \ , \quad n \ge 0\ ,
\end{equation}
and eventually stabilized, defining the map
\begin{equation}
\la{stabmap}
s_{\infty}:\ F(X)\,\to\, \varinjlim_{n}\, \Omega^n F(\Sigma^n X)\, =\, \Omega^\infty F \Sigma^{\infty}(X)   
\end{equation}
In particular,  \eqref{stabmap} exists for our functor $ F = \bar\Theta $, see \eqref{s4.4e4}.

Next, for each $n \geqslant 0$, define $\,\Sigma^n X \to \bar{\Theta}\Sigma^n(\Sigma X)\,$ to be the composition of the following natural maps
$$\Sigma^n X \stackrel{\varepsilon}{\to} \Omega \Sigma(\Sigma^n X)= \Omega(\Sigma^{n+1}X) \hookrightarrow \cL(\Sigma^{n+1} X) \simeq ES^1 \times \cL(\Sigma^{n+1} X) \twoheadrightarrow ES^1_+ \wedge_{S^1} \cL(\Sigma^{n+1} X) = \bar{\Theta}\Sigma^n(\Sigma X)\,, 
$$
where $ \varepsilon: \id \to \Omega \Sigma $ is the adjunction unit of $(\Sigma, \Omega) $.
Looping $n$ times then yields an inductive system of maps
\begin{equation} \la{s4.4e6} i_n :\,
\Omega^n \Sigma^n X \to \Omega^n \bar{\Theta} \Sigma^n (\Sigma X) \ , 
\quad \forall\,n \ge 0\ ,
\end{equation}
which, by \cite[Lemma 4.1]{CC}, induce in the limit a homotopy equivalence 
\begin{equation} 
\la{s4.4e7} 
i_{\infty}:\ \Omega^{\infty}\Sigma^{\infty} X \,\xrightarrow{\sim}\, \Omega^\infty \bar{\Theta} \Sigma^\infty (\Sigma X) 
\end{equation}
Finally, we note the following canonical identifications
\begin{eqnarray}\la{ident}
\lefteqn{
\Omega^\infty \bar{\Theta} \Sigma^\infty (X)\, := \,\varinjlim_{n}\, \Omega^n \bar{\Theta} \Sigma^n (X) \,=\,  \varinjlim_{n}\, \Omega^{n+1} \bar{\Theta} \Sigma^{n+1} (X)\,=}  \nonumber\\
& &  =\, \varinjlim_{n}\, \Omega [\Omega^n \bar{\Theta} \Sigma^n ( \Sigma X)]
\cong  \Omega \varinjlim_{n}\, [\Omega^n \bar{\Theta} \Sigma^n ( \Sigma X)] 
= \Omega\,\Omega^{\infty}\bar{\Theta}\Sigma^{\infty}(\Sigma X)
\end{eqnarray}
The Carlsson-Cohen map $ f_X  $ that appears in \eqref{s4.4e5} can now be represented by the zig-zag of natural transformations
\begin{equation} 
\la{s4.4e8} 
\bar{\Theta}(X) \xrightarrow{s_\infty} 
\Omega^{\infty}\bar{\Theta} \Sigma^{\infty}(X) \stackrel{\eqref{ident}}{\cong} \Omega\,\Omega^{\infty}\bar{\Theta}\Sigma^{\infty}(\Sigma X)
\xleftarrow{\Omega i_{\infty}} \Omega\, \Omega^{\infty}\Sigma^{\infty}(X)\,,
\end{equation}
where the leftmost arrow is the Waldhausen stabilization map \eqref{stabmap} for  $\bar{\Theta}$ and the rightmost arrow is a natural weak equivalence induced by \eqref{s4.4e7}. To complete the proof it remains to note that $\, P_1(F) \simeq \Omega^\infty F \Sigma^\infty $ for any reduced homotopy functor $F$, and
the universal natural transformation $ \delta_{1}: F \to P_1 F = D_1 F $ coincides (up to homotopy) with the stabilization map \eqref{stabmap} (see, e.g. \cite[Example 5.3]{Kuhn}).
\end{proof}
\subsection{The map $ \SR $} 
\la{S5.3}
We now turn to the second map $ \SR_X $ in \eqref{s4.4e3} that relates symmetric homology to representation homology. In this section, we construct this map topologically by a method similar to that of Proposition~\ref{csx}; its relation to Goodwillie calculus will be discussed
in Section~\ref{S5.4}. Our starting point is the well-known fact that the Dold-Thom functor $ \SP^\infty:\, \Top_* \to \Top_* $ factors through the category of abelian topological monoids: in fact,
$ \SP^\infty(X)$ is the free abelian topological monoid generated by the space $X$ (see, e.g., \cite{McC69}). This implies that $ \SP^{\infty}$ is a linear (i.e., $1$-excisive) functor. The latter can be seen directly as follows. Consider the natural maps \eqref{snmap} for the functor $ F = \SP^\infty $ constructed in the proof of Proposition~\ref{csx}:
\begin{equation}
\la{s_n}
s_n:\ \SP^\infty(X)\,\to\,\Omega^n \SP^\infty \Sigma^n(X)\ ,\quad n \ge 0\ ,
\end{equation}
The maps \eqref{s_n} are all weak equivalences, which follows immediately from the commutative diagrams
 \begin{equation*}
 \begin{diagram}[small]
 \pi_i\,\SP^\infty(X)  & \rTo^{\ \pi_i(s_n)\ } & \pi_i\,\Omega^n \SP^\infty \Sigma^n(X) \\
 \dTo^{\wr} &     & \dTo_{\wr}               \\
\tilde{H}_i(X)  &  \rTo^{\sim}    
& \tilde{H}_{i+n}(\Sigma^n X)
 \end{diagram}
 \end{equation*}
where the vertical arrows are isomorphisms by the Dold-Thom Theorem. Thus, in the limit, we get 
\begin{equation}
\la{s_inf}
s_\infty:\ \SP^\infty(X)\,\sto\,\Omega^\infty \,\SP^\infty\, \Sigma^\infty(X)\ ,
\end{equation}
showing that $\,\SP^\infty \simeq P_1(\SP^\infty) \simeq D_1(\SP^\infty)\,$, whence the linearity of $\SP^\infty$.

On the other hand, for all $n\ge 0$, we have canonical maps $\, \Sigma^n X \to \SP^\infty(\Sigma^n X)\,$
inducing the Hurewicz homomorphisms, see \eqref{s4e8}). Applying loop functors to these maps 
yield an inductive system of maps
\begin{equation}
\la{i_n}
i_n:\ \Omega^n \Sigma^n(X)\,\to\,\Omega^n \SP^\infty \Sigma^n(X)\ ,\quad n \ge 0\ ,
\end{equation}
which in the limit, induces
\begin{equation}
\la{i_inf}
i_\infty:\ \Omega^\infty \Sigma^\infty (X)\,\to\,\Omega^\infty \SP^\infty \Sigma^\infty(X)
\end{equation}
Unlike the analogous map \eqref{s4.4e7} for the functor $ \bar\Theta$,
\eqref{i_inf} is not a weak equivalence in general. Nevertheless, looping it once
and combining with \eqref{s_inf}, we get the pair of natural transformations
\begin{equation}
\la{omis}
\Omega \Omega^\infty \Sigma^\infty(X) \,\xrightarrow{\Omega i_\infty}\,
\Omega \Omega^\infty \SP^\infty \Sigma^\infty(X) \,\xleftarrow{\Omega s_\infty}\,
\Omega \SP^\infty(X)
\end{equation}
where the rightmost one is a natural weak equivalence. Our goal is to prove
the following analogue of Proposition~\ref{csx}.
\begin{proposition}
\la{srx}
The map $ \SR $ is represented by the weak natural transformation \eqref{omis}. Thus, in the homotopy category, $ \SR_X$ is equivalent to the map
\begin{equation}
\la{omis1}
(\Omega s_{\infty})^{-1}\,(\Omega i_\infty): \ \Omega \Omega^\infty \Sigma^\infty(X)\ \to \ \Omega \SP^\infty(X)
\end{equation}
which is the $($looped once$)$ canonical natural transformation relating stable homotopy to $($reduced$)$ singular
homology of pointed spaces.
\end{proposition}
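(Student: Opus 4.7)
The proof will parallel the approach of Proposition~\ref{csx}, but adapted to exploit a crucial additional feature of $\SP^\infty$: it is already a linear ($1$-excisive) functor, so its Waldhausen stabilization map $s_\infty$ in \eqref{s_inf} is a weak equivalence. This confirms that \eqref{omis} is a legitimate weak natural transformation in the homotopy category and makes \eqref{omis1} unambiguous.

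The plan is to use the commutative diagram \eqref{diagmon} to factor the map induced by $\tilde{\Psi}_{\rm sym}$ at the level of topological realizations through the intermediate category $\fM_\N^{\rm op}$. Concretely, the map $\tilde{\Psi}_{\rm sym}^*$ from $|\hocolim_{\Delta S}(B_{\rm sym}\Gamma)| \simeq \Omega\Omega^\infty\Sigma^\infty(B\Gamma)$ to $|\hocolim_{\ffgr_\Z^{\rm op}}(p^*\Gamma)| \simeq \Omega\SP^\infty(B\Gamma)$ factors as
\[
|\hocolim_{\Delta S}(B_{\rm sym}\Gamma)| \,\to\, |\hocolim_{\fM_\N^{\rm op}}(p^*l^*\Gamma)| \,\xrightarrow{\tilde{l}^*}\, |\hocolim_{\ffgr_\Z^{\rm op}}(p^*\Gamma)|,
\]
where the second arrow is a weak equivalence by Remark~\ref{sgrsmon}. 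The first space is identified with $\Omega\Omega^\infty\Sigma^\infty(B\Gamma)$ via Fiedorowicz's theorem using the little $1$-cube operad model (as in \cite{F, Au1}), while the middle space is identified with $\Omega\SP^\infty(B\Gamma)$ via the operadic bar construction $B_*(\uC_1, \uC_1, M)$ used in the proof of Proposition~\ref{propmon}. Both identifications pass through the same monad $\uC_1$ acting on the simplicial monoid $M = l^*\Gamma$, and the natural abelianization of monads $\uC_1 \to \uC_0$ (inducing $(\uC_1(X))_{\rm ab} = \uC_0(X)$) implements the resulting map inside the bar construction.

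To finish, we identify the resulting map (induced by abelianization on the operadic side) with \eqref{omis1}. The key observation is that, under these identifications, the stabilization map $\Omega s_\infty$ in \eqref{omis} corresponds exactly to the Goodwillie/linearization equivalence $\SP^\infty \sto \Omega^\infty \SP^\infty \Sigma^\infty$ arising from the linearity of $\SP^\infty$, while the map $\Omega i_\infty$ in \eqref{omis} corresponds to the natural transformation $\Omega^\infty \Sigma^\infty \to \Omega^\infty \SP^\infty \Sigma^\infty$ assembled from the Hurewicz maps $\Sigma^n X \to \SP^\infty(\Sigma^n X)$. By the universal property of $\SP^\infty$ as the free abelian topological monoid functor, there is (up to natural homotopy) a unique pointed natural transformation $\Omega^\infty\Sigma^\infty \to \SP^\infty$ inducing the stable Hurewicz map on homotopy groups; since both the composite $(\Omega s_\infty)^{-1}(\Omega i_\infty)$ and the map $\SR$ produced from the abelianization in the diagram \eqref{diagmon} satisfy this property, they must agree.

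The main obstacle will be Step~2: tracking the topological realization of the simplicial bar construction $B_*(\uC_1, \uC_1, l^*\Gamma)$ through both the Fiedorowicz equivalence $|\hocolim_{\Delta S} B_{\rm sym}\Gamma| \simeq \Omega\Omega^\infty\Sigma^\infty(B\Gamma)$ and the model used in Proposition~\ref{propmon}, and checking that the map induced by the commutative square of monads $\uC_1 \to \uC_0$ coincides, up to coherent homotopy, with the map induced by $\tilde{l}: \fM_\N \to \ffgr_\Z$. This will require some diagrammatic care, but no essentially new input beyond results of May, Fiedorowicz, and Ault already cited.
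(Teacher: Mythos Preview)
Your overall strategy matches the paper's: factor $\tilde{\Psi}_{\rm sym}^*$ through $\fM_\N^{\rm op}$ via diagram~\eqref{diagmon}, model both sides by two-sided bar constructions over the monad $\uC_1$, and identify the induced map with the canonical one coming from an operad morphism to $\C_0$. However, two points in your sketch are imprecise and would need repair.

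First, the Fiedorowicz equivalence (see \cite[Theorem~38]{Au1}) identifies $|\hocolim_{\Delta S_+}(B_{\rm sym}M)|$ with $B(\uC_\infty,\uC_1,M)$, not with $B_*(\uC_1,\uC_1,M)$ as you write. The relevant map is therefore induced by the operad morphism $\C_\infty \to \C_0$ in the \emph{left} slot, not by ``abelianization $\uC_1 \to \uC_0$'' of the monad itself. The paper records this carefully in Lemma~\ref{maybar}, which establishes both equivalences \eqref{ombar} and \eqref{omspbar} and identifies \eqref{omis1} with $B(\uC_\infty,\uC_1,M) \to B(\uC_0,\uC_1,M)$ induced by $\C_\infty \to \C_0$.

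Second, your uniqueness argument---that there is (up to homotopy) a unique natural transformation $\Omega^\infty\Sigma^\infty \to \SP^\infty$ inducing the stable Hurewicz map, by the free-abelian-monoid universal property of $\SP^\infty$---is not justified as stated. That universal property controls maps \emph{out} of $\SP^\infty(X)$ to abelian monoids, not natural transformations \emph{into} $\SP^\infty$. The paper avoids this entirely: it constructs explicit intermediate functors $\cN \simeq \uC_\infty$ and $\cP \simeq \uC_0$ (via coends over $\Delta S_+$ and $\fM_\N^{\rm op}$ respectively), and verifies by inspection that the square relating $\Theta:\cN \to \uC_\infty$, $\Lambda:\cP \to \uC_0$, $\tilde{\Psi}_{\rm sym}^*$, and $\C_\infty \to \C_0$ commutes. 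That explicit check is the substance of the proof; your proposed shortcut would need either a correct uniqueness statement (e.g.\ via the universal property of $P_1(\id)$ in Goodwillie calculus, applied with more care) or the same direct verification.
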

To prove this proposition we will reinterpret the map \eqref{omis1} in terms of (topological) operads. The standard reference for the background material that we need is \cite{May72}
(for a brief introduction, see also \cite[Chapter 12]{R20}). 
Recall that an {\it operad $\C $} in $\Top_*$ is a collection of pointed spaces $ \{\C(j)\}_{j \ge 0} $ with $ \C(0) := \{\ast\} $ such that each $ \C(j) $ carries a right $S_j$-action and there are composition laws $ \C(k) \times \C(j_1) \times \ldots \times \C(j_k) \to \C(j_1 + \ldots +j_k) $ satisfying natural associativity and unitality conditions. If $ \C $ is an operad, a {\it $\C$-space} is a pointed space $X$ equipped with an action of $\C$, which is given by a sequence of $S_j$-equivariant maps $\,\theta_j: \C(j) \times X^{j} \to X \,$, with 
$ \theta_0: \C(0)\into X $ being the basepoint inclusion, that satisfy associativity
and unitality conditions compatible with those of $\C$. Every operad $\C $
determines a monad $\, \underline{\C} $ on $ \Top_* $ (i.e., a monoid with respect to `$\circ$' in the category of endofunctors $ \Top_* \to \Top_* $) in such a way that the notion of a $\C$-space is equivalent to that of $ \underline{\C} $-algebra. Explicitly, 
given an operad $ \C $, the corresponding monad $ \underline{\C}: \Top_* \to \Top_* $ is defined by
\begin{equation}\la{monad}
\underline{\C}(X) := \coprod_{j \ge 0}\, \left(\C(j) \times_{S_j} X^j\right)/\sim
\end{equation}
where the equivalence relation is of the form
$$ 
(c, x_1, \ldots, x_{i-1},\ast, x_{i+1}, \ldots, x_j) \sim (\sigma_i(c), x_1, \ldots, x_{i-1}, x_{i+1}, \ldots, x_j) 
$$ 
for certain natural maps $ \sigma_i:\, \C(j) \to \C(j-1) $ (see \cite[Construction 2.4]{May72}). A {\it $\underline{\C}$-algebra} is then defined to be a space $A \in \Top_* $ with an action map $ \xi: \underline{\C}(A) \to A $ satisfying natural associativity and unitality conditions. Opposite to the notion of a $ \underline{\C} $-algebra is that of a {\it $ 
\uC$-functor}, which a functor $F$ on $\Top_* $ equipped a morphism $ F \circ \underline{\C} \to F $ defining a right action of $ \underline{\C}$ on $F$. Associated to a triple $(F,\, \uC, \, A)$, there is a {\it two-sided  bar construction} $\, B(F,\, \uC, \, A)\,$ defined as the geometric realization of a simplicial space $ B_\ast(F,\, \uC, \, A) \in s\Top_*$ with components
\begin{equation}
\la{simpbar}
B_n(F,\, \uC, \, A) := F \uC^n(A)\ ,\quad n\ge0\,, 
\end{equation}
where the faces $  d_i: B_{n} \to B_{n-1} $ and degeneracies $s_j: B_n \to B_{n+1} $ are 
determined by the structure maps of $A$ and $F$ (see \cite[Construction 9.6]{May72}).

Now, our main examples  will be the so-called {\it little cubes operads} $ \{\C_0,\,\C_1,\,\C_2, \ldots\} $ originally introduced by Boardman and Vogt (see \cite[Section 4]{May72}). The $ \C_0 $ and $ \C_1 $ 
are discrete operads\footnote{These operads are denoted in \cite{May72} by 
$ \mathfrak{N}$ and $\mathfrak{M}$, respectively.} defined by $ \C_0(j) := \{\ast\} $ and $ \C_1(j) := S_j $ for all $ j\ge 0$, with $S_j$-action being trivial in the former case and  induced by multiplication in $S_j$ in the latter. A $\C_0$-space is just an abelian monoid in $\Top_*$, and the monad associated to $\C_0 $ is precisely the Dold-Thom functor: 
\begin{equation}
\la{Cosp}
\uC_0(X) \,\cong\, \SP^{\infty}(X)
\end{equation}
A $\C_1$-space is just a monoid in $\Top_*$ (i.e., an associative $H$-space with $1$), and the monad associated to $\C_1 $ yields the classical James functor:
\begin{equation}\la{Comon}
\uC_1(X) \,\cong \, J(X)  
\end{equation}
where $\,  J(X) = (\amalg_{n \ge 0} X^n)/\!\sim \,$ is the free topological monoid generated by $X$.
For $ n\ge 2$, the operad $ \C_n $ is not discrete:  for $ j \ge 1 $, the space $ \C_n(j) $  can be represented by the $j$-tuples of `little $n$-cubes' (i.e. linear embeddings $ I^n \into I^n $ with parallel axes and disjoint interiors)  with natural (permutation) $S_j$-action. Thus, for $n\ge 2$, each  $ \C_n(j) $ is homotopy equivalent to ${\rm conf}_j({\mathbb R}^n) $,  the configuration space of unordered $j$-tuples of points in $ {\mathbb R}^n $
equipped with canonical free $S_j$-action. Natural inclusions of cubes $ I^n \into I^{n+1} $ induce the embeddings of spaces $ \C_n(j) \into \C_{n+1}(j) $, and hence the maps of operads $ \C_n \into \C_{n+1} $ for all $ n\ge 2$.
This allows one to define the operad $ \C_{\infty} := \varinjlim_{n} \C_n $. Since $\pi_i[\C_n(j)] \cong \pi_i[{\rm conf}_j({\mathbb R}^n)] = 0 $ for $ i \le n-2 $, each component $ \C_\infty(j) $ of $ \C_\infty $ is contractible, and as the $S_j$-action on $ \C_\infty(j) $ (induced from $ \C_n(j) $) is free,  $\, \C_\infty \,$ is an $E_\infty$-operad. Finally, we recall  May's Approximation Theorem (see \cite[Theorem 2.7]{May72}) that asserts that the natural map of monads 
$\, 
\alpha_n:\, \uC_n(X) \to \uC_n \Omega^n \Sigma^n(X) \to \Omega^n \Sigma^n(X) 
\,$ 
gives a homotopy equivalence
\begin{equation}
\la{Mayappr}
\uC_n(X)\, \simeq \, \Omega^n \Sigma^n(X)\ ,\quad
\forall\,n = 1,2,\ldots, \infty\,  ,
\end{equation}
whenever $X$ is connected. 

We can now state the following result which is probably well known to experts.

%
\begin{lemma}[{\it cf.} \cite{F}] 
\la{maybar}
For any topological monoid $M$, there are natural homotopy equivalences 
\begin{eqnarray} 
B(\uC_\infty, \,\uC_1,\, M) & \simeq & \Omega\,\Omega^\infty \Sigma^\infty(BM)\,, \la{ombar}\\
B(\uC_0, \,\uC_1,\, M) & \simeq & \Omega\,{\rm SP}^{\infty}(BM)\,,  \la{omspbar}
\end{eqnarray}
and the map \eqref{omis1} for $X = BM $ is equivalent to the map
\begin{equation}
\la{omis2}    
B(\uC_\infty, \,\uC_1,\, M)\,\to \,B(\uC_0, \,\uC_1,\, M)
\end{equation}
induced by the canonical  $($unique$)$ morphism of operads $\, \C_\infty \to \C_0\,$.
\end{lemma}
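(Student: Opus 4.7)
The plan is to establish both equivalences \eqref{ombar} and \eqref{omspbar} in a unified way by first proving, for any operad $ \C $ equipped with a map of operads $ \C_1 \to \C $, a master equivalence
$$
B(\uC,\uC_1,M)\,\simeq\,\Omega\,\uC(BM)\,,
$$
valid whenever $ \uC(BM) $ is group-like. This hypothesis holds for both $ \C = \C_0 $ and $ \C = \C_\infty $, since $BM$ is a connected space and both $ \SP^\infty(Y) = \uC_0(Y) $ (always an abelian topological group) and $ \uC_\infty(Y) \simeq \Omega^\infty \Sigma^\infty(Y) $ (an infinite loop space for $Y$ connected) are group-like $E_\infty$-spaces. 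Granting this, \eqref{omspbar} is immediate from $ \uC_0 \cong \SP^\infty $ (see \eqref{Cosp}), while \eqref{ombar} follows by applying May's Approximation Theorem \eqref{Mayappr} to the connected space $ BM $, giving $ \uC_\infty(BM) \simeq \Omega^\infty \Sigma^\infty(BM) $.

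To prove the master equivalence, I would compare two simplicial models sharing the same spaces of $p$-simplices. On one hand, the bar construction $ B(\ast,\uC_1,M) = |B_\bullet(\ast,\uC_1,M)| $ is weakly equivalent to $BM$, the usual classifying space of the topological monoid $M$; applying the monad $ \uC $ levelwise gives a simplicial space $[p] \mapsto \uC(\uC_1^p M)$ whose realization is $ \uC(BM) $, using that $ \uC $ commutes with the geometric realization of proper simplicial spaces (\cite[Sec.~12]{May72}). On the other hand, the simplicial space $B_\bullet(\uC,\uC_1,M)$ has the same spaces $\uC(\uC_1^p M)$ at each level, but with face operators \emph{twisted} by the right $\uC_1$-action on $\uC$ arising from the operad map $ \C_1 \to \C$. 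The comparison between these two simplicial objects --- via May's monadic recognition principle for group-like $\C$-spaces (see \cite[Thm.~13.1]{May72}) --- shows that the bar-construction model sits one delooping below the free-monadic model, precisely producing the required $\Omega$ in the formula.

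Finally, the map \eqref{omis2} is induced by the canonical collapse of operads $ \C_\infty \to \C_0 $ (which exists because $ \C_0(j) = \{\ast\} $ for all $j$). Under the identifications of \eqref{ombar} and \eqref{omspbar}, the induced map on bar constructions becomes the looped canonical map $\Omega\,\Omega^\infty \Sigma^\infty(BM) \to \Omega\,\SP^\infty(BM)$, which coincides with $(\Omega s_\infty)^{-1} (\Omega i_\infty)$ from \eqref{omis1}: the naturality of May's approximation $\alpha_\infty: \uC_\infty \to \Omega^\infty \Sigma^\infty$ with respect to the operad map $\C_\infty \to \C_0$, combined with the identification $ s_\infty: \SP^\infty \xrightarrow{\sim} \Omega^\infty \SP^\infty \Sigma^\infty $ of \eqref{s_inf}, transports the operadic collapse to the topological Hurewicz-type map. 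The main obstacle will be establishing the master equivalence in paragraph two; the delicate appearance of the extra loop $\Omega$ reflects the fact that the operad map $ \C_1 \to \C $ implements derived group completion of $M$, which, via the adjunction $(\Sigma,\Omega)$ together with the classifying space construction $M \mapsto BM$, introduces exactly one loop.
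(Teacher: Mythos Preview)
Your proposed ``master equivalence'' $B(\uC,\uC_1,M)\simeq\Omega\,\uC(BM)$ is exactly the right target, but the argument you sketch for it is confused about bar constructions and does not go through. You write that $B(\ast,\uC_1,M)\simeq BM$ and then apply $\uC$ levelwise to obtain the simplicial space $[p]\mapsto\uC(\uC_1^p M)$. But if $F=\ast$ is the constant functor at a point, then $B_p(\ast,\uC_1,M)=\ast$ for every $p$, and applying $\uC$ gives nothing useful; if instead you mean $F=\id$, then $|B_\bullet(\id,\uC_1,M)|\simeq M$, not $BM$, so applying $\uC$ levelwise yields $\uC(M)$, not $\uC(BM)$. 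Either way, $BM$ does not appear where you need it, and your invocation of ``May's monadic recognition principle'' to conjure the extra $\Omega$ is not a proof.

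The paper's route supplies exactly the missing mechanism: it first rewrites the monad in the form $\uC_\infty\simeq\Omega\,(\Omega^\infty\Sigma^\infty\Sigma)$ (resp.\ $\uC_0\simeq\Omega\,(\SP^\infty\Sigma)$, using the Dold--Thom equivalence $s_1:\SP^\infty\xrightarrow{\sim}\Omega\,\SP^\infty\Sigma$), so that the suspension functor $\Sigma$ sits in the innermost slot. Then it pulls the outer loop-space functors out of the bar construction using \cite[Lemma~9.7]{May72}, leaving $B(\Sigma,\uC_1,M)$. The identification of this last object with $BM$ is a theorem of Fiedorowicz (\cite[Corollary~9.7]{F84}); this is the non-trivial input that produces $BM$ and is entirely absent from your sketch. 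With these three ingredients --- the factorization through $\Sigma$, May's Lemma~9.7, and Fiedorowicz's $B(\Sigma,\uC_1,M)\simeq BM$ --- both \eqref{ombar} and \eqref{omspbar} follow in parallel, and the compatibility with \eqref{omis1} is then a naturality check along the same chain of equivalences. Your final paragraph on the map \eqref{omis2} is fine in spirit, but it rests on the equivalences you have not established.
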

\begin{proof} The equivalence \eqref{ombar} was originally proved by Fiedorowicz (see \cite[Proposition 1.7]{F} and also \cite[Lemma 39]{Au1}); the proof of  \eqref{omspbar} is  similar. We describe these equivalences in both cases. First,
\begin{eqnarray*}
\lefteqn{B(\uC_\infty,\, \uC_1,\, M) 
\,\simeq\,  
B(\Omega^\infty \Sigma^\infty, \,\uC_1,\, M) \,\simeq }\\*[1ex]
&& B(\Omega\, \Omega^\infty \Sigma^\infty \Sigma, \,\uC_1,\, M)  \,\simeq\,
\Omega\, \Omega^\infty \Sigma^\infty 
B(\Sigma, \,\uC_1,\, M)
\,\simeq\,
\Omega\,\Omega^\infty \Sigma^\infty(BM)\, ,
\end{eqnarray*}
where the first equivalence is induced by  \eqref{Mayappr}, the second is obvious, the third is
 a formal property of the bar construction (see \cite[Lemma 9.7]{May72}), and the last one follows from a theorem of Fiedorowicz (see \cite[Corollary 9.7]{F84}) that yields $\, B(\Sigma,\,\uC_1,\,M) \simeq BM  \,$ for any topological monoid $M$. 
 Similarly,
\begin{equation*}
B(\uC_0,\, \uC_1,\, M) \,\cong\,  
B({\rm SP}^{\infty}, \,\uC_1,\, M) \,\simeq\,
B(\Omega\,{\rm SP}^{\infty}\Sigma, \,\uC_1, \,M)  \,\simeq\,
\Omega\,{\rm SP}^{\infty}B(\Sigma,\, \uC_1,\, M) \,\simeq\,
\Omega\,{\rm SP}^{\infty}(BM)\, ,
\end{equation*}
where the first identification follows from \eqref{Cosp}, the second is induced by the equivalence \eqref{s_n}, which is a consequence of the Dold-Theorem, the third follows from \cite[Lemma 9.7]{May72}, and the last one is  \cite[Corollary 9.7]{F84}. The last statement of the lemma is now deduced by comparing the above equivalences with the construction of the map \eqref{omis1} given in the beginning of Section~\ref{S5.3}. 
\end{proof}
%
%
%

\begin{proof}[Proof of Proposition \ref{srx}]
For any topological monoid $M$, consider the following diagram of spaces
 \begin{equation}\la{maindiag}
 \begin{diagram}[small]
  |{\rm hocolim}_{\Delta S_+}(B_{\rm sym}M)| & \rTo^{\ f_{\infty}\ } & B(\uC_{\infty}, \uC_1, M) & \rTo^{\eqref{ombar}} & \Omega\,\Omega^\infty \Sigma^\infty(BM) \\
  \dTo^{\tilde{\Psi}_{{\rm sym}}^*} & &  \dTo^{{\rm can}} & & \dTo_{\eqref{omis1}}  \\
  |{\rm hocolim}_{\fM_{\N}^{\rm op}}(p^\ast M)| & \rTo^{\ f_0\ } & B(\uC_{0}, \uC_1, M) & \rTo^{\eqref{omspbar}} & \Omega\,\SP^\infty (BM) 
  \end{diagram}
 \end{equation}
In this diagram all horizontal maps are natural weak equivalences:  $f_\infty$ is the equivalence
constructed by Fiedorowicz in \cite{F} (see \cite[Theorem 38]{Au1}), $\,f_0$ is the equivalence \eqref{spacemon} of Proposition~\ref{propmon}, and \eqref{ombar}, \eqref{omspbar} are the equivalences described in 
Lemma~\ref{maybar}. The map $ \tilde{\Psi}^{*}_{\rm sym}$ is induced by the functor 
$ \tilde{\Psi}_{\rm sym} $ defined in \eqref{diagmon}. To prove the proposition we need to show that 
the diagram \eqref{maindiag} commutes. By Lemma~\ref{maybar}, we already know that the rightmost square of \eqref{maindiag}  commutes; thus it suffices to prove the commutativity of the leftmost square.
For this, we shall describe the maps $ f_{\infty}$ and $ f_0 $ explicitly. 

The map $ f_{\infty}$ is explicitly constructed in the proof of \cite[Lemma 36]{Au1}. As in {\it loc. cit.}, we let $\cN:\Top_\ast \to \Top_\ast$ denote the functor defined as the coend
$$ \cN(X) := \int^{[n] \in \Delta S_+} N([n] \downarrow \Delta S_+) \times B_{\rm sym}J(X)[n]\ .$$
By \cite[Lemma~36]{Au1}, there is an equivalence of functors $\Theta:\cN \simeq \uC_\infty$, 
inducing an equivalence of bar constructions 
$B(\cN,\uC_1,M) \simeq B(\uC_\infty,\uC_1,M)$. The identification $|{\rm hocolim}_{\Delta S_+}(B_{\rm sym} M)| \simeq B(\cN,\uC_1,M) $ by \cite[Lemma 9.7]{May72} then yields $f_\infty$.

The map $f_0$ can be constructed in a similar way. Let $\cP:\Top_\ast \to \Top_\ast$ denote the functor 
$$\cP(X) := \int^{(\langle n \rangle; k_1,\ldots,k_n) \in \fM_{\N}^{\rm op}} N((\langle n \rangle; k_1,\ldots,k_n) \downarrow \fM_{\N}^{\rm op}) \times p^{\ast}J(X)(\langle n \rangle; k_1,\ldots,k_n) \ . $$
Identifying $J(X)(\langle n \rangle) = {\rm Hom}_{\rm Mon}(\langle n \rangle , J(X))$ and recalling that $\uC_0(X) = {\rm SP}^{\infty}(X)$ is the abelianization of $J(X)$, we note that the map 
$$ \coprod N((\langle n \rangle; k_1,\ldots,k_n) \downarrow \fM_{\N}^{\rm op}) \times p^{\ast}J(X)(\langle n \rangle; k_1,\ldots,k_n) \to \uC_0(X) = {\rm SP}^{\infty}(X)\,,\qquad y \times \varphi  \mapsto \varphi_{\rm ab}(k_1,\ldots, k_n) $$
descends to the coend to yield a natural equivalence 
$$ \Lambda: \cP(X) \simeq \uC_0(X)\,,$$
which, in turn, yields an equivalence of bar constructions $B(\cP,\uC_1,M) \simeq B(\uC_0,\uC_1,M)$. Composing this with the identification $|{\rm hocolim}_{\fM_{\N}^{\rm op}}(p^{\ast}M)| \simeq B(\cP,\uC_1,M)$ by \cite[Lemma9.7]{May72} yields the map $f_0$.

It can be easily verified that the following diagram commutes. 
$$ 
\begin{diagram}[small]
 \cN & \rTo^{\Theta} & \uC_{\infty}\\
  \dTo^{\tilde{\Psi}_{\rm sym}^{\ast}} & & \dTo_{{\rm can}}\\
  \cP & \rTo^{\Lambda} & \uC_0
\end{diagram}
$$
It follows that the first square in the diagram \eqref{maindiag} commutes. Finally, we note that in the case when $M=\Gamma$, a simplicial group the map $\tilde{\Psi}^{\ast}_{\rm sym}$ in the diagram \eqref{maindiag} may be identified with the corresponding map in \eqref{s4.3e10} by Corollary \ref{cor33} (also see Remark \ref{sgrsmon}). This completes the proof ofthe desired proposition.
\end{proof}

\begin{corollary}
\la{Cor22}
Let $ \Gamma $ be a $($homotopy$)$ simplicial group such that $ X = B \Gamma $ has homotopy type of a simply-connected CW complex, which is of $($locally$)$ finite rational type.
If $k$ is a field of characteristic zero, then the map $ \SR_X $ induces an isomorphism 
$$
\HS_\ast(k[\Gamma]) \cong \HR_*(k[\Gamma])\,.
$$
\end{corollary}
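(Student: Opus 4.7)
The plan is to combine Proposition~\ref{srx} with classical rational homotopy theory. By Theorem~\ref{T1} and the identification of symmetric homology in \eqref{HHH}, the map $\SR_{B\Gamma,\ast}:\HS_\ast(k[\Gamma]) \to \HR_\ast(k[\Gamma])$ is, up to these natural isomorphisms, the map induced on singular $k$-homology by the natural transformation
$$
\Omega\,\Omega^\infty\Sigma^\infty(X) \,\xrightarrow{\Omega\,i_\infty}\, \Omega\,\SP^\infty(X)
$$
of Proposition~\ref{srx}, where $X=B\Gamma$. So I only need to show that $\Omega i_\infty$ is a rational homology equivalence when $X$ is simply-connected of finite rational type.

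First I would analyze $i_\infty: \Omega^\infty\Sigma^\infty X \to \SP^\infty X$ before looping. On homotopy groups, this map realizes the stable Hurewicz homomorphism $\pi^s_n(X) \to \tilde H_n(X;\Z)$, since by Dold-Thom $\pi_n\SP^\infty(X)\cong \tilde H_n(X;\Z)$. By Serre's classical theorem, $\pi^s_\ast(X) \otimes \Q \cong \tilde H_\ast(X;\Q)$ for any simply-connected CW complex $X$, and the isomorphism is induced precisely by the stable Hurewicz map. Thus $i_\infty$ induces an isomorphism on rational homotopy groups. Moreover, when $X$ has finite rational type, both $\Omega^\infty\Sigma^\infty X$ and $\SP^\infty X$ are simply-connected (since $\pi_0^s(X) = \pi_1^s(X) = 0$ for $X$ simply-connected) and have finite-dimensional rational homotopy groups in each degree --- i.e., they are simply-connected spaces of finite rational type.

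Next I would invoke the rational Whitehead theorem: a map between simply-connected spaces of finite rational type that induces an isomorphism on $\pi_\ast(\,-\,) \otimes \Q$ automatically induces an isomorphism on $H_\ast(\,-\,;\Q)$. Hence $i_\infty$ is a rational homology equivalence. Looping preserves rational homology equivalences between simply-connected spaces (this is a standard Serre spectral sequence argument applied to the path-loop fibration), so $\Omega i_\infty$ is also a rational homology equivalence. Since $k$ has characteristic zero, $H_\ast(\,-\,;k) = H_\ast(\,-\,;\Q) \otimes_\Q k$, and the map induced by $\Omega i_\infty$ on $H_\ast(\,-\,;k)$ --- which is $\SR_{B\Gamma,\ast}$ under the identifications of \eqref{HHH} and \eqref{HR1} --- is an isomorphism, as required.

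The argument is essentially formal once Proposition~\ref{srx} is in hand; there is no serious obstacle. The only point that merits verification is the finite-type hypothesis on the two target infinite loop spaces, which reduces to the fact that the rational homotopy groups of $\Omega^\infty\Sigma^\infty X$ and $\SP^\infty X$ coincide (via Serre and Dold--Thom) with $\tilde H_\ast(X;\Q)$, and are therefore finite-dimensional in each degree whenever $X$ is. The simple-connectivity hypothesis on $X$ cannot be dropped in this approach, since Serre's theorem fails in general without it.
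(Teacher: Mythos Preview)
Your proof is correct and takes a genuinely different route from the paper's. The paper starts from the known fact (citing Cohen--Moore) that $i_\infty$ induces an isomorphism of \emph{cohomology rings} $H^*(\SP^\infty X;k)\xrightarrow{\sim}H^*(\Omega^\infty\Sigma^\infty X;k)$ in characteristic zero, and then feeds this into the Eilenberg--Moore (``Cotor'') spectral sequence $E_2^{*,*}(Z)=\Ext^*_{H^*(Z;k)}(k,k)\Rightarrow H_*(\Omega Z;k)$, concluding by the spectral sequence comparison theorem. You instead work on rational \emph{homotopy} groups, invoking Serre's theorem that the stable Hurewicz map is a rational isomorphism, then apply the rational Whitehead theorem, and finally loop using a Zeeman comparison argument on the path--loop fibration.

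Your approach is arguably more elementary: it avoids the Cotor spectral sequence entirely and uses only classical facts (Serre, rational Whitehead, Zeeman comparison). The paper's approach, on the other hand, exploits the multiplicative structure on cohomology and a quotable off-the-shelf isomorphism. A minor remark: your argument does not actually seem to require the finite-type hypothesis---the rational Whitehead theorem and the Zeeman comparison hold for arbitrary simply-connected spaces---whereas the paper's Cotor spectral sequence argument genuinely needs it for convergence. Also, after looping the spaces are only connected H-spaces (not simply-connected), so if you prefer to phrase the last step via Whitehead rather than Zeeman, you should cite the nilpotent version of the rational Whitehead theorem; your Serre spectral sequence formulation sidesteps this nicety.
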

\begin{proof}
As mentioned above, the natural map $\, i_\infty:\, \Omega^\infty \Sigma^\infty(X) \to  \SP^{\infty}(X)\,$ (defined by composing \eqref{i_inf} with the inverse of \eqref{s_inf} in $ \Ho(\Top_*) $) is not an equivalence in general. However, it is known that for any connected CW complex $X$, 
this map induces an isomorphism of cohomology rings
\begin{equation}
\la{i_inf1}
i^*_{\infty}: \ H^*(\SP^{\infty}(X),\,k)\,\xrightarrow{\sim}\,  H^*(\Omega^\infty \Sigma^\infty(X),\,k)   
\end{equation}
provided the coefficients are taken in a field $k$ of characteristic zero (see, e.g., \cite[Section 7.3]{CM95}). 
Now, under our assumption on $X$, both $ \SP^{\infty}(X) $ and $\Omega^\infty \Sigma^\infty(X)$ are simply-connected spaces of finite rational type. Hence,  there is a natural (`Cotor') spectral sequence with $E_2$-term $ E^{\ast, \ast}_2(Z) = \Ext^{*}_{H^*(Z,\,k)}(k,\,k) $ that converges to $ H_*(\Omega Z,\,k)$ for any simply-connected space $Z$ (see, e.g., \cite[Section 5.5, (5.13)]{CM95}). By naturality, the map \eqref{i_inf1} induces an isomorhism 
$\, E^{\ast, \ast}_2(\Omega^\infty \Sigma^\infty X) \xrightarrow{\sim}  E^{\ast, \ast}_2(\SP^{\infty} X)\,$ of such spectral sequences for $ Z = \Omega^\infty \Sigma^\infty(X) $ and $ Z = \SP^{\infty}(X) $. This last isomorphism is compatible with the map $\, \Omega i_{\infty}: \,H_*(\Omega\, \Omega^\infty \Sigma^\infty(X),\, k) \,\to\,H_*(\Omega\,\SP^{\infty}(X),\,k) \,$ which, by Proposition~\ref{srx}, coincides with 
$ \SR_X: \HS_*(k[\Gamma]) \to \HR_*(k[\Gamma]) $ for $ X = B\Gamma$. Thus, by Comaprison Theorem for spectral sequences (see \cite[Theorem 5.2.12]{W}), we conclude that $ \SR_X $ is an isomorphism.
\end{proof}
\begin{remark}\la{Rem5.1}
We expect that the result of Corollary~\ref{Cor22} holds for {\it any} homotopy simplicial group $\Gamma$,
including the usual (discrete) groups, for which $B\Gamma$ is a $K(1, \Gamma)$-space, i.e. certainly not simply connected.
\end{remark}

\subsection{Polynomial extensions}
\la{S5.4}
There is a natural way to describe and generalize the map $\SR $ via Goodwillie calculus. As we have seen above, the Dold-Thom functor $ \SP^\infty $ is $1$-excisive, hence there is a canonical (up to homotopy) natural transformation $\,\beta_{1}:\, P_1(\id) \to \SP^\infty $, where $ P_1(\id) = D_1(\id) $ is the 1-st layer of the functor $\id$. The latter is known to be the stable homotopy functor $ P_1(\id) \simeq \Omega^\infty \Sigma^\infty $ and $\, \beta_{1} \simeq i_{\infty} $. Thus $\, \SR \simeq \Omega \beta_{1} \,$. It turns out that the map $ \beta_1 $ can be extended naturally to higher layers --- and in fact, to the entire Goodwillie tower of the  functor $\id $.   This is based on results of the paper \cite{BD} that compares the Goodwillie tower of the identity with the lower central series of the Kan loop group.

Recall that, for any connected space $X$, we can identify
$ \SP^\infty(X) \simeq B[{\mathbb A}(X)] $, where $ {\mathbb A}(X) :=  \lgr(X)_{\rm ab} $ is the abelianization of the Kan loop group $\lgr(X)$  of (a reduced simplicial set representing) $X$, see \eqref{s4e11}. Now, instead of just abelianization, consider the lower central series of $ \lgr(X) $:
$$
\ldots\, \to \,\lgr(X)/\lgr_{n+1}(X)\,\to\, \lgr(X)/\lgr_n(X) \,\to\,\ldots \,\to\, \lgr(X)/\lgr_2(X) = {\mathbb A}(X)
$$
where $ \lgr_n(X) $ are the simplicial subgroups of $ \lgr(X) $ defined inductively by
$$ 
\lgr_1(X) := \lgr(X) \quad  \mbox{and}\quad
\lgr_{n+1}(X) := [\lgr(X), \lgr_n(X)]\,,\ n\ge 1\,.
$$
It is shown in \cite{BD} that the functor $\,X \mapsto B[\lgr(X)/\lgr_{n+1}(X)]\,$ is
$n$-excisive for each $n \ge 1$, and there exists a canonical (up to homotopy) morphism of towers
\begin{equation}\la{GoodKan}
\begin{diagram}[small]
\ldots & \rTo & P_n(\id)(X) & \rTo & P_{n-1}(\id)(X)& \rTo & \ldots &\rTo & P_1(\id)(X)\\
&  & \dTo^{\beta_n} &  & \dTo^{\beta_{n-1}} &  & \ldots & & \dTo^{\beta_1}\\
\ldots & \rTo & B[\lgr(X)/\lgr_{n+1}(X)] & \rTo & B[\lgr(X)/\lgr_{n}(X)]& \rTo & \ldots & \rTo & B[{\mathbb A}(X)]
\end{diagram}    
\end{equation}
where the rightmost vertical arrow is precisely the map $\beta_1: P_1(\id) \to \SP^\infty $. 
This morphism induces natural maps on the layers of the Goodwillie tower
\begin{equation}
\la{layern} 
\beta_n:\ D_n(\id)(X) \,\to \,B[\lgr_n(X)/\lgr_{n+1}(X)]\ ,\ n \ge 1\ .
\end{equation}
that we can describe in explicit terms. First of all, by a theorem of 
B. Johnson \cite{J95}  ({\it cf.} \cite[Example 1.2.5]{ArCh}), all Goodwillie derivatives of the identity functor are known: for $ n \ge 1 $, the spectrum $ \partial_n(\id) $ is  equivalent to 
a wedge of $ (n-1)! $ copies of the $(1-n)$-sphere spectrum $ \bS^{1-n} = 
\Sigma^{1-n}(\bS^0) $. Hence, by formula \eqref{GD1}, we have
\begin{equation}
\la{bform}
D_n(\id)(X) \,\simeq\, \Omega^\infty\biggl(\,\Mvee_{(n-1)!} \Sigma^{1-n} \,
(\Sigma^{\infty} X)^{\wedge n} \biggr)_{hS_n}     
\end{equation}
On the other hand, the Kan simplicial group $ \lgr(X) $ is (degreewise) free for any $X$. Hence, by classic PBW Theorem (see, e.g., \cite[I.4.3]{Serre}),  for all $ n \ge 1 $, there are natural isomorphisms of simplicial abelian groups
\begin{equation}
\la{Lien}
\lgr_n(X)/\lgr_{n+1}(X)\,\cong\, \Lie_n[{\mathbb A}(X)]\,,
\end{equation}
where $\, \Lie_n \,$ denotes (the simplicial extension of) the degree $n$ graded component of the free graded Lie algebra functor $ \Lie_*(A) = \Moplus_{n \ge 1} \Lie_n(A) \,$ on abelian groups $A$. Thus, with identifications \eqref{bform} and \eqref{Lien}, 
the morphism of towers \eqref{GoodKan} (looped once) induces on layers natural maps
\begin{equation}
\la{srn}
\SR^{(n)}_X:\ \Omega\, 
\Omega^\infty\biggl(\,\Mvee_{(n-1)!} \Sigma^{1-n} \,
(\Sigma^{\infty} X)^{\wedge n} \biggr)_{hS_n} \to \  |\,\Lie_n({\mathbb A}X)\,|\ ,
\quad n \ge 1\,.
\end{equation}
These maps can be viewed as nonlinear (polynomial) extensions of the map \eqref{srx}.
In fact, for $n=1$, 
$$
\SR^{(1)}_X: \ \Omega\,\Omega^{\infty}\Sigma^{\infty}(X)\,\to\,
|\mathbb A(X)| \,\simeq\, \Omega\,\SP^\infty(X) 
$$
while for $n=2$,  \eqref{srn} becomes
$$
\SR^{(2)}_X:\ 
\Omega\,\Omega^\infty \Sigma^{-1}(\Sigma^\infty X \wedge  \Sigma^\infty X)_{h\Z_2}\,\to\,
|\,\Lie_2({\mathbb A} X)\,|
$$
since the action of $\Z_2 $ on the spectrum $\, \partial_2(\id) \simeq \bS^{-1} $ is known to be trivial (see \cite[Example 1.2.5]{ArCh}). 

\section{Stable character maps and derived Poisson brackets} \la{S6}
\la{S5}
In this section, we study the behavior of the derived character maps \eqref{Tr*} in the limit as $\, n \to \infty $.
We show that, on simply connected spaces, these maps stabilize, inducing an isomorphism between the graded
symmetric algebra generated by the $S^1$-equivariant homology of the free loop space of $ X = B \Gamma $
and the invariant part of the representation homology in the projective limit $\varprojlim \HR_*(\Gamma, \GL_n)^{\GL_n} $. This result is a topological counterpart of a stabilization theorem proved for representation homology of  algebras in \cite{BR}. In  case when $X$ represents a closed manifold, so that its $S^1$-equivariant homology  carries the Chas-Sullivan bracket, we show that the
stable character map is an isomorphism of Lie algebras, where the Lie bracket on representation homology
is induced by a natural derived Poisson structure on the Quillen model of $X$.

\subsection{Stabilization of derived character maps}
\la{S5.1}
For this section, let $k$ be a field of characteristic $0$. The (homotopy) group homomorphism $\Gamma \to \{1\}$ (resp., $\{1\} \to \Gamma$) induces a morphism of cyclic modules $k[B^{\rm cyc}]\,\Gamma \to k[B^{\rm cyc}\, \{1\}] = k$ (resp., $k=k[B^{\rm cyc}\,\{1\}] \to k[B^{\rm cyc}\,\Gamma]$). In this way, the trivial cyclic module $k$ is a direct summand of $k[B^{\rm cyc}\,\Gamma]$ yielding a direct sum decomposition
$$k[B^{\rm cyc} \,\Gamma] \,\cong\, k \oplus k[\overline{B^{\rm cyc}\,\Gamma}]\ . $$
The {\it reduced} cyclic homology $\rHC_\ast(k[\Gamma])$ is defined by
$$ \rHC_\ast(k[\Gamma])\,:=\,{\rm Tor}^{\Delta C}_\ast(k[\overline{B^{\rm cyc}\,\Gamma}],k)\,, $$
so that
$$ \HC_\ast(k[\Gamma]) \,\cong\, \HC_{\ast}(k) \oplus \rHC_\ast(k[\Gamma]) \ . $$
On the other hand, the homomorphism of group schemes ${\rm GL_n} \hookrightarrow {\rm GL}_{n+1}$ (given by padding with $1$ on the bottom right corner) induces a morphism of commutative Hopf algebras $\cO({\rm GL}_{n+1}) \to \cO({\rm GL}_n)$, and hence, a morphism of left $\ffgr$-modules $\underline{\cO}({\rm GL}_{n+1}) \to \underline{\cO}({\rm GL}_n)$. This induces a morphism on representation homologies 
\begin{equation} \la{s3.5e1} \mu_{n+1,n}:\ {\rm HR}_\ast(\Gamma, {\rm GL}_{n+1}) = {\rm Tor}^{\ffgr}_\ast(k[\Gamma], \underline{\cO}({\rm GL}_{n+1})) \to {\rm HR}_{\ast}(\Gamma, {\rm GL}_{n}) = {\rm Tor}^{\ffgr}_\ast(k[\Gamma], \underline{\cO}({\rm GL}_{n}))\ .\end{equation}
It is not difficult to verify that \eqref{s3.5e1} restricts to a morphism on the invariant part of the representation homologies
\begin{equation} \la{s3.5e2} \mu_{n+1,n}\,:\, {\rm HR}_\ast(\Gamma, {\rm GL}_{n+1})^{\rm GL_{n+1}}  \to {\rm HR}_{\ast}(\Gamma, {\rm GL}_{n})^{{\rm GL}_n} \ .\end{equation}
\begin{lemma} \la{invlimit}
The following diagram commutes for all $n \,$: 
$$ 
\begin{diagram}[small]
\rHC_\ast(k[\Gamma]) & \rTo^{\Tr_{n+1}(\Gamma)\ } & {\rm HR}_\ast(\Gamma, {\rm GL}_{n+1})^{{\rm GL}_{n+1}}\\
          & \rdTo_{\Tr_n(\Gamma)}  & \dTo_{\mu_{n+1,n}}\\
          & & {\rm HR}_\ast(\Gamma, {\rm GL}_{n})^{{\rm GL}_n} 
\end{diagram}
$$
\end{lemma}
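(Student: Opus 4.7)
The plan is to exploit $k$-linearity and naturality of the derived $G$-character map construction of Definition \ref{Gchar}, and then reduce to a computation for the constant polynomial $P=1$.

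First, I would record two formal properties of $\chi_{G,P}(\Gamma)$. (i) The map $P \mapsto \chi_{G,P}(\Gamma)$ is $k$-linear in $P \in \cO(G)^G$: this follows because $\Delta_G$ from Lemma \ref{Lcocyc} is $k$-linear, and the chain-level formula \eqref{chiGP2} is manifestly $k$-linear in $P$. (ii) For any morphism $\psi: G \to G'$ of affine algebraic groups, inducing the Hopf algebra map $\psi^*: \cO(G') \to \cO(G)$ and thus a map $\mu_\psi: \HR_*(\Gamma, G') \to \HR_*(\Gamma, G)$, any invariant $P' \in \cO(G')^{G'}$ whose pullback $\psi^*(P')$ lies in $\cO(G)^G$ satisfies $\mu_\psi \circ \chi_{G', P'}(\Gamma) = \chi_{G, \psi^*(P')}(\Gamma)$. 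This naturality follows from functoriality of $B_{\rm cyc}(\mbox{--})$ in the Hopf algebra, combined with Lemma \ref{LTor}.

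Next, I would apply these properties to the block-diagonal embedding $\iota: \GL_n \hookrightarrow \GL_{n+1}$, $A \mapsto \mathrm{diag}(A,1)$. A direct matrix calculation gives $\iota^*(\Tr_{n+1}) = \Tr_n + 1 \in \cO(\GL_n)^{\GL_n}$. Since $\mu_{n+1,n}$ is by definition \eqref{s3.5e1} induced by $\iota^*$, combining (i) and (ii) yields
\begin{equation*}
\mu_{n+1,n} \circ \Tr_{n+1}(\Gamma) \,=\, \chi_{\GL_n,\,\Tr_n + 1}(\Gamma) \,=\, \Tr_n(\Gamma) \,+\, \chi_{\GL_n,\,1}(\Gamma).
\end{equation*}
Thus the lemma reduces to the vanishing of $\chi_{\GL_n,\,1}(\Gamma)$ on $\rHC_*(k[\Gamma])$.

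For this last step---the main point---I apply naturality once more to the unit $\epsilon: k = \cO(\{e\}) \hookrightarrow \cO(\GL_n)$ (arising from the unique homomorphism $\GL_n \to \{e\}$), giving a factorization $\chi_{\GL_n,\,1}(\Gamma) = \epsilon_* \circ \chi_{\{e\},\,1}(\Gamma)$. By Proposition \ref{PDAG}, $\DRep_{\{e\}}(\Gamma) \simeq \boldsymbol{\mathrm{Map}}_*(B\Gamma, B\{e\}) \simeq \mathrm{pt}$, so $\HR_*(\Gamma, \{e\}) \cong k$ is concentrated in degree $0$. Hence $\chi_{\{e\},\,1}(\Gamma)$ vanishes automatically in positive degrees; in degree $0$, the chain-level formula \eqref{chiGP2} with $\Delta_{\{e\}}^{(0)}(1) = 1$ shows that it coincides with the augmentation $k\langle\Gamma\rangle \to k$, $[g] \mapsto 1$, whose kernel is by construction $\rHC_0(k[\Gamma])$. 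The non-formal ingredient, which I view as the main obstacle, is the vanishing $\HR_i(\Gamma, \{e\}) = 0$ for $i > 0$: Definition \ref{RHDef} a priori presents this group as an unbounded Tor, and its concentration in degree $0$ ultimately rests on Proposition \ref{PDAG}.
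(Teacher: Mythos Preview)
Your proof is correct and takes a genuinely different route from the paper's. The paper reduces to a cofibrant strict simplicial group $\Gamma$ and works simplicially degree by degree, invoking the universal representation $\rho_n:\Gamma_m \to \GL_n(\cO[\Rep_n(\Gamma_m)])$ from \cite{KP}: since $\mu_{n+1,n}\circ\rho_{n+1}$ equals $\rho_n$ padded by $1$, one gets $\mu_{n+1,n}\circ\Tr_{n+1}(\langle\gamma\rangle) = \Tr_n(\langle\gamma\rangle)+1$ directly, and hence equality on the spanning elements $\langle\gamma\rangle-1$ of the reduced complex. Your argument replaces this hands-on verification by the two formal properties (linearity in $P$ and naturality in $G$), which cleanly isolates the discrepancy as $\chi_{\GL_n,1}(\Gamma)$; the paper's $\langle\gamma\rangle-1$ trick and your factorization through $\HR_\ast(\Gamma,\{e\})$ are two ways of killing this same constant term. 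Your approach has the advantage of making the role of the augmentation transparent and of applying without change to any $G\hookrightarrow G'$ with $\iota^*(P')=P+\mathrm{const}$; the paper's approach is shorter and avoids the auxiliary computation of $\HR_\ast(\Gamma,\{e\})$.

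On that last point, you overstate the difficulty. You do not need Proposition~\ref{PDAG}: after replacing $\Gamma$ by a cofibrant strict simplicial group (degreewise free, say $\Gamma_m\cong\bF_{n_m}$), the right $\ffgr$-module $k[\Gamma_m]\cong k[\Hom_{\ffgr}(-,\langle n_m\rangle)]$ is representable, hence projective, so $k[\Gamma]\otimes^{\L}_{\ffgr}\underline{k}\simeq k[\Gamma]\otimes_{\ffgr}\underline{k}$, and by co-Yoneda each $k[\Gamma_m]\otimes_{\ffgr}\underline{k}\cong\underline{k}\langle n_m\rangle=k$. Thus $\HR_\ast(\Gamma,\{e\})\cong k$ concentrated in degree $0$ by a one-line computation, and the rest of your argument goes through.
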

\begin{proof}
Since any homotopy simplicial group is weakly equivalent to a cofibrant strict simplicial group, we may assume without loss of generality that $\Gamma$ is a cofibrant strict simplicial group. Continuing to denote the map $\overline{k[\Gamma]} \otimes_{\Delta C} k \to k[\Gamma] \otimes_{\ffgr} \underline{\cO}(\GL_n)$ induced by $\Delta_{{\rm GL}_n} {\rm tr}$ by $\Tr_n(\Gamma)$, we then need to verify that the following diagram commutes
\begin{equation} \la{s3.5e3}
\begin{diagram}[small]
\overline{k[\Gamma]} \otimes_{\Delta C} k  & \rTo^{\Tr_{n+1}(\Gamma)} & k[\Gamma] \otimes_{\ffgr} \underline{\cO}({\rm GL}_{n+1})\\
          & \rdTo_{\Tr_n(\Gamma)}  & \dTo_{\mu_{n+1,n}}\\
          & &  k[\Gamma] \otimes_{\ffgr} \underline{\cO}({\rm GL}_{n})
\end{diagram}
\end{equation}
By (the proof of) \cite[Theorem 4.1]{KP}, $\,\Tr_n(\Gamma_m)$ is induced (in each simplicial degree $m$) by the composite map 
$$\begin{diagram}[small] \Gamma_m &\rTo^{\rho_n} & {\rm GL}_n(\cO[{\rm Rep}_n(\Gamma_m)]) & \rInto & \mathbb{M}_n(\cO[{\rm Rep}_n(\Gamma_m)]) & \rTo^{\Tr} & \cO[{\rm Rep}_n(\Gamma_m)] \cong k[\Gamma_m] \otimes_{\ffgr} \underline{\cO}({\rm GL_n})\,, \end{diagram}$$
where $\rho_n$ denotes the universal $n$-dimensional representation. A similar argument shows that the following diagram commutes:
\begin{diagram}[small]
\Gamma_m  & \rTo^{\rho_{n+1}\qquad} & {\rm GL}_{n+1}(\cO[{\rm Rep}_{n+1}(\Gamma_m)])\\
    \dTo^{\rho_n}      &  & \dTo_{\mu_{n+1,n}}\\
 {\rm GL}_n(\cO[{\rm Rep}_n(\Gamma_m)])         & \rInto &   {\rm GL}_{n+1}(\cO[{\rm Rep}_n(\Gamma_m)])\\
\end{diagram}
Here, the lower horizontal arrow is given by padding by `$1$' on the bottom right. It follows that 
$$\Tr_n(\Gamma_m)(\langle \gamma \rangle -1) \,=\, \mu_{n+1,n} \circ \Tr_{n+1}(\langle \gamma \rangle-1) $$
for every conjugacy class $\langle \gamma \rangle$ in $\Gamma_m$. This shows commutativity of the diagram \eqref{s3.5e3} in every simplicial degree, proving the desired lemma.
\end{proof}
By Lemma \ref{invlimit}, the family of maps $\{\Tr_n(\Gamma)\}_{n \geqslant 1}$ yields a $k$-linear map
\begin{equation} \la{s3.5e4} \Tr_\infty(\Gamma): \rHC_\ast(k[\Gamma]) \to \HR_\ast(\Gamma,{\rm GL}_\infty)^{{\rm GL}_\infty}:=\, \varprojlim_n \HR_\ast(\Gamma,{\rm GL}_n)^{\GL_n}\,, \end{equation}
where the inverse limit is taken along the maps \eqref{s3.5e2}. The map $\Tr_\infty(\Gamma)$, which we call the {\it stable character map}, induces a morphism of graded commutative $k$-algebras
\begin{equation} \la{s3.5e5} \Sym \Tr_\infty(\Gamma)\,:\,\Sym_k[\rHC_\ast(k[\Gamma])] \to \HR_\ast(\Gamma,{\rm GL}_\infty)^{{\rm GL}_\infty} \ . \end{equation}

Next, recall that a simplicial group $\Gamma$ is said to be a {\it simplicial group model} of a pointed, connected topological space $X$ if $
\Gamma$ maps to $X$ under \eqref{s3e9}, i.e. $|\bar{W}(\Gamma)|$ is weakly equivalent to $X$. In this case, it is well known that  
\begin{equation} \la{s5.2e2} \HC_\ast(k[\Gamma]) \,\cong\, {\rm H}^{S^1}_\ast(\cL X; k) \end{equation}
where $\cL X$ is the free loop space of $X$, and the representation homology $\HR_\ast(\Gamma,G)$, which is an invariant of (the homotopy type of) $X$ by Lemma \ref{RHBadM} is denoted by $\HR_\ast(X,G)$. The isomorphism \eqref{s5.2e2} restricts to an isomorphism of graded $k$-modules 
\begin{equation} \la{s5.3e3} \rHC_\ast(k[\Gamma]) \,\cong\, \overline{{\rm H}}^{S^1}_\ast(\cL X; k)\ . \end{equation}
Here, $\rH_{\ast}^{S^1}(\cL X; k)$ stands for the {\it reduced} $S^1$-equivariant homology of $\cL X$, i.e. 
$$ {\rH}^{S^1}_\ast(\cL_X; k) \,:=\,{\rm Ker}[\,\pi_\ast:{\rm H}^{S^1}_\ast(\cL X) \to \H^{S^1}_\ast({\rm pt})\,] \ .$$
The map $\pi_\ast$ is induced on $S^1$-equivariant homology by the map $\cL X \to {\rm pt}$. The derived character map $\Tr_n(X):=\Tr_n(\Gamma)$ is thus morphism of graded $k$-vector spaces
\begin{equation} \la{s5.2e4} \Tr_n(X)\,:\, \rH^{S^1}_{\ast}(\cL X;k) \to \HR_\ast(X,\GL_n)^{\GL_n}\,,\end{equation}
and the stable character map becomes
\begin{equation} \la{s5.2e5} \Tr_\infty(X)\,:\, \rH^{S^1}_{\ast}(\cL X;k) \to \HR_\ast(X,\GL_\infty)^{\GL_\infty} \ .\end{equation}
The following theorem is the main result of this section.
\begin{theorem} 
\la{SRHs}
Let $X$ be a simply connected space of finite $($rational$)$ type. The stable character map \eqref{s5.2e5} induces an isomorphism of graded commutative algebras
$$\Sym \Tr_{\infty}(X)\,:\,\Sym_k[\,\rH^{S^1}_{\ast}(\cL X;k)] \,\sto \, \HR_\ast(X,\GL_\infty)^{\GL_\infty}\ .$$
\end{theorem}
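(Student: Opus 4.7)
The plan is to reduce the statement to the corresponding theorem for differential graded algebras proved in \cite{BR}, by choosing a DGA model of $X$ through its Kan loop group. Since $X$ is simply connected of finite rational type, we may replace it by the classifying space of a cofibrant simplicial group $\Gamma$ (e.g.\ $\Gamma = \lgr(X)$), so that the group algebra $k[\Gamma]$ becomes a simplicial associative $k$-algebra. Via the (monoidal) Dold--Kan correspondence, we view $A := Nk[\Gamma]$ as a connected DG associative $k$-algebra; by a standard computation, $A$ is quasi-isomorphic to the Adams--Hilton/Quillen chain model of $\Omega X$, and its (co)homology groups are of finite type in each degree thanks to the simply-connected/finite-type hypothesis on $X$.

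Next, I would match the two sides of \eqref{sttr} with their algebraic counterparts. On the cyclic side, the isomorphism \eqref{HCiso} together with the standard identification $\HC_\ast(A)\cong \H_\ast^{S^1}(\cL X;k)$ for any DGA model of $\Omega X$ yields $\rHC_\ast(k[\Gamma])\cong \rHC_\ast(A)$. On the representation side, Proposition~\ref{PDAG} (together with \cite[Appendix]{BRYIII}) identifies $\DRep_G(\Gamma)\simeq \boldsymbol{\mathrm{Map}}_*(B\Gamma,BG)$, and this agrees, via the Koszul/bar duality relating $k[\Gamma]$ to the DGA $A$, with the derived representation scheme $\DRep_n(A)$ of \cite{BKR}. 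Hence $\HR_\ast(\Gamma,\GL_n)\cong \HR_\ast(A,\GL_n)$ as $\GL_n$-algebras, and therefore passing to invariants and to the limit $n\to\infty$ gives $\HR_\ast(\Gamma,\GL_\infty)^{\GL_\infty}\cong \HR_\ast(A,\GL_\infty)^{\GL_\infty}$.

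The crucial step is to check that, under these identifications, the stable character map $\Tr_\infty(\Gamma)_\ast$ defined in \eqref{stable} via the $\Tor^{\ffgr}$-formalism of Section~\ref{S3.4} coincides with the stable trace map of \cite{BKR, BR}, which is constructed from the universal $n$-dimensional representation of the DGA $A$. This is precisely the comparison flagged in Remark~\ref{RemAlg}; concretely, one shows that both maps are induced on homology by the composite $\langle\gamma\rangle\mapsto \Tr_n[\rho_n(\gamma)]$ followed by the stabilization $\mu_{n+1,n}$, which at the simplicial level is Lemma~\ref{invlimit}. Once this compatibility is established, Theorem~\ref{SRHsi} follows directly from the main stabilization theorem of \cite{BR}: for a connected DGA $A$ whose homology is of finite type, the symmetric algebra map
\[
\Sym_k[\,\rHC_\ast(A)\,]\ \sto\ \HR_\ast(A,\GL_\infty)^{\GL_\infty}
\]
is an isomorphism. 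The finiteness and connectivity assumptions on $X$ are exactly what is needed to apply this theorem to $A$.

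The main obstacle I anticipate is the comparison step: identifying the $\Tor^{\ffgr}$-description of $\chi_{\GL_n,\Tr_n}$ with the non-abelian derived trace of \cite{BKR} is not formal, because the two constructions live in genuinely different models (functor tensor products vs.\ derived representation functors on DG algebras). The cleanest route is to exhibit both maps as the homology of a single canonical morphism out of the universal representation of $A$ into $A\otimes\cO(\GL_n)$; this is essentially the content of \cite{BR22b} that the authors point to. Granting this comparison, and noting that for a manifold model the Poincar\'e duality pairing on $A$ induces the $(2-d)$-shifted Poisson bracket on $\HR_\ast(A,\GL_\infty)^{\GL_\infty}$ that matches the Chas--Sullivan string bracket on $\bar{\H}^{S^1}_\ast(\cL X;k)$ under the same stable trace, one obtains Corollary~\ref{manifold} as a direct consequence of Theorem~\ref{SRHsi}.
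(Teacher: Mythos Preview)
Your approach is essentially the same as the paper's: both reduce to the stabilization theorem of \cite{BR} via the comparison of derived character maps flagged in Remark~\ref{RemAlg}. Two points of difference are worth noting. First, the paper works with the Quillen Lie model $L_X$ and its universal enveloping algebra $UL_X$ rather than $Nk[\lgr(X)]$; the reason is that the argument in \cite[Section~7.4]{BR} requires a DGA that is \emph{semi-free with finitely many generators in each homological degree, all in positive degree}, not merely one whose homology is of finite type. Your model $Nk[\lgr(X)]$ is not of this form (the Kan loop group is huge degreewise), so you would need to pass explicitly to the Adams--Hilton/Quillen model you mention, which is exactly $UL_X$. Second, the paper checks separately that $\H_\ast[R^{\GL}_\infty] \cong \varprojlim_n \H_\ast[R^{\GL}_n]$ via a Mittag--Leffler argument (using that the inverse system stabilizes degreewise), since this does not fall out of \cite{BR} automatically; you should include this step as well.
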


If, moreover, $X$ is a simply connected manifold of dimension $d$ then $\rH^{S^1}_\ast(\cL X; k)$ is equipped with the {\it Chas-Sullivan bracket} (also called {\it string topology bracket}), a graded Lie bracket of (homological) degree $2-d$. This Lie bracket arises out of a derived Poisson structure (in the sense on \cite[Sec. 3.1]{BRZ}) on an algebra weakly equivalent to $k[\Gamma]$. On the other hand, the representation homologies $\HR_\ast(X,\GL_n)^{\GL_n}$ are equipped with graded, ($(2-d)$-shifted) Poisson structures arising from the Poincar\'{e} duality pairing on the cohomology of $X$. Passing to the inverse limit, one obtains a graded ($(2-d)$-shifted) Poisson structure on $\HR_\ast(X,\GL_\infty)^{\GL_\infty}$. As an application of Theorem \ref{SRHs}, we obtain the following corollary which allows us to express the Chas-Sullivan bracket in terms of a graded Poisson bracket.
\begin{corollary} \la{manifold}
The map
$$\Sym \Tr_{\infty}(X)\,:\,\Sym_k[\,\rH^{S^1}_{\ast}(\cL X;k)\,] \sto  \HR_\ast(X,\GL_\infty)^{\GL_\infty}
$$
is an isomorphism of graded $(2-d)$-shifted Poisson algebras.
\end{corollary}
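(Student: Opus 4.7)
The plan is to reduce the statement to a purely algebraic compatibility between derived Poisson structures and derived character maps, which is parallel to the main theorem of \cite{BR} for associative algebras. Throughout, let $\Gamma$ be a simplicial group model of $X$, so that $B\Gamma \simeq X$, and fix a cofibrant DG algebra $A$ (over $k$) that is weakly equivalent to the group algebra $k[\Gamma]$ in the sense of Remark~\ref{RemAlg}. First, I would invoke (the DG version of) the Lambrechts--Stanley/Chen construction to upgrade $A$ to a \emph{cyclic} DG algebra of degree $d$, with the cyclic pairing induced by Poincar\'e duality on $X$. By the standard dictionary between cyclic DG algebras and derived Poisson algebras (see \cite{BRZ}, Sec.~3), this cyclic structure is equivalent to a derived Poisson structure of degree $2-d$ on $A$.

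Next, I would check the two independent identifications of the bracket. On the cyclic homology side, it is a theorem (essentially due to Menichi and proved in full generality in the derived setting in \cite{BRZ}) that under the isomorphism $\rHC_*(A) \cong \overline{\H}^{S^1}_*(\mathcal{L}X; k)$ of \eqref{HHH}, the bracket induced by the degree-$d$ cyclic pairing coincides with the Chas--Sullivan string topology bracket. On the representation homology side, the derived Poisson structure of degree $2-d$ on $A$ induces, via the general mechanism of \cite[\S 5]{BR}, a graded $(2-d)$-shifted Poisson structure on each $\HR_*(\Gamma, \GL_n)^{\GL_n}$; these are strictly compatible with the stabilization maps $\mu_{n+1,n}$ of \eqref{s3.5e2}, so they assemble into a shifted Poisson structure on $\HR_*(X, \GL_\infty)^{\GL_\infty}$, which by construction agrees with the one coming from Poincar\'e duality.

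The third and main step is to verify that each finite-level trace $\Tr_n(\Gamma)_*$ intertwines the two brackets up to the standard sign/shift. This is the direct analogue of the Lie-algebra-morphism property of Loday--Quillen--Tsygan--Feigin--Tsygan type trace maps established for associative DG algebras in \cite[Theorem 6.1]{BR}: one works at the chain level on the cofibrant resolution $Q \xrightarrow{\sim} k[\Gamma]$ of Section~\ref{S3.4} and checks compatibility of \eqref{chiGP2} with the explicit chain-level brackets. Taking the inverse limit $n\to\infty$, the resulting $\Tr_\infty(\Gamma)_*$ is a morphism of graded Lie algebras $\rHC_*(k[\Gamma]) \to \HR_*(\Gamma, \GL_\infty)^{\GL_\infty}$ of degree $2-d$. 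Extending multiplicatively, $\Sym \Tr_\infty(\Gamma)_*$ becomes a morphism of shifted Poisson algebras, since in the target the graded Leibniz rule holds and in the source the Poisson bracket on $\Sym_k[\overline{\H}^{S^1}_*(\mathcal{L}X;k)]$ is by construction the unique one extending the Chas--Sullivan bracket to a Poisson bracket on the free symmetric algebra.

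Finally, combining this Poisson-compatibility with Theorem~\ref{SRHs}, which asserts that $\Sym \Tr_\infty(X)$ is already known to be an isomorphism of graded commutative algebras, gives the desired isomorphism of graded $(2-d)$-shifted Poisson algebras. The hardest part of the argument is the chain-level compatibility in the third step: identifying the Chas--Sullivan bracket with the derived Poisson bracket on $\rHC_*(A)$ and then verifying that the trace \eqref{chiGP} respects these brackets requires a careful translation between the simplicial model for representation homology developed in Section~\ref{S3} and the DG framework of \cite{BKR, BR}, a comparison which is addressed in the forthcoming work \cite{BR22b}.
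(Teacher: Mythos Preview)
Your proof sketch is correct and follows essentially the same route as the paper's own argument. Both proceed by replacing $k[\Gamma]$ with a cofibrant DG model carrying a cyclic structure coming from the Lambrechts--Stanley model of $X$, invoke \cite{BRZ} to identify the induced derived Poisson bracket on reduced cyclic homology with the Chas--Sullivan bracket and to equip each $\HR_*(X,\GL_n)^{\GL_n}$ with a compatible $(2-d)$-shifted Poisson structure for which $\Tr_n$ is a Lie map, pass to the inverse limit, and then conclude by combining with Theorem~\ref{SRHs}. The only minor difference is that the paper works specifically with $UL_X$ for $L_X$ a Quillen model Koszul dual to the Lambrechts--Stanley model (rather than an unspecified cofibrant $A$), and cites \cite[Theorem~5.1]{BRZ} and \cite{BCER} directly for the Lie-morphism property of $\Tr_n$ rather than \cite{BR}; but the logical skeleton is the same.
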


\subsection{Proofs of Theorem \ref{SRHs} and Corollary \ref{manifold}}
The shortest way to prove Theorem \ref{SRHs} and Corollary \ref{manifold} 
is to apply the results of the paper \cite{BR} that deals with stabilization of 
representation homology and derived character maps for (augmented) associative algebras. 
These results being applicable in our case follows from Remark \ref{RemAlg}. In what follows 
we outline key steps and necessary modifications of the arguments of \cite{BR}, leaving 
details for interested readers.


\begin{proof}[Sketch of proof of Theorem \ref{SRHs}]
Let $L_X$ denote a (cofibrant) Quillen model of $X$. Since $X$ is of finite rational type, $L_X$ may be chosen to be semi-free, and finitely generated in each homological degree. By Remark \ref{RemAlg}, if suffices to prove the assertions of this theorem working with $UL_X$ instead of $k[\Gamma]$. Further, since $X$ is simply connected, the generators of $L_X$ are in positive homological degree. Theorem \ref{SRHs} follows from (a minor modification of the proof of) \cite[Theorem 7.8]{BR}. Indeed, since $R=UL_X$ is freely generated by finitely many generators in each homological degree, and since all its generators are in positive homological degree, the arguments of \cite[Section 7.4]{BR} go through to show that for each $k>0$, the map
\begin{equation} \la{s5.2e6}\tilde{\mu}_{n+1,n}:R_{n+1}^{\GL, \leqslant k}  \to  R_{n}^{\GL, \leqslant k} \end{equation}
is an isomorphism for $n$ sufficiently large (i.e., for all $n>N(k)$ for some $N(k)$ which possibly depends on $k$). Here $R_n^{\GL}$ is the representation DG algebra as in \cite[formula (2.10)]{BR},  whose homology is isomorphic to $\HR_\ast(R,n)^{\GL_n} \cong \HR_\ast(X,\GL_n)^{\GL_n}$ and $\,R_n^{\GL,\leqslant k}$ stands for the (brutal) truncation of $R_n^{\GL}$ to homological degrees $\leqslant k$.
The map \eqref{s5.2e6} is defined as in \cite[Section 4]{BR} (where it is denoted by $\mu_{n+1,n}$). On homologies, \eqref{s5.2e6} induces the map $\mu_{n+1,n}:\HR_\ast(X,\GL_{n+1})^{\GL_{n+1}} \to \HR_\ast(X,\GL_n)^{\GL_n}$. As in the proof of \cite[Theorem 7.8]{BR} (see also Proposition 7.5 of \cite{BR}, which is the crux thereof), it then follows that the map  
$$\Sym \Tr_{\infty}(X)\,:\,\Sym_k [\rH^{S^1}_{\ast}(\cL X;k)] \to \H_\ast[R_\infty^{\GL}] $$
is an isomorphism of graded commutative algebras where $R_{\infty}^{\GL} = \varprojlim_n R_n^{\GL}$. The desired verification is thus complete once we check that $\H_{\ast}[R_{\infty}^{\GL}] \cong \varprojlim_n \H_\ast[R_n^{\GL}]$. By \eqref{s5.2e6}, the inverse system $\{R_n^{\GL}\}$ is Mittag-Leffler. \eqref{s5.2e6} further implies that for each $k$, the inverse system $\{\H_{k+1}(R_n^{\GL})\}$ stabilizes, i.e. becomes constant for large $n$, and is thus Mittag-Leffler. It follows that $\lim^1_n \H_{k+1}(R_n^{\GL})=0$. That $\H_{\ast}[R_{\infty}^{\GL}] \cong \varprojlim_n \H_\ast[R_n^{\GL}]$, as desired, then follows from \cite[Theorem 3.5.8]{W}. 
This outlines the proof of Theorem \ref{SRHs}.
\end{proof}

\begin{proof}[Sketch of proof of Corollary \ref{manifold}]
Moreover (see \cite[Section 4.2]{BRZ} for example), $L_X$ may be chosen so that its universal enveloping algebra $UL_X$ is equipped with a derived Poisson structure inducing the Chas-Sullivan bracket on its (reduced) cyclic homology (which is isomorphic to $\rH^{S^1}_\ast(\cL X;k)$). More precisely, $L_X$ may be chosen to be Koszul dual to the (graded linear dual of) the Lambrechts-Stanley model of $X$ (see \cite{LaSt}), which is equipped with a cyclic pairing. Now, if $\Gamma$ is a simplicial group model of $X$, then $k[\Gamma]$ is weakly equivalent to $UL_X$. By Remark \ref{RemAlg}, if suffices to prove the assertions of this theorem working with $UL_X$ instead of $k[\Gamma]$. In this setting, it follows immediately from \cite[Theorem 5.1]{BRZ} (also see \cite[Theorem 2]{BCER} and {\it loc. cit.}, Theorem 3.1) that the cyclic pairing on (the graded linear dual of) the Lambrechts-Stanley model of $X$ yields a graded ($(2-d)$-shifted) Poisson structure on $\HR_\ast(X,\GL_n)^{\GL_n}$ such that the derived character map $\Tr_n: \rH^{S^1}_\ast(\cL X;k) \to \HR_\ast(X,\GL_n)^{\GL_n}$ is a homomorphism of graded Lie algebras. Moreover, the maps $\mu_{n+1,n}:\HR_\ast(UL_X,n+1)^{\GL_{n+1}} \to \HR_\ast(UL_X,n)^{\GL_n}$ are easily seen to be homomorphisms of graded Poisson algebras in the setting of \cite[Section 5]{BRZ}. Hence, $\HR_\ast(X,\GL_\infty)^{\GL} \cong \HR_\ast(UL_X,\infty)^{\GL}$ acquires the structure of a graded Poisson algebra. It follows that $\Tr_\infty(X): \rH^{S^1}_{\ast}(\cL X;k) \to \HR_\ast(X,\GL_\infty)^{\GL_\infty}$ is a homomorphism of grade Lie algebras, which implies that $\Sym \Tr_\infty(X): \Sym_k[\rH^{S^1}_{\ast}(\cL X;k)] \to  \HR_\ast(X,\GL_\infty)^{\GL_\infty}$ is a homomorphism of graded Poisson algebras, where the Poisson structure in the left-hand side is obtained by extending the Chas-Sullivan bracket using the Leibniz rule.  That it is an {\it isomorphism} of graded Poisson algebras then follows from Theorem \ref{SRHs}.
\end{proof}
\bibliographystyle{plain}
\bibliography{derivedchar_bibtex}{}
\end{document}